\theoremstyle{plain}
\newtheorem{theorem}{Theorem}[section]
\newtheorem{lem}[theorem]{Lemma}
\newtheorem{prop}[theorem]{Proposition}
\theoremstyle{definition}
\newtheorem{df}[theorem]{Definition}
\newtheorem{rem}[theorem]{Remark}
\newtheorem*{claim*}{Claim}
\def\R{{\mathbb R}}
\def\N{{\mathbb N}}
\def\Z{{\mathbb Z}}
\def\d{{\mathbb D}}
\def\T{{\mathbb T}}
\def\A{{\mathcal A}}
\def\M{{\mathcal M}}
\def\B{{\mathcal B}}
\def\V{{\mathcal V}}
\def\U{{\mathcal U}}
\def\K{{\mathcal K}}
\def\D{{\mathcal D}}
\def\WY{{\mathcal WY}}
\def\eps{\varepsilon}
\renewcommand{\phi}{\varphi}
\def\Int{\mathop\mathrm{Int}}
\def\Cl{\mathop\mathrm{Cl}}
\def\length{\mathop\mathrm{length}}
\newcommand{\diam}{\operatorname{diam}}
\newcommand{\invlim}{\displaystyle\lim_{\longleftarrow}}
\renewcommand{\setminus}{\smallsetminus}
\def\nin{\notin}
\renewcommand{\hat}{\widehat}
\renewcommand{\tilde}{\widetilde}
\def\bar{\overline}
\newcommand{\vecv}{\mathbf{v}}
\newcommand{\vecu}{\mathbf{u}}
\newcommand{\vecw}{\mathbf{w}}
\newcommand{\veczero}{\mathbf{0}}
\newenvironment{proofof}{\medskip\noindent{\emph{Proof}}}{ \hfill\qed\\ }
\begin{document}
	
	\title{Densely branching trees as models for H\'enon-like and Lozi-like attractors
		\footnote{Part of the research was carried out during authors' stay at the Mathematisches Forschungsinstitut Oberwolfach in September of 2020. 
			The hospitality and tireless support of the MFO staff during the world COVID-19 pandemic is especially gratefully acknowledged.}}
	\author{J.\ Boro\'nski
		\thanks{Supported by the National Science Centre, Poland (NCN), grant no. 2019/34/E/ST1/00237: ``Topological and 
			Dynamical Properties in Parameterized Families of Non-Hyperbolic Attractors: the inverse limit approach''.}
		and
		S.\ \v{S}timac 
		\thanks{Supported in part by the Croatian Science Foundation grant IP-2018-01-7491 and by the program Excellence Initiative – Research University at the Jagiellonian University in Kraków.}}
	\date{}
	
	\maketitle
	
	\begin{abstract}
		Inspired by a recent work of Crovisier and Pujals on mildly dissipative diffeomorphisms of the plane, we show that H\'enon-like and Lozi-like maps on their strange attractors are conjugate to natural extensions (a.k.a. shift homeomorphisms on inverse limits) of maps on metric trees with dense set of branch points. In consequence, these trees very well approximate the topology of the attractors, and the maps on them give good models of the dynamics. To the best of our knowledge, these are the first examples of canonical two-parameter families of attractors in the plane for which one is guaranteed such a 1-dimensional locally connected model tying together topology and dynamics of these attractors. For the H\'enon maps this applies to a positive Lebesgue measure parameter set generalizing the Benedicks-Carleson parameters, the Wang-Young parameter set, and sheds more light onto the result of Barge from 1987, who showed that there exist parameter values for which H\'enon maps on their attractors are not natural extensions of any maps on branched 1-manifolds. For the Lozi maps the result applies to an open set of parameters given by Misiurewicz in 1980. Our result can be seen as a generalization to the non-uniformly hyperbolic world of a classical result of Williams from 1967. We also show that no simpler 1-dimensional models exist.
	\end{abstract}	
	
	{\it 2020 Mathematics Subject Classification: 54F50, 37B35, 37C15, 37B45} 
	
	{\it Key words and phrases:} inverse limit, natural extension, H\'enon family, Lozi family, strange attractor, Benedicks-Carleson parameter set, Wang-Young parameter set, Misiurewicz parameter set.
	
	\baselineskip=18pt
	
	\section{Introduction}\label{sec:intro}
	
	Parametric families of maps have been the subject of intense research for many decades, but their dynamical properties are still far from being well understood. The most fundamental example in dimension 2 is the H\'enon family 
	$$H_{a,b} : (x,y) \mapsto (1+y-ax^2, bx),$$
	with the parameter $(a,b)\in \mathbb{R}^2$, where $b\neq 0$. It was introduced by H\'enon \cite{H} in 1976. In 1991, Benedicks and Carleson 
	\cite{BK} showed that there exists a set $\mathcal{BC}$ of positive Lebesgue measure of values of the parameters $(a,b)$ such that $H_{a,b}$ 
	with $(a,b) \in \mathcal{BC}$ exhibits a strange attractor.	
	These attractors are non-uniformly hyperbolic and contain a dense orbit. In 1993, their approach was generalized to a wider class of so-called 
	H\'enon-like maps by Mora and Viana \cite{MV}. In 2001, Wang and Young in \cite{WY} gave simple conditions for dissipative maps that guarantee the existence of strange attractors. They developed a dynamical picture for the attractors in this class, including the geometry of fractal critical sets, nonuniform hyperbolic behavior, and many other properties associated with chaos. Their results hold for the H\'enon family of maps for a positive measure set of parameters $(a, b)$, arbitrarily near $(a^*, 0)$, where $a^* \in [1.5, 2]$ is such that $q_{a^*}(x) = 1 - a^*x^2$ is a Misiurewicz map. Note that these results are valid for both $b > 0$ and $b < 0$. We call this set of parameters the \emph{Wang-Young parameter set} and denote by $\WY$ the Wang-Young parameters with $b > 0$. Note that the Benedicks-Carleson results in \cite{BK} is a version of the case for $a^* = 2$ and $b > 0$.
	
	In 1978, Lozi \cite{L} introduced the family of piecewise affine maps $$L_{a,b} : (x,y) \mapsto (1 + y - a|x|, bx),$$ as a certain simplification 
	of the H\'enon family. In 1980, Misiurewicz \cite{M} proved the existence of strange attractors of the Lozi maps for a large set of parameters, and in \cite{MS2} this set is slightly extended
	to be
	$\M = \{ (a,b) \in \R^2 : b > 0, \ a\sqrt{2} - b > 2, \ 2a + b < 4 \}$. In 2018, Misiurewicz and the second author generalized that approach to a wider class of so-called Lozi-like maps \cite{MS2}, satisfying two  assumptions: $(L3)$ about transversality of the initial segments of the stable and unstable manifolds of one of the fixed points, and $(L4)$ that guarantees enough stretching in the unstable direction (see Section \ref{sec:sm} of the present paper for details). 
	
	Parametric families of maps in the plane, and especially the H\'enon and Lozi maps, have been widely studied in many different contexts and a lot of important
	results have been proved; see for example  \cite{BV}, \cite{Berger}, \cite{I},  \cite{I2}, \cite{IS}, \cite{MS},
	\cite{O}, \cite{WY}, \cite{Y}.	
	
	The H\'enon family and the Lozi family can be also seen as a natural generalization to dimension 2 of the quadratic and the tent family in dimension 1, respectively. It is well known, however, that there is no simple analogy in the study of the corresponding families of 2-dimensional maps, and a surprising number of complications and obstacles arise, which make the study in dimension 2 much more difficult. Nonetheless, in studying dissipative maps on manifolds, one often tries to reduce a given problem to a lower dimensional one, as a stepping stone to obtain results in higher dimensions. 
	
	In 2017, Crovisier and Pujals \cite{CP} showed that for any $a \in (1,2)$ and  $b \in (-\frac{1}{4},0) \cup (0,\frac{1}{4})$ the H\'enon 
	map $H_{a,b}$ is mildly dissipative\footnote{Note that in \cite{CP} the term \emph{strongly dissipative} was used (instead of the term mildly dissipative), but then in \cite{CPT} the authors changed the terminology from strongly dissupative to mildly dissipative, since it better fits the wider context of dissipative maps; e.g. the assumption $0<|b|<\frac{1}{4}$ for H\'enon maps seems rather mild.} on the surface $\d = \{ (x, y) : |x| < 1/2 + 1/a, |y| < 1/2 - a/4 \}$ and has a 1-dimensional structure in the following sense: using 1-dimensional pieces of stable manifolds of $\mu$-almost every point in the surface of dissipation, for any ergodic measure $\mu$ not supported on a hyperbolic sink, a reduced 1-dimensional dynamics is obtained as a continuous non-invertible map acting on 
	a real tree. In particular, there exists a semi-conjugacy $\pi : (\d, H_{a,b}) \to (T', h)$ to a continuous map $h$ on a compact real tree $T'$, which induces an injective map on the set of non-atomic ergodic measures $\mu$ of  $H_{a,b}$, and such that the entropies of $\mu$ and $\pi_*(\mu)$ are the same. The real tree is understood in \cite{CP} as a uniquely path connected metric space. One can see that the trees $T'$ in \cite{CP} are in fact also locally connected (cf. proof of Lemma \ref{t:sc} below), which makes them metric trees\footnote{Note that Shape Theory provides a handful of examples of uniquely path connected metric spaces that are not locally connected, and even separate the plane, whereas compact metric trees (a.k.a. dendrites) are plane non-separating.}. 
	
	In this paper, by focusing on a single stable manifold of the fixed point contained in the strange attractor of the Lozi-like map, we are able to extend Crovisier-Pujals' approach and prove that the Lozi-like map restricted to its strange attractor is in fact conjugate to the natural extension of a map on a metric tree. 
	\begin{theorem}\label{thm:conjugacy}
		Let $\Lambda$ be the strange attractor of the Lozi-like map $F : \mathbb{R}^2 \to \mathbb{R}^2$ that satisfies $(L3)$ and $(L4)$. Then there exists a metric tree $\T$ and a continuous map $f : \T \to \T$ such that $F|_{\Lambda}$ is conjugate to the shift homeomorphism $\sigma_f$ on $\invlim(\T, f)$.
	\end{theorem}
Note that the Lozi family of maps within the Misiurewicz set of parameters $\M$ is a subfamily of the Lozi-like maps that satisfy $(L3)$ and $(L4)$. Stating the results and carrying out the proofs for Lozi-like maps has the advantage of being more general and focusing on the essential properties which make the proofs work, instead of specific details of the Lozi maps.

	Note also that there is a bijection between the set of invariant probability measures of a map and the shift homeomorphism it induces on its inverse limit space \cite{Rohlin}. This means that the metric tree $\T$ with its map $f$ gives a very good model for dynamics of the Lozi-like map $F|_\Lambda$. This is one of the reasons why one would like to understand better the geometry of the tree $\T$. In that context we prove the following.
	\begin{theorem}\label{thm:dense}
		The set of branch points of $\T$ is a dense subset of $\T$.
	\end{theorem}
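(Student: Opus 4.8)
The plan is to realize the branch points of $\T$ as images under the quotient map of the ``folding points'' of the unstable manifold $W^u(p)$ of the fixed point $p\in\Lambda$, and to deduce their density from the density of $W^u(p)$ in $\Lambda$. Recall that $\T$ is obtained from $\Lambda$ by a monotone quotient map $\pi\colon\Lambda\to\T$ whose point-inverses are the connected stable slices (pieces of stable manifolds) used in the construction, and that $f\circ\pi=\pi\circ L_{a,b}$. Since $\pi$ is monotone onto the dendrite $\T$, preimages of subcontinua are subcontinua, and therefore for a stable slice $S=\pi^{-1}(t)$ the point $t$ separates $\pi(x)$ from $\pi(y)$ in $\T$ if and only if $S$ separates $x$ from $y$ in $\Lambda$; consequently $t$ is a branch point of $\T$ exactly when $\Lambda\setminus S$ has at least three connected components. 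As $\pi$ is a continuous surjection, it then suffices to produce a dense subset $Z\subseteq\Lambda$ such that for every $z\in Z$ the stable slice $S_z$ through $z$ disconnects $\Lambda$ into at least three pieces; then $\pi(Z)$ is a dense set of branch points.

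For $Z$ I would take the set of turning points of $W^u(p)$, i.e.\ the forward images $L_{a,b}^{\,n}(\zeta)$, $n\ge1$, of the points $\zeta$ where $W^u(p)$ crosses the critical line $\{x=0\}$ (at each such $\zeta$ the curve $W^u(p)$, and hence $\Lambda$, has a hairpin-shaped fold). First I would check that $Z$ is dense in $\Lambda$: density of $W^u(p)$ in $\Lambda$ together with transversality of $\{x=0\}$ to the unstable lamination of $\Lambda$ shows that $W^u(p)\cap\{x=0\}$ is dense in $\Lambda\cap\{x=0\}$, and, combined with forward invariance of $W^u(p)$ and the fact---standard for Lozi maps in the Misiurewicz regime---that $\Lambda$ is the closure of the forward orbit of its critical segment $\Lambda\cap\{x=0\}$, this gives density of $Z$ in $\Lambda$. (Alternatively, using that $\Lambda$ is the homoclinic class $H(p)$, one can instead work with the dense set of transverse homoclinic points of $p$, near each of which $W^u(p)$ develops folds; this is the version that generalizes beyond the Lozi family.)

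Next, for $z\in Z$ I would analyse $\Lambda$ near the stable slice $S_z$. Locally around $z$, $\Lambda$ looks like a Cantor family of nested hairpin folds of $W^u(p)$ with tips spread transversally to $S_z$; after collapsing stable slices this family contributes at least one arc of $\T$ issuing from $t=\pi(z)$ along which $W^u(p)$ is ``capped off'' on one side of $S_z$, so a component of $\Lambda\setminus S_z$ approaches $z$ only through this capped side. On the other hand, since $W^u(p)$ is dense in $\Lambda$ and $S_z$ is a stable slice, there is a point $z'\ne z$ at which $W^u(p)$ crosses $S_z$ transversally, and the unstable leaf through $z'$ then lies on both sides of $S_z$, contributing two further prongs at $t=\pi(z')=\pi(z)$. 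Hence $\Lambda\setminus S_z$ has at least three components, so $t$ is a branch point; together with the density of $Z$ this proves the theorem.

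The main obstacle is the verification in the previous paragraph that the ``capped'' prong at $z$ and the two transverse prongs at $z'$ really represent three distinct components of $\Lambda\setminus S_z$, rather than being joined to one another ``around the back'' of the slice $S_z$ inside the continuum $\Lambda$; this is precisely where the mild dissipation hypothesis is indispensable, through the separation properties of the stable slices in the trapping surface $\D$ established in the earlier part of the paper (after Crovisier--Pujals), which force $S_z$ to genuinely cut $\Lambda$ along the relevant prongs and, in particular, guarantee the existence of the transverse crossing $z'$. A secondary technical point is the uniform density of the turning points $Z$ in $\Lambda$; the homoclinic-class formulation mentioned above is the cleanest route to it and is also what the remark ``the density of branch points holds when the attractor is a homoclinic class'' in the introduction refers to.
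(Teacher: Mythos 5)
Your proposal rests on a description of the construction that does not match the one used here, and the gap is fatal rather than cosmetic. The tree $\T$ is obtained as a monotone quotient of the whole trapping disk $\Delta$ (respectively $D$), whose fibers are the elements of $\A$ (arcs of $W^s_X\cap\Delta$ and their ideal limits) --- it is \emph{not} a monotone quotient of $\Lambda$. The induced map $\Lambda\to\T$ has fibers of the form $\alpha\cap\Lambda$, and these are totally disconnected: every subinterval of $W^s_X$ contains points outside $\Lambda$ (this very fact is what drives the injectivity argument in the proof of Theorem \ref{thm:conjugacy}). So your ``connected stable slices'' $S=\pi^{-1}(t)\subset\Lambda$ do not exist, the monotonicity argument transferring components of $\T\setminus\{t\}$ to components of $\Lambda\setminus S$ breaks down, and the criterion ``$t$ is a branch point iff $\Lambda\setminus S$ has at least three components'' is not available. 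It is also implausible on its face: branch points of a dendrite form a countable set, whereas removing a Cantor-set slice $\alpha\cap\Lambda$ from the indecomposable continuum $\Lambda$ typically disconnects it into many pieces at essentially every fiber. The criterion actually used in the paper is about separation of the \emph{disk}: $\pi(\alpha)$ is a branch point iff $\Delta\setminus\alpha$ has more than two components, equivalently iff $\Int\alpha\cap\partial\Delta\neq\emptyset$, i.e.\ iff $\alpha$ folds at V-points of $W^s_X$ lying on the unstable boundary.

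Because of this, your candidate dense set is also the wrong one: the image under $\pi$ of a turning point of $W^u_X$ is the class of the stable arc through that point, and nothing forces that arc to touch $\partial\Delta$ in its interior, so such images need not be branch points at all. The substantive content of the paper's proof is precisely the production of branch-point fibers everywhere: one works with $E=\Delta\cap\{x=0\}$ and its preimages $L^{-n}(E)$, uses the density of V-points (folds of the \emph{stable} manifold) in these segments to find, inside every component $s^n$ of $T_n\cap\Delta$, an element $\alpha^n\in\A$ whose interior meets $\partial^u s^n$ (Lemma \ref{lem:stem}), and then uses the approximation statement of Lemma \ref{lem:gamma} --- every $\gamma\in\Gamma$ is accumulated by components $s^n$ contained in an $\eps$-neighborhood of $\gamma$ --- together with the density of $\pi(\Gamma)$ in $\T$ to conclude. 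None of this machinery appears in your outline, and the ``main obstacle'' you defer (that the three prongs are genuinely distinct components of $\Lambda\setminus S_z$) points to separation properties of stable slices inside $\Lambda$ that are established neither in Crovisier--Pujals nor in this paper; in the paper the separation count is an elementary planar fact about an arc with endpoints on $\partial\Delta$ and interior tangencies with $\partial\Delta$, and the real work is finding such arcs in every small region, not verifying the count.
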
 
	From topological point of view, the above two theorems imply that the Lozi-like attractor $\Lambda$ is in a well-defined sense very well approximated by the densely branching metric tree $\T$; i.e. 
	{\it for every $\epsilon>0$ there exists a continuous surjection $g_\epsilon:\Lambda\to\T$ such that $\diam(g_\epsilon^{-1}(t))<\epsilon$ for all $t\in\T$.} 
	Indeed, since $\Lambda=\invlim(\T, f)$ in order to obtain the map $g_\eps$ it is enough to take the projection from $\invlim(\T, f)$ onto the $i$th coordinate space, for $i>0$ large enough. 
	The density of branch points means that topologically $\T$ has a similar structure to several known fractals, such as the antenna set \cite{BiTy}, Hata’s tree-like set \cite{Kig}, and the continuum self-similar tree \cite{BM}. It also resembles Wa\.zewski universal dendrite, which is a universal object in the category of all metric trees \cite{Wazewski}. 
	
	Our approach is not restricted to the $C^0$ world but extends to the H\'enon family when parameters belong to the set of parameters $\mathcal{WY}$. Thus, for the H\'enon maps we prove the following.

	\begin{theorem}\label{cor:conjugacy}
		Let $(a,b) \in \WY$ and $\Lambda'$ be the strange attractor of the H\'enon map $H_{a,b} : \mathbb{R}^2 \to \mathbb{R}^2$. Then there exists a metric tree $\T'$ and a continuous map 
		$h : \T' \to \T'$ such that $H_{a,b}|_{\Lambda'}$ is conjugate to the natural extension $\sigma_h$ on $\invlim(\T', h)$.
	\end{theorem}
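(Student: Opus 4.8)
The plan is to reduce Theorem~\ref{cor:conjugacy} to the same machinery already developed for the Lozi case (Theorems~\ref{thm:conjugacy} and~\ref{thm:dense}), by checking that the Benedicks--Carleson parameters satisfy the same structural hypotheses that were used there. First I would recall the two ingredients that drive the Lozi argument: (i) the map $H_{a,b}$ is mildly dissipative on the trapping region $\D$, which by Crovisier--Pujals holds whenever $a\in(1,2)$ and $0<|b|<\tfrac14$, a range that contains $\BC$ after the standard normalizations; and (ii) a single stable manifold $W^s(p)$ of the (orientation-relevant) fixed point $p\in\Lambda'$ meets $\Lambda'$ in a set that does not contain an arc of $\Lambda'$, so that quotienting $\Lambda'$ by the connected components of its intersections with stable manifolds collapses no nondegenerate subcontinuum of the attractor to a point in a way that destroys the tree structure. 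For H\'enon maps in $\BC$, the unstable manifold $W^u(p)$ is dense in $\Lambda'$ (the attractor is the closure of $W^u(p)$ and contains a dense orbit), $\Lambda'$ is a homoclinic class, and the Benedicks--Carleson construction gives the required transversality and non-flatness of stable and unstable foliations; I would cite these facts from \cite{BK}, \cite{MV} and the follow-up literature.

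With those hypotheses in hand, the construction of $(\T',h)$ is then verbatim the one used for $(\T,f)$. Define $\T'$ to be the quotient of $\Lambda'$ by the equivalence relation ``$x\sim y$ iff $x$ and $y$ lie in the same connected component of $\Lambda'\cap W^s$ for some stable manifold $W^s$'', equip it with the quotient (leaf-space) metric, and let $h$ be the induced map; since $H_{a,b}$ maps stable manifolds to stable manifolds, $h$ is well defined and continuous. The arguments that $\T'$ is a metric tree (uniquely arcwise connected, locally connected, compact) and that the quotient semiconjugacy $\pi:\Lambda'\to\T'$ lifts to a conjugacy between $H_{a,b}|_{\Lambda'}$ and $\sigma_h$ on $\invlim(\T',h)$ are exactly the statements proved earlier for Lozi, once one observes that the only place the piecewise-affine (i.e. $C^0$) nature of $L_{a,b}$ entered was to guarantee the geometric properties of the stable/unstable laminations, and those are now supplied instead by the hyperbolic estimates of \cite{BK}. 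I would phrase this as: repeat the proof of Theorem~\ref{thm:conjugacy} replacing every invocation of Misiurewicz's geometry of the Lozi attractor by the corresponding Benedicks--Carleson estimate.

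The main obstacle I expect is precisely this translation of the hypotheses: the Lozi argument uses clean, combinatorial control of the stable lamination (stable manifolds are piecewise straight lines, intersections with $\Lambda$ are Cantor sets with explicit structure), whereas for H\'enon one only has the non-uniformly hyperbolic, measure-theoretic control of \cite{BK}, so one must verify (a) that through the relevant fixed point there is a stable manifold whose intersection with $\Lambda'$ contains no arc of $\Lambda'$, and (b) that the partition of $\Lambda'$ into stable components is upper semicontinuous with connected fibers, uniformly enough that the quotient is a dendrite rather than merely a uniquely arcwise connected space. Both are expected to follow from the absolute continuity and distortion estimates in \cite{BK} together with the general fact (noted in the remark before this theorem and used in the proof of Lemma~\ref{t:sc}) that the Crovisier--Pujals trees are automatically locally connected; but writing this carefully is where the work lies. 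A secondary, easier point is checking that $\BC$, after the affine change of coordinates bringing $H_{a,b}$ to the normal form used in \cite{CP}, indeed lands in the mild-dissipation window $a\in(1,2)$, $0<|b|<\tfrac14$ — this is immediate from the known location of $\BC$ (small $|b|$, $a$ near $2$) but should be stated. Finally, the analogue of Theorem~\ref{thm:dense} for $\T'$ follows because $\Lambda'$ is a homoclinic class with dense unstable manifold, giving a dense set of ``branching'' produced by folds of $W^u(p)$, exactly as in the Lozi case.
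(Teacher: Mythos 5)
Your construction of the model does not match what is actually needed, and as stated it fails. You define $\T'$ as the quotient of the attractor $\Lambda'$ by the relation ``same connected component of $\Lambda'\cap W^s$''. But for these parameters the intersections $\Lambda'\cap W^s(x)$ contain no arcs (this is precisely the property that the paper needs elsewhere to get injectivity, cf.\ the hypothesis of the corollary on mildly dissipative maps), so the connected components of $\Lambda'\cap W^s$ are single points, your equivalence relation is trivial, and your ``tree'' is just $\Lambda'$ itself --- an indecomposable, non--locally connected continuum, not a dendrite. The point you are missing is that in the Crovisier--Pujals construction, and in this paper, the tree is \emph{not} a quotient of the attractor at all: it is built from the full $2$-dimensional trapping disc $D$ (from \cite{BK}/\cite{MV}), as the space of minimal nested sequences of surfaces in $D$ bounded by finitely many arcs of the distinguished family $\Gamma'=\{\gamma:\gamma$ a connected component of $D\cap W^s_P\}$, exactly as in Lemma \ref{t:sc}. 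The attractor is then recovered as $\invlim(\T',h)$ via $\Lambda'\cong\invlim(D,H_{a,b})$, with injectivity of the induced map proved as in Theorem \ref{thm:conjugacy}; at no stage is $\Lambda'$ quotiented by stable components.

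Consequently the real work, which your proposal does not address, is to verify for $\Gamma'$ the three properties (A$'$), (B$'$), (C$'$) that drove the Lozi argument. (A$'$) and (C$'$) are formal, but (B$'$) --- every $\gamma\in\Gamma'$ is accumulated on both sides by arcs of $\Gamma'$ --- is the substantive step: the paper proves it by repeating the argument of Lemma \ref{lem:gamma} with the Mora--Viana region $\Omega$ (with $\partial\Omega\subset W^u_P\cup W^s_P$) playing the role of the triangle $T_0$, and it uses Cao's theorem \cite{C} that transversal homoclinic points of $P$ are dense in $W^u_P$, so that $\length(\partial^u\Omega_n)\to 0$ and the pulled-back stable boundaries accumulate on every $\gamma$. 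Your appeal to ``absolute continuity and distortion estimates'' and to upper semicontinuity of a stable partition of $\Lambda'$ does not substitute for this; nor does your final sentence about dense branching belong to this theorem (that is Theorem \ref{cor:dense}, proved separately via the critical points of \cite{BK} replacing the points $Y^n_q$). The one part of your plan that is in the right spirit is the observation that once the correct family $\Gamma'$ and region $\Omega$ are in place, the remaining arguments are verbatim those of Lemma \ref{t:sc} and Theorem \ref{thm:conjugacy}; that is indeed how the paper concludes.
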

	\begin{theorem}\label{cor:dense}
		The set of branch points of $\T'$ is a dense subset of $\T'$.
	\end{theorem}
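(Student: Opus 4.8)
The plan is to mirror the proof of the Lozi statement (Theorem~\ref{thm:dense}), substituting the Benedicks--Carleson estimates for the places where that proof uses the explicit piecewise-linear geometry of the Lozi attractor. Recall from (the proof of) Theorem~\ref{cor:conjugacy} that the tree $\T'$ carries a continuous surjection $\pi\colon\Lambda'\to\T'$ — the composition of the conjugacy $H_{a,b}|_{\Lambda'}\cong\sigma_h$ with the zeroth coordinate projection $\invlim(\T',h)\to\T'$ — which intertwines $H_{a,b}$ with $h$ and whose fibres are the intersections of $\Lambda'$ with stable manifolds; by the hypothesis that $\Lambda'$ shares no arc with a stable manifold these fibres are totally disconnected (in fact Cantor sets). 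Since $\pi$ is a continuous surjection, to prove Theorem~\ref{cor:dense} it suffices to exhibit a subset $D\subseteq\Lambda'$, dense in $\Lambda'$, with $\pi(z)$ a branch point of $\T'$ for every $z\in D$: then $\pi(D)$ is dense in $\T'$ and consists of branch points.

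The set $D$ is to be built, exactly as in the Lozi case, from the folding structure of the unstable manifold $W^u(p)$ of the fixed point $p$ with $\Lambda'=\overline{W^u(p)}$. A point $z$ is placed in $D$ when its stable fibre $\pi^{-1}(\pi(z))$ meets $W^u(p)$ in points lying in at least three distinct local ``unstable classes'' — for instance, at a point where $W^u(p)$ is tangent to the stable foliation together with a nearby transverse crossing — so that, after $\pi$ collapses that fibre to the single point $\pi(z)$, at least three arcs of $\T'$ issue from $\pi(z)$ with pairwise disjoint interiors, whence $\pi(z)$ is a branch point. That $D$ is dense in $\Lambda'$ is exactly where the homoclinic-class hypothesis enters: for $(a,b)\in\BC$ the attractor $\Lambda'$ is a homoclinic class — this follows from the Benedicks--Carleson construction together with the subsequent analysis of the topology of the attractor, just as Misiurewicz's work yields the corresponding fact for the Lozi attractor — so $W^u(p)$ accumulates densely on $\Lambda'$ and, combined with the transitivity of $H_{a,b}|_{\Lambda'}$, its folds are distributed densely enough relative to the stable foliation to force $\overline D=\Lambda'$.

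The main obstacle is the passage from the $C^0$ (piecewise-linear) to the $C^2$ setting in the local step. Two things must be controlled near the critical set of $H_{a,b}$, where $\Lambda'$ lacks a product structure: first, that the three arcs produced at $\pi(z)$ are genuinely distinct, i.e. not re-identified with one another by some long stable piece of $\Lambda'$ further away — a ``locality of $\pi$'', or local product, statement for $\Lambda'$ away from tangencies; and second, that the folds of $W^u(p)$ are indeed dense in the required sense. For the Lozi map both are immediate from Misiurewicz's explicit description of the stable and unstable segments of $\Lambda$ and of how they meet, whereas for the H\'enon map they have to be extracted from the Benedicks--Carleson hyperbolicity and bound-period estimates — uniform expansion along $W^u(p)$ outside bound periods, controlled recurrence to the critical set, and the Cantor geometry of the critical set itself. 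Once these are in hand the argument closes exactly as for Theorem~\ref{thm:dense}. We note that the branch points obtained in this way need not have finite order, consistently with Barge's theorem that certain H\'enon attractors are not natural extensions of maps on branched $1$-manifolds.
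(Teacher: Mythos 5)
Your overall intuition is the right one---branch points of $\T'$ should come from the folds of $W^u_P$, i.e.\ from the Benedicks--Carleson critical points, and density of branching should come from density of these folds---and this is indeed the engine of the paper's argument. But as written your proposal has a genuine gap exactly at the crux: the density step is asserted rather than proved. You state that ``the folds are distributed densely enough relative to the stable foliation'' because $\Lambda'$ is a homoclinic class and $H_{a,b}|_{\Lambda'}$ is transitive, and then you concede that both the required density of folds and the ``locality of $\pi$'' (that the three local pieces really give three distinct arcs of $\T'$ emanating from $\pi(z)$) ``have to be extracted from the Benedicks--Carleson hyperbolicity and bound-period estimates,'' without saying what statement is to be extracted or how it feeds into the branching argument. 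That deferred step \emph{is} the theorem; a proof must replace it with a concrete mechanism.

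The paper closes this gap without any new BC estimates, by transplanting the Lozi construction of Section \ref{geo} wholesale: the curve $E'$ of critical points (the $C^{1/2}$-curve of \cite{BK}, \cite{MV}), intersected with $D$ and lying in $\Omega$, plays the role of $E=\Delta\cap y$-axis; the points of $H_{a,b}^{-n}(E')\cap\partial^u D$ play the role of the points $Y^n_q$; and $D$, $\Omega$, $\Omega_n=H_{a,b}^{-n}(\Omega)$ replace $\Delta$, $T_0$, $T_n$. With properties (A'), (B'), (C') already established in Section \ref{sec:1d}, the inductive stem/level construction of Lemma \ref{lem:stem} produces, inside \emph{every} component $s^n$ of $\Omega_n\cap D$, an element $\alpha^n\in\A$ with $\Int\alpha^n\cap\partial^u s^n\neq\emptyset$, whose image is a branch point; density then follows exactly as in Theorem \ref{thm:dense}, because by the analogue of Lemma \ref{lem:gamma} any $\gamma\in\Gamma'$ is approximated within $\eps$ by such components $s^n$, and $\pi(\Gamma')$ is dense in $\T'$. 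Note also that this argument runs on the disc $D$ with the quotient map $\pi:D\to\T'$ and the explicit component count ($\pi(\alpha)$ is a branch point iff $\Int\alpha$ meets $\partial D$, i.e.\ $\alpha$ separates $D$ into more than two pieces), not on fibres over $\Lambda'$ as in your proposal; your criterion via ``three distinct local unstable classes'' of the fibre would itself need justification, which the paper's component-counting supplies for free. Incidentally, your appeal to the homoclinic-class hypothesis belongs to the general corollary for mildly dissipative diffeomorphisms, not to the H\'enon proof, where the paper instead uses density of transversal homoclinic points in $W^u_P$ (\cite{C}) together with the structure above.
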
 
	Again, this leads to the conclusion that densely branching tree $\T'$ well approximates the H\'enon attractor $\Lambda'$; i.e. {\it for every $\epsilon>0$ there exists a continuous surjection $g'_\epsilon:\Lambda\to\T'$ such that $\diam({g'}_\epsilon^{-1}(t))<\epsilon$ for all $t\in\T'$.} 
	
	We also show that our results are optimal in the following sense.
	\begin{theorem}\label{thm:LO}
		Let $\Lambda$ be the strange attractor of the Lozi-like map $F : \mathbb{R}^2 \to \mathbb{R}^2$ that satisfies $(L3)$ and $(L4)$. If $f : \T \to \T$ is a tree map such that $(\invlim (\T, f), \hat f)$ is conjugate to $(\Lambda, F)$ then $\T$ is densely branching.
	\end{theorem}
The proof of Theorem \ref{thm:LO} works verbatim for the H\'enon maps within parameter set $\WY$. 
\begin{theorem}\label{thm:HO}
	Let $(a, b) \in \WY$ and $\Lambda$ be the strange attractor of the H\'enon map $H_{a,b} : \mathbb{R}^2 \to \mathbb{R}^2$. If $f : \T \to \T$ is a tree map such that $(\invlim (\T, f), \hat f)$ is conjugate to $(\Lambda_{H_{a,b}}, H_{a,b})$ then $\T$ is densely branching.
\end{theorem}
	
	It seems surprising to us that models for the H\'enon and the Lozi maps can be obtained in such a similar way, since although the relationship between the quadratic maps and the tent maps is very strong and well known, not much about the relationship between the H\'enon and the Lozi maps has been known so far. 
	
	To the best of our knowledge, these are the first examples of canonical two-parameter families of attractors in the plane for which one is guaranteed such a 1-dimensional locally connected model tying together topology and dynamics of these attractors. Moreover, even though for the purpose of exposition we have confined our attention to the H\'enon and the Lozi-likr maps, our approach is by no means limited to these two families of maps. It will be clear that all statements generalize to the H\'enon-like maps as well. 
	
	Our results can be seen as complementing the recent advances done by Boyland, de Carvalho and Hall on natural extensions and inverse limits of parametric families of 1-dimensional maps \cite{BCH}, \cite{BCH2}, \cite{BCH3}, \cite{BCH4}, \cite{BCH5}. One can also see them as a generalization to the non-uniformly hyperbolic world of a classical result of Williams \cite{W} from 
	1967, who showed that manifold diffeomorphisms on hyperbolic attractors can be conjugated to the shift on an inverse 
	limit of branched 1-manifolds. In 1995 Barge \cite{B2} proved that for parameter values $a$ where the quadratic map $q_a$ has an attracting 
	periodic orbit and small values of $b$, the H\'enon maps $H_{a,b}$, when restricted to their attracting sets, are topologically conjugate to shift
	homeomorphisms on inverse limits on an interval with bonding map $q_a$. However, earlier in 1987 Barge \cite{B} showed that there are parameter values 
	for which Williams' result no longer holds. For example, for values of $a$ for which the critical point of the quadratic map eventually lands on 
	a repelling periodic orbit there are arbitrarily small values of $b$ for which $H_{a,b}$ has a hyperbolic periodic orbit with homoclinic tangencies.
	For such an $a$ and $b$, the dynamics on H\'enon attractors is not conjugate to a natural extension of any map on a branched 1-manifold. 
	
	The paper is organized as follows. In Section \ref{sec:sm} we recall basic facts about the Lozi-like maps and define V-points and their levels, which will later play an important role in proving dense branching of trees for the Lozi-like attractors. In Section \ref{sec:1d} we define the family $\Gamma$ of certain arcs contained in the stable manifold of the fixed point $X$ in a Lozi-like attractor, and show that this family has certain properties required in \cite{CP} to guarantee a reduction to a tree. We also show how to choose an analogous family $\Gamma'$ for the H\'enon family with parameters in $\WY$. Then we prove Theorem \ref{thm:conjugacy} and Theorem \ref{cor:conjugacy}. In Section \ref{geo} we consider the geometry of the trees from Theorem \ref{thm:conjugacy} and Theorem \ref{cor:conjugacy}, define their stems and level of stems, and show that the trees are densely branching, therefore proving Theorem \ref{thm:dense} and Theorem \ref{cor:dense}. We also show that our results are optimal by proving Theorem \ref{thm:LO}. Finally, in Section \ref{top} we observe that the H\'enon and Lozi attractors considered here are topologically indecomposable.

\section{Preliminaries on the Lozi-like maps}\label{sec:sm} 
	
In \cite{MS2}, Misiurewicz and the second author of the present paper defined and studied a family of maps generalizing the Lozi maps, which they called the Lozi-like maps. In this section, we recall the definition of the Lozi-like maps and some of their properties needed in this paper.
	
A \emph{cone} $K$ in $\R^2$ is a set given by a unit vector $\vecv$ and a number $\ell\in(0,1)$ by
\begin{equation}\label{econe1}
	K=\{\vecu\in\R^2:|\vecu\cdot\vecv|\ge \ell\|\vecu\|\},
\end{equation}
where $||\cdot||$ denotes the usual Euclidean norm. The straight line $\{t\vecv:t\in\R\}$ is the \emph{axis} of the cone. Two cones are called \emph{disjoint} if their intersection consists only of the vector $\veczero$.	A pair of cones $K^u$ and $K^s$ is called \emph{universal} if they are disjoint and the axis of $K^u$ is the $x$-axis, and the axis of $K^s$ is the $y$-axis. For an open set $U \subseteq \R^2$, a \emph{cone-field} $C$ on $U$ is the assignment of a cone $K_P$ to each point $P \in U$ such that the axis $\vecv(P)$ and the coefficient $\ell(P)$ vary continuously with $P$.

\begin{df}(\cite[Definition 2.5]{MS2})\label{df:sh}
	Let $F_1, F_2 : \R^2 \to \R^2$ be $C^1$ diffeomorphisms. We say that
	$F_1$ and $F_2$ are \emph{synchronously hyperbolic} if they are either
	both order reversing, or both order preserving, and there exist
	$\lambda > 1$, a universal pair of cones $K^u$ and $K^s$, and cone
	fields $C^u$ and $C^s$ (consisting of cones $K^u_P$ and $K^s_P$, $P
	\in \R^2$, respectively) which satisfy the following properties:
	\begin{enumerate}
		\item[(S1)] For every point $P \in \R^2$ we have $K^u_P \subset K^u$,
		$K^s_P \subset K^s$, ${DF_i}_P(K^u_P) \subset K^u_{F_i(P)}$, and
		$D{{F_i}_P^{-1}}(K^s_P) \subset K^s_{F_i^{-1}(P)}$, for $i = 1,2$.
		\item[(S2)] For every point $P \in \R^2$ and $i = 1,2$ we have
		$||DF_i(\vecu)|| \ge \lambda||\vecu||$ for every $\vecu \in K^u_P$
		and $||DF^{-1}_i(\vecw)|| \ge \lambda||\vecw||$ for every $\vecw \in
		K^s_P$.
		\item[(S3)] There exists a smooth curve $\kappa \subset \R^2$ such
		that for every $P \in \kappa$ we have $F_1(P) = F_2(P)$, the vector
		tangent to $\kappa$ at $P$ belongs to $K^s_P$, and the vector
		tangent to $F_i(\kappa)$ at $F_i(P)$ belongs to $K^u_{F_i(P)}$. We
		require that $\kappa$ is infinite in both directions.
	\end{enumerate}
	We call $\kappa$ the \emph{divider}. It divides the plane into two
	parts which we call the \emph{left half-plane} and the \emph{right
		half-plane}. Also $F_1(\kappa) = F_2(\kappa)$ divides the plane into
	two parts which we call the \emph{upper half-plane} and the
	\emph{lower half-plane}.
\end{df}

\begin{rem}\label{newrem}
	Since $F_1$ and $F_2$ are either both order reversing, or both order
	preserving, for any $P \in \R^2$, $F_1(P)$ and $F_2(P)$ belong to the
	same (upper or lower) half-plane. Without loss of generality we 
	assume that $F_i$, $i = 1,2$, maps the left half-plane onto the lower
	one and the right half-plane onto the upper one.
\end{rem}

\begin{rem}\label{rem:new}
Since the existence of the invariant cone fields implies
hyperbolicity (see~\cite[Proposition~5.4.3] {BS}), if $F_1$ and
$F_2$ are synchronously hyperbolic then both are hyperbolic (with
stable and unstable directions of dimension 1). Also, for each of
them, by (S1) and \cite[Lemma 2.2]{MS2} the stable and unstable manifolds
of any point are infinite in both directions.
\end{rem}

Recall that for a map $f$ a \emph{trapping region} is a nonempty set
that is mapped with its closure into its interior. A set $A$ is an
\emph{attractor} if it has a neighborhood $U$ which is a trapping
region, $A =\bigcap_{n=0}^\infty f^n(U)$, and $f$ restricted to $A$ is
topologically transitive.

\begin{df}(\cite[Definition 2.7]{MS2})\label{df:ll}
	Let $F_1, F_2 : \R^2 \to \R^2$ be synchronously hyperbolic $C^1$
	diffeomorphisms with the divider $\kappa$. Let $F : \R^2 \to \R^2$ be
	defined by the formula
	\[
	F(P) =
	\begin{cases}
		F_1(P), & \textrm{if $P$ is in the left  half-plane,}\\
		F_2(P), & \textrm{if $P$ is in the right half-plane.}
	\end{cases}
	\]
	We call the map $F$ \emph{Lozi-like} if the following hold:
	\begin{enumerate}
		\item[(L1)] $-1 < \det DF_i(P) < 0$ for every point $P \in \R^2$ and
		$i = 1, 2$.
		\item[(L2)] There exists a trapping region $\D$ (for the map $F$),
		which is homeomorphic to an open disk and its closure is
		homeomorphic to a closed disk.
	\end{enumerate}
\end{df}

Observe that by Remark~\ref{newrem}, $F$ is a homeomorphism of $\R^2$ onto itself. Obviously, the Lozi maps $L_{a, b}(x, y) = (1 + y - a|x|, bx)$, with $(a,b) \in \M = \{ (a,b) : b > 0, \ a\sqrt{2} > b+2, \ 2a + b < 4 \}$, provide an example of Lozi-like maps with $y$-axis as the divider. For the set $\D$ we take a neighborhood of the triangle as in~\cite{M} that usually serves as the trapping region for the Lozi map.

Let us recall some properties of a Lozi-like map $F$.

\begin{lem}\emph{(\cite[Lemma 2.8]{MS2})}\label{lem:fixed point}
	\begin{enumerate}
		\item[$(P1)$] $\D \cap \kappa \ne \emptyset$ and consequently
		$\D \cap F(\kappa) \ne \emptyset$.
		\item[$(P2)$] There exists a unique fixed point in $\D$.
	\end{enumerate}
	Let us denote this fixed point by $X$. We assume without loss of
	generality that it belongs to the right half-plane.
	\begin{enumerate}
		\item[$(P3)$] Let $\lambda_1$ and $\lambda_2$ denote the eigenvalues
		of $DF(X)$ with $|\lambda_1| > 1$ and $|\lambda_2| < 1$. Then
		$\lambda_1 < -1$ and $\lambda_2 > 0$.
		\item[$(P4)$] Let $\hat \lambda_1$ and $\hat \lambda_2$ denote the
		eigenvalues of $DF(P) = DF_1(P)$ when $P$ is in the left half-plane,
		and $|\hat \lambda_1| > 1$, $|\hat \lambda_2| < 1$. Then $\hat
		\lambda_1 > 1$ and $\hat \lambda_2 < 0$.
		\item[$(P5)$] $X \ne \kappa \cap F(\kappa)$ and consequently $X \nin
		\kappa \cup F(\kappa)$.
	\end{enumerate}
\end{lem}
Let $\Lambda := \bigcap_{n=0}^\infty F^n(\D)$. Observe that $\Lambda$ is completely invariant, that is, $\Lambda=F(\Lambda)=F^{-1}(\Lambda)$.

Note that the Lozi-like map $F$ is not everywhere differentiable, but $-1 <\det DF(P) < 0$ at every point $P$ where the derivative exists. Also, its hyperbolic structure can be
understood only as the existence of a hyperbolic splitting at those points at which it exists (for which the derivative exists at the whole trajectory). This splitting cannot be extended to a continuous one on the whole plane. The usual meaning of the notions of stable and unstable manifolds also has to be changed a little bit. They are broken curves, and 
therefore not manifolds. Nevertheless, we prefer to call them (un)stable manifolds rather than (un)stable sets. By the definition of $F$, they exist at almost all points of the trapping region and are infinite in both directions.

We will denote the unstable and stable manifold of a map $f$ at a point $P$ by $W^u_f(P)$ and $W^s_f(P)$ respectively. Also, if $A$ is an arc, or an arc-component and $P, Q \in A$, $P \ne Q$, we denote by $[P, Q] \subset A$ a unique arc of $A$ with boundary points $P$ and $Q$, and $(P, Q) = [P, Q] \setminus \{ P, Q \}$ will be also sometimes called an arc. Those sets will be usually subsets of $W^u_F(X)$ or $W^s_F(X)$, and if they are, we will denote them as $[P, Q]^u$ or $[P, Q]^s$, respectively. For a point $P = (P_x, P_y) \in \R^2$ we denote $F^n(P) = P^n = (P^n_x, P^n_y)$, $n \in \Z$, and consequently $P = P^0$. We will call the four regions of the plane given by $\kappa$ and $F(\kappa)$ the \emph{quadrants}, and their order is the usual one. 

Note first that $\kappa \cap W^s_F(X) \ne \emptyset$. Let us denote by $V$ the point where $W^s_{F_2}(X)$ intersects $\kappa$. Note that $V$ is also a point of intersection of $W^s_{F}(X)$ and $\kappa$. 

Recall that $\kappa$ and $W^u_{F_2}(X)$ intersect at exactly one point. Therefore, $F(\kappa)$ and $W^u_{F_2}(X)$ also intersect at exactly one point, denote it by $Z$. Let us consider the arc $[Z^{-1}, Z]^u \subset W^u_{F_2}(X)$, see Figure~\ref{fig.BP1}. Note that $Z^{-1} = F_2^{-1}(Z) \in \kappa$ and $X \in [Z^{-1}, Z]^u \subset W^u_F(X)$. Hence $Z$ belongs to the right half-plane and since the stretching factor $\lambda$ is larger than $1$ (see Definition~\ref{df:sh}), $Z^1$ belongs to the second quadrant. Therefore $Z^2$ lies in the lower half-plane. In order to prove that $\Lambda$ is the attractor for $F$, we restrict the possible position of $Z^2$, and increase the lower bound on the stretching factor as follows:
\begin{enumerate}
	\item[$(L3)$] $[Z^2, Z]^u \subset W^u_F(X)$ intersects $[X, V]^s \subset W^s_F(X)$,
	\item[$(L4)$] The stretching factor $\lambda$ is larger than $\sqrt2$.
\end{enumerate}
The above conditions are natural in the sense that any Lozi map $L_{a,b}$ with $(a, b) \in \M$ satisfies them.
\begin{figure}[h]
	\begin{center}
		\begin{tikzpicture}[scale=0.75]
			\tikzstyle{every node}=[draw, circle, fill=white, minimum size=1.5pt, 
			inner sep=0pt]
			\tikzstyle{dot}=[circle, fill=white, minimum size=0pt, inner sep=0pt, outer sep=-1pt]
			
			\node[dot, draw=none, label=above: \tiny $Z^{-1}$] at (0.2,2.1) {};
			\node[dot, draw=none, label=above: \tiny $V$] at (0.1,-3.4) {};
			\node[dot, draw=none, label=above: \tiny $V^1$] at (1,0.5) {};
			
			\node[label=above: \tiny $Z^3$] at (-3.73,-1.64) {};
			\node[] at (2.33,0.01) {};
			\node[label=above: \tiny $Z^1$] at (-8.04,3.21) {};
			\node[label=above: \tiny $Z$] at (6.5,-0.01) {};
			\node[label=below: \tiny $Z^2$] at (-3.29,-4.02) {};
			\node at (5.72,0) {};
			\node[] at (0.25,1.16) {};
			\node at (4.36,-0.01) {};
			\node[label=below: \tiny $Z^4$] at (-1.98,-2.1) {};
			
			\node[] at (0.37,-3.46) {};
			\node[] at (0.75,-2.35) {};
			\node[label=above: \tiny $X$] at (1.9,1) {};
			\node[] at (-0.28,1.5) {};
			\node[] at (1.55,-0.01) {};
			
			\node[draw=none, label=above: \tiny $\kappa$] at (-0.2,3) {};
			\node[draw=none, label=above: \tiny $F(\kappa)$] at (-8,-0.2) {};
			
			\node[draw=none, label=above: \small $\Delta$] at (-3.73,0.64) {};
			
			\draw[
			decoration={snake,
				amplitude = 0.1mm,
				segment length = 7mm,
				post length=0.9mm},decorate] (-3.73,-1.64) -- (2.33,0);
			\draw[
			decoration={snake,
				amplitude = 0.1mm,
				segment length = 7mm,
				post length=0.9mm},decorate] (2.33,0) -- (-8.04,3.21);
			\draw[
			decoration={snake,
				amplitude = 0.1mm,
				segment length = 7mm,
				post length=0.9mm},decorate] (-8.04,3.21) -- (6.42,0);
			\draw[
			decoration={snake,
				amplitude = 0.05mm,
				segment length = 7mm,
				post length=0.9mm},decorate] (6.42,0) -- (-3.29,-4.02);
			\draw[
			decoration={snake,
				amplitude = 0.05mm,
				segment length = 7mm,
				post length=0.9mm},decorate] (-3.29,-4.02) -- (5.72,0);
			\draw[
			decoration={snake,
				amplitude = 0.03mm,
				segment length = 6mm,
				post length=0.9mm},decorate] (5.72,0) -- (0.25,1.16);
			\draw[
			decoration={snake,
				amplitude = 0.1mm,
				segment length = 7mm,
				post length=0.9mm},decorate] (0.25,1.16) -- (4.36,-0.01);
			\draw[
			decoration={snake,
				amplitude = 0.1mm,
				segment length = 7mm,
				post length=0.9mm},decorate] (4.36,-0.01) -- (-1.98,-2.1);
			\draw[
			decoration={snake,
				amplitude = 0.2mm,
				segment length = 15mm,
				post length=0.9mm},decorate] (-8.5,0) -- (7,0);
			\draw[
			decoration={snake,
				amplitude = 0.2mm,
				segment length = 10mm,
				post length=0.9mm},decorate](-0.5,3.3) -- (0.5,-4.3);
			\draw[
			decoration={snake,
				amplitude = 0.2mm,
				segment length = 10mm,
				post length=0.9mm},decorate](1.9,1) -- (0.37,-3.46);
			
				\draw[gray,dashed](-8.04,3.21)--(-3.29,-4.02);
		\end{tikzpicture}
		\caption{Positions of some distinguished points and the ``triangle'' $\Delta$.}
		\label{fig.BP1}
	\end{center}
\end{figure}

Let us denote by $\Delta$ the ``triangle'' with vertices $Z$, $Z^1$, $Z^2$, and with edges $[Z^1, Z]^u$, $[Z^2, Z]^u$ and a straight line segment that connects $Z^1$ and $Z^2$ (see Figure~\ref{fig.BP1}). By \cite[Lemma 3.2]{MS2}, $F(\Delta) \subset \Delta$ and 
$$\Lambda = \bigcap_{n=0}^{\infty} F^n(\Delta).$$ 

\begin{prop}\emph{(\cite[Proposition 3.4]{MS2})}\label{prop:closure}
	Let a Lozi-like map $F$ satisfy $(L3)$. Then $\Lambda = \Cl
	(W^u_F(X))$.
\end{prop}

\begin{prop}\emph{(\cite[Proposition 3.7]{MS2})}\label{prop:transitivity}
	Let a Lozi-like map $F$ satisfy $(L3)$ and $(L4)$. Then $F|_{\Lambda}$
	is topologically mixing, i.e., for all open subsets $U_1$, $U_2$ of $\R^2$
	such that $U_1 \cap \Lambda \ne \emptyset$ and $U_2 \cap \Lambda \ne
	\emptyset$, there exists $N \in \N$ such that for every $n \ge N$ the
	set $F^n(U_1) \cap U_2 \cap \Lambda$ is nonempty.
\end{prop}

This proves that $\Lambda$ is the strange attractor of $F$. 

From now on we will work only with the Lozi-like maps that satisfy $(L3)$, $(L4)$, and so they have the strange attractors. For simplicity, we will denote the stable and unstable manifolds of the fixed point $X$ of the Lozi-like map $F$ by $W^s_X$ and $W^u_X$, respectively.
	
	Let us consider the structure of the stable manifold $W^s_X$. Let $\phi : \mathbb{R} \to W^s_X$ with $\phi(0) = X$ be its parametrization. One half of $W^s_X$, say $\phi((-\infty,0])$,  starts at $X$ and goes to the infinity in the first quadrant. The other half $\phi([0,\infty))$ intersects the vertical axis for the first time at the point $V$. Note that $F^{-1}$ maps the upper half-plane onto the right one and the lower half-plane onto the left one. 
	
	We call a point $P \in W^s_X$ a {\it V-point} if there exists a $k \in \N_0$ such that $P^k \in \kappa$. We call a point $P$ a {\it  basic V-point} if $P \in \kappa$ and there is no $n \in \N$ such that $P^n \in \kappa$. In this case $W^s_X$ intersects $\kappa$ at $P$ transversely, that is, for every neighborhood $U$ of $P$, the arc of $U \cap W^s_X$ that contains $P$ intersects both, the left and the right half-planes. We say that a V-point $P$ has {\it level} $k+1$ if $P^k$ is a basic V-point. In other words a V-point $P$ has level $k+1$ if it is $k$th preimage of a basic V-point. 
	
	Note that if $P$ is a basic V-point, its image $P^1$ is not a V-point, $P^1$ belongs to $F(\kappa)$ and $W^s_X$ intersects $F(\kappa)$ at $P^1$ transversely. On the other hand, all preimages of V-points are V-points and we say that $W^s_X$ ``makes a turn'', or ``has a V-shape'', at every V-point. If $P^{-n} \in \kappa$ for a basic V-point $P$, then   there exists a small neighborhood of $P^{-n}$ such that the arc of $U \cap W^s_X$ that contains $P^{-n}$ lies either in the right, or in the left half-plane (with the boundary $\kappa$). In this case we say that the V-point $P^{-n}$ intersects $\kappa$ tangentially. Let us denote the set of all V-points by $\V$.
	
	We can think of $W^s_X$ 
	in two ways. In the first one, $W^s_X$
	is a subset of the plane. In the second one, we 
	``straighten'' it out and consider it as
	the real line. Note that the topology in
	$W^s_X$ is different in both cases. When 
	we use the second way, there is a natural order $\prec$ on $W^s_X$
	given by
	$$Q\prec Q'\textrm{ if and only if }\phi^{-1}(Q)<\phi^{-1}(Q').$$
	In particular, $X \prec V$.
	\begin{lem}\label{l:discrete}
		The set $\phi^{-1}(\V)$ is discrete.
	\end{lem}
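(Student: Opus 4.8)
The plan is to combine two facts about the stable manifold. On the one hand, unravelling the definition, a point $P\in W^s_X$ is a V-point exactly when $L^j(P)$ lies on the $y$-axis for some $j\ge 0$; since $W^s_X$ is $L$-invariant this gives
$$\V\;=\;\bigcup_{j\ge 0}L^{-j}\bigl(W^s_X\cap\{\,y\text{-axis}\,\}\bigr).$$
On the other hand $L$ contracts $W^s_X$ towards $X$: if $\psi:=\phi^{-1}\circ L\circ\phi$ denotes the induced homeomorphism of $\R$, which fixes $0$, then $\psi^n(t)\to 0$ for every $t$, uniformly on compact intervals (a compact sub-arc of $W^s_X$ enters the local stable manifold of $X$, on which $L$ is a contraction, after finitely many steps). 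First I would fix $\eps>0$ so small that $\ell:=\phi([-\eps,\eps])$ lies in the open first quadrant and inside the neighbourhood of $X$ on which $L$ is affine; then $\ell$ is a single line segment, disjoint from the $y$-axis.

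Next I would show that $\phi^{-1}(\V)$ meets each interval $[-R,R]$ in a finite set, which gives discreteness. Given $R>0$, choose $N$ with $\psi^n([-R,R])\subseteq[-\eps,\eps]$ for all $n\ge N$. If $t\in[-R,R]$ and $\phi(t)\in\V$, then $L^j(\phi(t))=\phi(\psi^j(t))$ lies on the $y$-axis for some $j$; this is impossible for $j\ge N$, since then $\phi(\psi^j(t))\in\ell$, which avoids the $y$-axis. Hence
$$[-R,R]\cap\phi^{-1}(\V)\;\subseteq\;\bigcup_{j=0}^{N-1}\psi^{-j}\bigl(\phi^{-1}(\{\,y\text{-axis}\,\})\bigr)\cap[-R,R],$$
where $\phi^{-1}(\{\,y\text{-axis}\,\}):=\{t\in\R:\phi(t)\text{ lies on the }y\text{-axis}\}$. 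Each term on the right is $\psi^{-j}$ applied to $\phi^{-1}(\{\,y\text{-axis}\,\})\cap\psi^j([-R,R])$, so it is finite once one knows that $\phi^{-1}(\{\,y\text{-axis}\,\})$ meets every compact interval in a finite set; thus it remains to establish the latter. For this I would use that any compact sub-arc $\phi([-R,R])$ of $W^s_X$ lies, for $k$ large, in $L^{-k}(\ell)$ (because $[-R,R]\subseteq\psi^{-k}([-\eps,\eps])$ for $k$ large), and that $L^{-k}(\ell)$ is a finite union of line segments — this follows by induction on $k$ from the fact that $L^{-1}(x,y)=(\tfrac{y}{b},\tfrac{a}{b}|y|+x-1)$ is affine on each of the half-planes $\{y\ge 0\}$ and $\{y\le 0\}$ and hence sends a segment to a union of at most two segments. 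If in addition none of these segments lies on the $y$-axis, then $L^{-k}(\ell)$, and hence $\phi([-R,R])$, meets the $y$-axis in finitely many points, and the claim follows.

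The hard part — and the only step using the specific geometry of Lozi maps — is precisely the assertion that no non-degenerate sub-arc of $W^s_X$ lies on the $y$-axis (equivalently, none lies on the $x$-axis, since $L$ and $L^{-1}$ interchange the two cases). This rests on the piecewise-affine structure of $L$ together with the transversality built into the invariant manifolds, as recalled in the preliminaries above and established in \cite{M} and \cite{MS2}: at every point where $W^s_X$ meets the $y$-axis it either crosses transversely or has a one-sided (``tangential'') contact, so that $W^s_X$ genuinely turns there rather than running along the axis. Granting this, the counting above shows that $\phi^{-1}(\V)\cap[-R,R]$ is finite for every $R$, so $\phi^{-1}(\V)$ is discrete (in fact closed and discrete) in $\R$.
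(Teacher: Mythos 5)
Your proposal is sound in structure and rests on the same underlying mechanism as the paper's argument -- the piecewise affine nature of $L^{-1}$, which can create only finitely many new corners per pullback, combined with the fact that $W^s_X$ is exhausted by preimages of a straight piece near $X$ -- but the bookkeeping is genuinely different. The paper argues with consecutive V-points: $V^0$ and $V^{-1}$ are consecutive, and between the preimages of two consecutive V-points at most one new V-point can appear (because between consecutive V-points $W^s_X$ is a straight segment, hence crosses the $x$-axis at most once), so by induction each arc $L^{-i}([V^0,V^{-1}))$ contains finitely many points of $\V$. You instead push a compact parameter interval forward into the local stable segment $\ell$ and pull $\ell$ back, bounding the number of segments of $L^{-k}(\ell)$; this cleanly separates the two ingredients (uniform contraction on compact sub-arcs, and finiteness of axis crossings of a polygonal arc) and in particular makes explicit the finite-in-compacts statement, which is slightly stronger than bare discreteness.

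The one soft spot is the step you yourself flag: that no nondegenerate sub-arc of $W^s_X$ lies on the $y$-axis. You defer this to the ``transversal or tangential'' description of the intersections, but in the paper that description is itself only recalled, not proved, so as written this is an assertion rather than an argument; note, however, that the paper's own proof silently uses facts of exactly the same nature (that $V^0,V^{-1}$ are consecutive, and the ``at most one new V-point'' step), so your level of rigor is comparable, and you locate the needed geometric input more honestly. If you want to close it, a short direction argument suffices: a vertical segment is mapped by $L$ to a horizontal one, and thereafter its direction stays in a forward-invariant cone of slopes of absolute value at most $\tfrac{1}{2}\bigl(a-\sqrt{a^2-4b}\bigr)<1$ for $(a,b)\in\M$, whereas any sub-arc of $W^s_X$ has forward images that eventually lie in the local stable manifold, whose direction has slope $a+\lambda_s>1$; so such a segment cannot be contained in $W^s_X$. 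With that supplied, your proof is complete.
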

	\begin{proof}
		Note that $V = V^0$ and $V^{-1}$ are two consecutive V-points. Note also that $F^{-1}|_{W^s_X}$ is expanding and $W^s_X = R \cup (\bigcup_{i=0}^{\infty}F^{-i}([X, V^0]))$, where $R = \phi((-\infty,0])$. If $P$ and $Q$ are two consecutive points of $\V$, then between $P^{-1}$ and $Q^{-1}$ there is at most one point of $\V$. Therefore, by induction we see that in $F^{-i}([V^0,V^{-1}])$ there are only finitely many points of $\V$. This proves that $\phi^{-1}(\V)$ is discrete.
	\end{proof}
	
	\begin{lem}\label{lem:dense}
		$W^s_X$ is dense in $\D$.
	\end{lem}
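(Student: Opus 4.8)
The plan is to reduce the density of $W^s_X$ in $\B$ to a statement about forward iterates of a single small arc, and then to prove that statement using the uniform expansion of $L$ along unstable cones together with the topological mixing of $L|_\Lambda$.

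First the reduction. Since $L$ is a homeomorphism of the plane and $L^n(P)\to X$ implies $L^n(L^{\pm1}(P))\to X$, the stable manifold $W^s_X=\{P:L^n(P)\to X\}$ is totally invariant, so $L^{-n}(W^s_X)=W^s_X$ for every $n$; in particular $L^{-n}(W^s_{\mathrm{loc}}(X))\subseteq W^s_X$ for all $n$, where $W^s_{\mathrm{loc}}(X)$ denotes the local stable manifold of $X$. Hence, to prove that $W^s_X$ is dense in $\B$ it suffices to show that for every nonempty open $U\subseteq\B$ there is $n\ge 0$ with $L^n(U)\cap W^s_{\mathrm{loc}}(X)\ne\emptyset$: any $q\in U$ with $L^n(q)\in W^s_{\mathrm{loc}}(X)$ then lies in $U\cap W^s_X$.

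Fix such a $U$ and pick inside it a nondegenerate arc $\gamma$ lying in the unstable cone field of $L$ and missing the $y$-axis (possible since $U$ is open and lying in the unstable cone is an open condition — a short horizontal segment will do). I would then invoke the hyperbolic structure of the Lozi map from \cite{M}: the $DL$-invariant, uniformly expanded unstable cone field; the uniform transversality, with a definite angle, of the stable and unstable cone fields; and the fact that every point of $\Lambda$ carries a local unstable manifold of length bounded below. Combining this with the facts recalled above — that $U\subseteq\B$ is attracted to $\Lambda$, that $\Lambda=\Cl(W^u_X)$, and that $L|_\Lambda$ is topologically mixing — one gets that $L^n(\gamma)$ is a broken line whose edges lie in the unstable cone field and whose total length tends to infinity, that $L^n(\gamma)$ converges to $\Lambda$, and hence that the forward $L$-images of $\gamma$ contain smooth subarcs of length bounded below which become dense in $\Lambda$ and, by mixing, pass arbitrarily close to every point of $\Lambda$. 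In particular there is an $n$ for which $L^n(\gamma)$ contains a subarc $\gamma'$, lying in the unstable cone field, of length bounded below, and passing within an arbitrarily small distance of $X$. Since $W^s_{\mathrm{loc}}(X)$ is an arc through $X$ lying in the stable cone field and of definite length on both sides of $X$, while $\gamma'$ is a long arc in the uniformly transversal unstable cone field passing very close to $X$, the two arcs must cross. Thus $L^n(U)\supseteq\gamma'$ meets $W^s_{\mathrm{loc}}(X)$, and by the reduction above $W^s_X$ is dense in $\B$.

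The step I expect to be the main obstacle is the claim that forward iterates of the unstable arc $\gamma$ come arbitrarily close to $X$ while retaining a definite length. Two points need care: (i) $L^n(\gamma)$ is only a broken line — each crossing of the $y$-axis introduces a corner — so one must use from \cite{M} that such crossings are sufficiently sparse relative to the expansion rate that arbitrarily long smooth subarcs persist; and (ii) one must upgrade ``accumulation somewhere on the unstable lamination of $\Lambda$'' to ``accumulation near the particular point $X$'', which is exactly where topological mixing of $L|_\Lambda$ (equivalently, $\Lambda=\Cl(W^u_X)$ together with transitivity) is used. Both are standard consequences of Misiurewicz's analysis; indeed the same expansion-and-recurrence mechanism underlies the proof that $L|_\Lambda$ is topologically mixing. (Alternatively, one can first run this argument with $X$ replaced by an arbitrary point of $\Lambda$ to conclude that $W^s_X\cap\Lambda$ is dense in $\Lambda$, and then transport density off $\Lambda$ along the continuous stable lamination and pull back by $L$ as above; this requires the same expansion input, so it is not genuinely simpler.)
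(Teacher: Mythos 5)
Your opening reduction (total invariance of $W^s_X$ plus pushing an open set forward and pulling back) is exactly the skeleton of the paper's proof, but the engine you propose for the key step has a genuine gap. The crucial claim --- that the forward images $L^n(\gamma)$ of an arbitrary unstable-cone arc $\gamma\subset U$ contain smooth subarcs of definite length passing arbitrarily close to $X$ --- is not justified by the tools you invoke. Topological mixing of $L|_{\Lambda}$ is a statement about (relatively open subsets of) $\Lambda$ and says nothing directly about $\gamma$, which need not meet $\Lambda$ at all; and the assertion that long smooth pieces survive the cutting at the $y$-axis and become dense in $\Lambda$ is not a quotable statement of \cite{M} for arbitrary arcs in the basin --- what \cite{M} offers in this direction concerns segments of $W^u_X$ (Theorem 5) and the density of $W^s_X$ in a neighborhood of $\Lambda$ (Theorem 6). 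So the heart of your argument is exactly the part you defer to ``standard consequences of Misiurewicz's analysis,'' and filling it in would amount to redoing a substantial portion of that analysis (growth-versus-cutting estimates for unstable arcs in the basin, plus a recurrence argument localizing them near $X$). There is also a smaller geometric slip: an unstable arc of definite length whose closest point to $X$ lies within $\eps$ of $X$ need not cross $W^s_{\mathrm{loc}}(X)$, since it may extend from that point away from the stable segment, so even the transversality step needs a more careful formulation.

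The paper avoids all of this by using what \cite{M} actually provides: by Theorem 6 of \cite{M} there is an open set $U$ with $\Lambda\subset U\subseteq\B$ in which $W^s_X$ is dense. Given any open $U'\subset\B$, some iterate $L^n(U')$ is an \emph{open} subset of $U$ (basin property plus the fact that $L$ is a homeomorphism), hence it meets the dense set $W^s_X$, and pulling back by invariance gives $U'\cap W^s_X\neq\emptyset$. In other words, you do not need to hit the specific arc $W^s_{\mathrm{loc}}(X)$ with a long unstable arc --- hitting the dense set $W^s_X$ with an open set is immediate --- and it is insisting on the former that forces the unproved expansion/mixing machinery into your argument.
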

	\begin{proof}
		The proof is analogous to the proof of \cite[Theorem 6]{M}, but we provide it here for completeness.
		
		By Remark \ref{rem:new}, at almost all points $P \in \D$ there exist local stable and unstable manifolds and broken global stable and unstable manifolds, infinite in both directions. By \cite[Lemma 3.5]{MS2}, for every arc $I \subset \D$ contained in some unstable manifold, there exist $n \in \N_0$ and an arc $I' \subset F^n(I)$ such that $I'$ intersects both $\kappa$ and $F(\kappa)$. By \cite[Remark 3.6]{MS2}, each arc contained in $F(\Delta)$ and intersecting both $\kappa$ and $F(\kappa)$ also intersect $[X, V]^s$. Therefore, the set of homoclinic points of $X$ is dense in $\Lambda$ and there exists an open set $U$, $\Lambda \subset U \subseteq \D$ such that $U \cap W^s_X$ is dense in $U$. Let us suppose now by contradiction that there exists an open set $U' \subset \D$ such that $U' \cap W^s_X = \emptyset$. Since $F(\D) \subset \D$, there exists an $n \in \N$ such that $F^n(U') \subset U$. Therefore, $F^n(U') \cap W^s_X \ne \emptyset$. Since $W^s_X$ is invariant for $F$, $\emptyset \ne F^{-n}(F^n(U') \cap W^s_X) \subset W^s_X$ contradicting the assumption that $U' \cap W^s_X = \emptyset$.
	\end{proof}
	
	\begin{rem}\label{rem:dense}
		Note that the set of all basic V-points belongs to, and is dense in the arc $\kappa \cap \D$. Inductively, the set of V-points of level $k+1$ belongs to, and is dense in the broken arc $F^{-k}(\kappa) \cap \D$. Also, all points of intersections of $W^s_X$ and $F^{-k}(\kappa) \cap \D$ are V-points of level greater than or equal $k+1$. If a level is greater than $k+1$, then $W^s_X$ intersects $F^{-k}(\kappa) \cap \D$ in that point tangentially. 
	\end{rem}
	
	\section{Reduction to a locally connected model}\label{sec:1d}
	
	In this section we will prove Theorems \ref{thm:conjugacy} and \ref{cor:conjugacy}. First, we focus on the Lozi-like maps. Recall, for any point $P$, we let $F^n(P) = P^n$ and consequently $P = P^0$. Also, given a set $D$, we denote the boundary of $D$ by $\partial D$, and the closure of $D$ by $\Cl D$.
	\subsection{Building conjugacy on the Lozi attractors}\label{sec:lm}
	We are interested in the family of arcs $$\Gamma =\{\gamma:\gamma \textrm{ is a connected component of }\Delta \cap W^s_X\}.$$ 
	Let $W = [X, V]^s \cap [Z^2, Z]^u$. Obviously $[X,W]^s \in \Gamma$. Since the set of homoclinic points
	$W^s_X \cap W^u_X$ is countable and dense in $W^u_X$, $\Gamma$ is countably infinite and for every $\gamma \in \Gamma$ 
	the both endpoints of $\gamma$ are contained in $\partial \Delta$. Therefore,
	every $\gamma$ divides $\Delta$ into 
	finitely many (connected) components. Let
	us denote the number of components for
	$\gamma$ by $c(\gamma)$. For simplicity,
	we shall use the term \emph{components for}
	$\gamma$, and that will always mean
	the components of $\Delta\setminus\{\gamma\}$. If $c(\gamma) = 2$, then 
	$\Int \gamma$ (in the topology of the real line, i.e.\ $\Int\gamma=\varphi(\Int\varphi^{-1}(\gamma))$)
	does not intersect $\partial \Delta$. If
	$c(\gamma) > 2$, then 
	$\Int\gamma \cap \partial \Delta$
	consists of $c(\gamma) - 2$ V-points. 
	
	In the next lemma we will prove that every
	$\gamma \in \Gamma$ is the limit of arcs
	in $\Gamma$ and is accumulated on both
	sides.
	
	\begin{lem}\label{lem:gamma}
		Let $\gamma \in \Gamma$. 
		\begin{enumerate}[(1)]
			\item If $c(\gamma) = 2$, then in every
			component for $\gamma$ there exists a
			sequence $(\gamma_i)_{i \in \N}\subset\Gamma$ which 
			converges to $\gamma$ in the Hausdorff
			metric.
			\item Let $c(\gamma) = n > 2$, 
			$\gamma \cap \partial \Delta = 
			\{ P_1, \dots , P_n \}$ and
			$P_1 \prec \dots \prec P_n$. 
			Every arc $[P_k, P_{k+2}] \subset \gamma$,
			$k = 1, \dots , n-2$, divides $\Delta$
			into three components and only one of them
			contains the whole arc $[P_k, P_{k+2}]$ in
			its boundary. In that component there
			exists a sequence  $(\gamma_i)_{i \in \N}\subset\Gamma$
			which converges to the arc $[P_k, P_{k+2}]$
			in the Hausdorff metric. The arc 
			$[P_1, P_2] \subset \gamma$ divides 
			$\Delta$ into two components and one 
			of them doesn't intersect $\gamma$. In that
			component there exists a sequence 
			$(\gamma'_i)_{i \in \N}\subset\Gamma$ which converges to
			the arc $[P_1, P_2]$ in the Hausdorff
			metric. An analogous statement
			holds for the arc 
			$[P_{n-1}, P_n] \subset \gamma$.
		\end{enumerate}
	\end{lem}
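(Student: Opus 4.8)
The plan is to use two facts established earlier: that $W^s_X$ is dense in $\B$ (Lemma \ref{lem:dense}), together with $\Lambda=\Cl W^u_X$ and the fact that $L$ is topologically mixing on $\Lambda$, so that $W^u_X$ is dense in $\Lambda$. The intuition is that each $\gamma\in\Gamma$ lies in $W^s_X$, and other pieces of $W^s_X$ must accumulate on it from both sides because $W^s_X$ is dense; but one has to be careful that the accumulating pieces are again honest connected components of $\Delta\cap W^s_X$ reaching all the way across to $\partial\Delta$, not just short local arcs. So first I would fix $\gamma\in\Gamma$ and an endpoint $P\in\partial\Delta$ of $\gamma$, and work in one of the components $C$ for $\gamma$ (a component of $\Delta\setminus\{\gamma\}$). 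Choose a small transversal segment $\tau$ to $W^s_X$ at an interior point of $\gamma$, with one side of $\tau$ poking into $C$. On that side, density of $W^s_X$ in $\B\supset\Delta$ gives points of $W^s_X$ arbitrarily close to $\gamma$; taking the connected component of $\Delta\cap W^s_X$ through such a point yields an arc $\gamma_i\in\Gamma$, and as the base points approach $\gamma$ these arcs must, along a subsequence, Hausdorff-converge to a limit continuum contained in $\Cl(\Delta\cap W^s_X)\cap C$... which, by local product structure / the fact that distinct leaves of $W^s_X$ through nearby points either coincide or stay separated, forces the limit to contain $\gamma$ (or the prescribed subarc).

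The key geometric input for getting the \emph{whole} arc in the limit is the structure of $W^s_X$ recalled in Section \ref{sec:sm}: $W^s_X$ is the increasing union $R\cup\bigcup_i L^{-i}([X,V^0])$ of broken lines, $L^{-1}|_{W^s_X}$ is expanding, and the V-points (the places where $W^s_X$ meets the coordinate axes and turns) form a discrete set in the intrinsic parametrization (Lemma \ref{l:discrete}). Consequently a component $\gamma\in\Gamma$ has only finitely many V-points in its interior, hence $c(\gamma)<\infty$, and near each such arc $W^s_X$ behaves like finitely many parallel strands crossing $\Delta$. So for part (1), when $c(\gamma)=2$, $\Int\gamma$ misses $\partial\Delta$, each of the two components for $\gamma$ abuts $\gamma$ along its full length, and the accumulating sequence $\gamma_i$ — obtained by pushing nearby leaves of $W^s_X$ through $\Delta$ — converges in the Hausdorff metric to all of $\gamma$ because there is no mechanism (no V-point) by which a nearby leaf inside $C$ could peel away from $\gamma$ before reaching $\partial\Delta$. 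For part (2), when $c(\gamma)=n>2$ with $P_1\prec\cdots\prec P_n$ the V-points and endpoints of $\gamma$ on $\partial\Delta$, the subarc $[P_k,P_{k+2}]$ contains exactly one V-point in its interior, namely $P_{k+1}$; this ``$\Lambda$-shaped'' subarc together with a piece of $\partial\Delta$ bounds three regions, and exactly the region ``inside the V'' has all of $[P_k,P_{k+2}]$ on its boundary. Inside that region I would again select nearby leaves of $W^s_X$; because the turn at $P_{k+1}$ is a transversal intersection with a coordinate axis, nearby leaves make a parallel turn, so their components in $\Delta$ Hausdorff-converge to the full two-segment arc $[P_k,P_{k+2}]$. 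The boundary cases $[P_1,P_2]$ and $[P_{n-1},P_n]$ are the extreme subarcs with no interior V-point; each cuts $\Delta$ into two pieces, and on the side not containing the rest of $\gamma$ the same density argument produces $\gamma'_i\to[P_1,P_2]$ (resp.\ $[P_{n-1},P_n]$).

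The main obstacle I anticipate is \emph{ruling out premature detachment}: a priori a nearby component of $\Delta\cap W^s_X$ could run alongside $\gamma$ for a while and then veer off (or terminate on $\partial\Delta$) before tracking the entire arc, so that the Hausdorff limit is only a proper subarc of $\gamma$. Handling this cleanly is where the discreteness of V-points (Lemma \ref{l:discrete}) and the explicit broken-line / expansion description of $W^s_X$ do the real work: between the finitely many turning points of $\gamma$, an $\eps$-neighborhood of a segment of $\gamma$ meets $W^s_X$ in finitely many almost-parallel strands, each of which is a sub-segment of some $L^{-i}([X,V^0])$, and these strands cross $\Delta$ in lockstep with $\gamma$; a compactness argument over the finitely many segments of $\gamma$ then upgrades local tracking to tracking of the whole arc. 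A secondary technical point is verifying that the limit arc one extracts is genuinely a \emph{component} of $\Delta\cap W^s_X$ (closed, maximal), which follows since $\Delta$ is closed and $\gamma$ was chosen to be such a component, so no longer leaf of $W^s_X$ can contain $\gamma$ properly within $\Delta$. Once these points are in place, the Hausdorff convergence and the two-sided accumulation assertions are immediate.
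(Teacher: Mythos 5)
Your plan correctly isolates the crux (ruling out ``premature detachment''), but the argument you offer to close it does not work, and this is exactly the point where the proof must do real work. First, the claim that ``an $\eps$-neighborhood of a segment of $\gamma$ meets $W^s_X$ in finitely many almost-parallel strands'' is false: by Lemma \ref{lem:dense} (and Misiurewicz's theorem quoted there) $W^s_X\cap\Delta$ is dense in $\Delta$, so every neighborhood of any subarc of $\gamma$ meets infinitely many components of $\Delta\cap W^s_X$. Lemma \ref{l:discrete} only says that the V-points are discrete in the \emph{intrinsic} parametrization $\phi$ of the single curve $W^s_X$; it gives no finiteness, and no ``parallelism'', in a planar neighborhood. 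In particular nearby components do carry V-points (of high level, dense along the polygonal lines $L^{-k}([M^{-1},N])$, cf.\ Remark \ref{rem:dense}) arbitrarily close to $\gamma$, so they genuinely can shadow part of $\gamma$ and then turn away --- this is precisely why part (2) of the lemma is stated subarc-by-subarc rather than for all of $\gamma$. Second, there is no ``local product structure'' or ``leaves either coincide or stay separated'' available: all your arcs are components of the intersection of the \emph{same} broken line $W^s_X$ with $\Delta$, and they accumulate on one another by density. Relatedly, a Hausdorff limit of components of $\Delta\cap W^s_X$ need not be contained in $W^s_X$ at all (the closure of $W^s_X$ contains all of $\Delta$), so extracting a convergent subsequence through base points near $\gamma$ does not by itself force the limit to be $\gamma$, nor even an arc; your sketch never excludes limits that are proper subarcs, larger continua, or two-dimensional sets.

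The paper closes this gap dynamically rather than by a density-plus-compactness argument. It introduces the triangle $T_0$ with $\partial^s T_0=[X,V^0]\cup[V^0,A^0]\subset W^s_X$ and $\partial^u T_0=[X,A^0]\subset W^u_X$, and the polygons $T_n=L^{-n}(T_0)$, whose stable boundaries are increasing polygonal arcs of $W^s_X$ and whose unstable boundaries satisfy $\length(\partial^u T_n)\to 0$ because $A^{-n}\to X$ along $W^u_X$. Given $\gamma$, one has $\gamma\subset\partial^s T_n$ for large $n$, and one looks at the maximal open surface $s^n$ of $T_n\cap\Delta$ with $\gamma$ in its boundary: its stable boundary automatically consists of \emph{whole} elements of $\Gamma$ (this is what your approach lacks --- the approximating arcs come for free as full components), the surfaces are nested along even steps, and density of $W^s_X$ together with the shrinking unstable boundary forces $\partial^s s^n=\gamma\cup\gamma^n$ with $\gamma^n\subset U_\eps(\gamma)$ eventually; the alternation of $A^{-n}$ around $X$ then yields approximating arcs on \emph{both} sides of $\gamma$. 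Part (2) is proved by running the same argument inside the distinguished component $C^k$ for the subarc $[P_k,P_{k+2}]$. If you want to salvage your outline, you would need to replace the ``finitely many parallel strands'' and ``no peeling off'' steps by some such dynamical control; as written, these steps are unsupported and the proof is incomplete.
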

	\begin{proof}
		Let $T_0$ denote the triangle with
		vertices $X$, $V^0$ and 
		$A^0 := L^{-1}(W)$. Note that $A^0 = [V^0, V^{-1}] \cap [X, Z^1]$, where
		$[V^0, V^{-1}] \subset W^s_X$, 
		$[X, Z^1] \subset W^u_X$. The edges of $T_0$ are
		$[X, V^0], [V^0, A^0] \subset W^s_X$
		and $[X, A^0] \subset W^u_X$ (see Figure \ref{fig:lozi2}, left). Denote by
		$\partial^s T_0 := [X, V^0] \cup [V^0, A^0]$ and $\partial^u T_0 := [X, A^0]$. 
		
		Let us consider the polygons 
		$T_n := L^{-n}(T_0)$, $n \in \N_0$, as well as 
		their stable and unstable boundaries
		$\partial^s T_n := L^{-n}(\partial^s T_0)$ and $\partial^u T_n := L^{-n}(\partial^u T_0)$ respectively. Then 
		$\partial^s T_n$ is a polygonal line in $W^s_X$, $\partial^s T_{n-1} \subset \partial^s T_n$ since $X \in \partial^s T_n$ for every $n \in \N$, and 
		$\partial^u T_n$ is a segment in $W^u_X$. 
		Also, $\length (\partial^u T_n) \to 0$
		when $n \to \infty$, since 
		$A^0 \in W^u_X$ and
		$A^{-n} \to X$ when $n \to \infty$. 
		
			\begin{figure}[ht]
			
			\begin{adjustbox}{center}
				\begin{tabular}{cc}
					\hspace{-0.5cm}
					\resizebox{0.4\textwidth}{!}{
						
						\begin{tikzpicture}[scale=1]
							\tikzstyle{every node}=[draw, circle, fill=white, minimum size=1.5pt, inner sep=0pt]
							\tikzstyle{dot}=[circle, fill=white, minimum size=0pt, inner sep=0pt, outer sep=-1pt]
							\node (n1) at (2*4/3,0) {};
							\node (n2) at (-2*4/3,2*2/3)  {};
							\node (n3) at (-2*2/3,-2*2/3)  {};
							\node (n4) at (2*4/9,2*2/9) {};
							\node (n5) at (0.62,0.51)  {};
							\node (n10) at (0,-2*4/3)  {};
							\node (n11) at (0.55,-0.7)  {};
							\node (n12) at (-4/3,1)  {};
							
							\node (n01) at (0,2*1/3)  {};
							\node (n02) at (1.6,0.27)  {};
							\node (n03) at (-1.7,2)  {};
							
							\tikzstyle{every node}=[draw, circle, fill=white, minimum size=0.1pt, inner sep=0pt]
							\node (n6) at (-3,0) {};
							\node (n7) at (3,0)  {};
							\node (n8) at (0,2)  {};
							\node (n9) at (0,-5.3)  {};
							
							\foreach \from/\to in {n1/n2, n1/n3, n6/n7, n8/n9}
							\draw[decoration={snake,
								amplitude = 0.1mm,
								segment length = 7mm,
								post length=0.9mm},decorate] (\from) -- (\to);
							
							\draw[gray,dashed](n2)--(n3);
							
							\node[dot, draw=none, label=above: \tiny $Z$] at (2*4/3,0) {};
							\node[dot, draw=none, label=below: \tiny $Y^0$] at (0,2*1/3) {};
							\node[dot, draw=none, label=below: \tiny $A^0$] at (-1,0.9) {};
							\node[dot, draw=none, label=below: \tiny $Z^1$] at (-2*4/3,2*2/3) {};
							\node[dot, draw=none, label=above: \tiny $Z^2$] at (-2*2/3,-2*2/3) {};
							\node[dot, draw=none, label=below: \tiny $X$] at (2*4/9,2*2/9) {};
							\node[dot, draw=none, label=above: \tiny $V^0$] at (0.2,-2*4/3) {};
							\node[dot, draw=none, label=below: \tiny $A^{-1}$] at (1.5,0.26) {};
							\node[dot, draw=none, label=below: \tiny $A^{-2}$] at (n5) {};
							\node[dot, draw=none, label=below: \tiny $V^{-1}$] at (-1.3,1.9) {};
							\node[dot, draw=none, label=above: \tiny $W$] at (0.7,-0.79) {};
							
							\draw[red,thick,decoration={snake,
								amplitude = 0.1mm,
								segment length = 7mm,
								post length=0.9mm},decorate](n4)--(n10);
							\draw[red,thick,decoration={snake,
								amplitude = 0.1mm,
								segment length = 7mm,
								post length=0.9mm},decorate](n10)--(n12);
							
							\draw[Purple,thick,decoration={snake,
								amplitude = 0.1mm,
								segment length = 7mm,
								post length=0.9mm},decorate](n12)--(-1.7,2);
							\draw[Purple,thick,decoration={snake,
								amplitude = 0.1mm,
								segment length = 7mm,
								post length=0.9mm},decorate](-1.7,2)--(0,-4);
							\draw[Purple,thick,decoration={snake,
								amplitude = 0.1mm,
								segment length = 7mm,
								post length=0.9mm},decorate](0,-4)--(1.6,0.26);
							
							\draw[orange,thick,decoration={snake,
								amplitude = 0.1mm,
								segment length = 7mm,
								post length=0.9mm},decorate](1.6,0.26)--(2.45,2.5);
							\draw[orange,thick,decoration={snake,
								amplitude = 0.1mm,
								segment length = 7mm,
								post length=0.9mm},decorate](2.45,2.5)--(0,-4.5);
							\draw[orange,thick,decoration={snake,
								amplitude = 0.1mm,
								segment length = 7mm,
								post length=0.9mm},decorate](0,-4.5)--(-1.96, 2.8);
							\draw[orange,thick,decoration={snake,
								amplitude = 0.1mm,
								segment length = 7mm,
								post length=0.9mm},decorate](-1.96,2.8)--(0,-2.2);
							\draw[orange,thick,decoration={snake,
								amplitude = 0.1mm,
								segment length = 7mm,
								post length=0.9mm},decorate](0,-2.2)--(0.62,0.5);
							
							\draw[Maroon,thick,decoration={snake,
								amplitude = 0.05mm,
								segment length = 10mm,
								post length=0.9mm},decorate](0.62,0.5)--(1.7,6.5);
							\draw[Maroon,thick,decoration={snake,
								amplitude = 0.05mm,
								segment length = 10mm,
								post length=0.9mm},decorate](1.7,6.5)--(0,-1.7);
							\draw[Maroon,thick,decoration={snake,
								amplitude = 0.1mm,
								segment length = 7mm,
								post length=0.9mm},decorate](0,-1.7)--(-2.2,3.5);
							\draw[Maroon,thick,decoration={snake,
								amplitude = 0.1mm,
								segment length = 7mm,
								post length=0.9mm},decorate](-2.2,3.5)--(0,-5);
							\draw[Maroon,thick,decoration={snake,
								amplitude = 0.1mm,
								segment length = 7mm,
								post length=0.9mm},decorate](0,-5)--(3.2,4.5);
							\draw[Maroon,thick,decoration={snake,
								amplitude = 0.1mm,
								segment length = 7mm,
								post length=0.9mm},decorate](3.2,4.5)--(0,-3.5);
							\draw[Maroon,thick,decoration={snake,
								amplitude = 0.1mm,
								segment length = 7mm,
								post length=0.9mm},decorate](0,-3.5)--(-1.25,0.6);
							\draw[Maroon,thick,decoration={snake,
								amplitude = 0.1mm,
								segment length = 7mm,
								post length=0.9mm},decorate](-1.25,0.6)--(0,-3);
							\draw[Maroon,thick,decoration={snake,
								amplitude = 0.1mm,
								segment length = 7mm,
								post length=0.9mm},decorate](0,-3)--(1,0.44);
							
						\end{tikzpicture}
						
					}
					
					&
					
					\resizebox{0.5\textwidth}{!}{

						\begin{tikzpicture}[scale=1]
							\tikzstyle{every node}=[draw, circle, fill=white, minimum size=1.5pt, inner sep=0pt]
							\tikzstyle{dot}=[circle, fill=white, minimum size=0pt, inner sep=0pt, outer sep=-1pt]
							\node (n1) at (2*4/3,0) {};
							\node (n2) at (-2*4/3,2*2/3)  {};
							\node (n3) at (-2*2/3,-2*2/3)  {};
							
							\tikzstyle{every node}=[draw, circle, fill=white, minimum size=0.1pt, inner sep=0pt]
							\node (n6) at (-3,0) {};
							\node (n7) at (3,0)  {};
							\node (n8) at (0,1.5)  {};
							\node (n9) at (0,-1.5)  {};
							
							\foreach \from/\to in {n1/n2, n1/n3, n6/n7, n8/n9}
							\draw[decoration={snake,
								amplitude = 0.1mm,
								segment length = 7mm,
								post length=0.9mm},decorate] (\from) -- (\to);
							
							\draw[gray,dashed](n2)--(n3);
							
							\node[dot, draw=none, label=above: \tiny $Z$] at (2*4/3,0) {};
							\node[dot, draw=none, label=below: \tiny $Z^1$] at (-2*4/3,2*2/3) {};
							\node[dot, draw=none, label=above: \tiny $Z^2$] at (-2*2/3,-2*2/3) {};
							
							\node[dot, draw=none, label=above: \tiny $P_1$] at (-1,1.3) {};
							\node[dot, draw=none, label=below: \tiny $P_2$] at (0,-1.3) {};
							\node[dot, draw=none, label=above: \tiny $P_3$] at (1,0.8) {};
							\node[dot, draw=none, label=below: \tiny $C^1$] at (0,0.1) {};
							
							\node[dot, draw=none] at (0,-4.1) {};
							
							\draw[red,thick,decoration={snake,
								amplitude = 0.1mm,
								segment length = 7mm,
								post length=0.9mm},decorate](0.9,0.45)--(0,-0.89);
							\draw[red,thick,decoration={snake,
								amplitude = 0.1mm,
								segment length = 7mm,
								post length=0.9mm},decorate](-1.1,0.94)--(0,-0.89);
							
							\draw[Purple,decoration={snake,
								amplitude = 0.1mm,
								segment length = 7mm,
								post length=0.9mm},decorate](-1.19,0.98)--(-0.07,-0.9);	
							\draw[Purple,decoration={snake,
								amplitude = 0.1mm,
								segment length = 7mm,
								post length=0.9mm},decorate](-1.26,1)--(-0.12,-0.93);
							\draw[Purple,decoration={snake,
								amplitude = 0.1mm,
								segment length = 7mm,
								post length=0.9mm},decorate](-1.34,1.01)--(-0.18,-0.95);
							\draw[Purple,decoration={snake,
								amplitude = 0.1mm,
								segment length = 7mm,
								post length=0.9mm},decorate](-1.46,1.03)--(-0.26,-0.98);	
							
							\draw[orange,decoration={snake,
								amplitude = 0.1mm,
								segment length = 7mm,
								post length=0.9mm},decorate](0.82,0.45)--(0,-0.79);
							\draw[orange,decoration={snake,
								amplitude = 0.1mm,
								segment length = 7mm,
								post length=0.9mm},decorate](-1.02,0.91)--(0,-0.79);
							\draw[orange,decoration={snake,
								amplitude = 0.1mm,
								segment length = 7mm,
								post length=0.9mm},decorate](-0.94,0.89)--(0,-0.69);
							\draw[orange,decoration={snake,
								amplitude = 0.1mm,
								segment length = 7mm,
								post length=0.9mm},decorate](0.77,0.46)--(0,-0.69);
							\draw[orange,decoration={snake,
								amplitude = 0.1mm,
								segment length = 7mm,
								post length=0.9mm},decorate](0.72,0.49)--(0,-0.56);
							\draw[orange,decoration={snake,
								amplitude = 0.1mm,
								segment length = 7mm,
								post length=0.9mm},decorate](-0.85,0.87)--(0,-0.57);
							\draw[orange,decoration={snake,
								amplitude = 0.1mm,
								segment length = 7mm,
								post length=0.9mm},decorate](0.64,0.51)--(0,-0.43);
							\draw[orange,decoration={snake,
								amplitude = 0.1mm,
								segment length = 7mm,
								post length=0.9mm},decorate](-0.74,0.85)--(0,-0.43);
							
							\draw[Maroon,decoration={snake,
								amplitude = 0.1mm,
								segment length = 7mm,
								post length=0.9mm},decorate](0.95,0.44)--(0.1,-0.85);
							\draw[Maroon,decoration={snake,
								amplitude = 0.1mm,
								segment length = 7mm,
								post length=0.9mm},decorate](1,0.42)--(0.17,-0.83);
							\draw[Maroon,decoration={snake,
								amplitude = 0.1mm,
								segment length = 7mm,
								post length=0.9mm},decorate](1.06,0.41)--(0.24,-0.82);
							\draw[Maroon,decoration={snake,
								amplitude = 0.1mm,
								segment length = 7mm,
								post length=0.9mm},decorate](1.15,0.39)--(0.35,-0.78);
							
						\end{tikzpicture}
						
					}	
					
				\end{tabular}	
				
			\end{adjustbox}
			
			\caption{Left: $\partial^s T_0$ is red, $\partial^s T_1$ is red $\cup$ purple, $\partial^s T_2$ is red $\cup$ purple $\cup$ orange, and $\partial^s T_3$ is red $\cup$ purple $\cup$ orange $\cup$ maroon.\\ 
				Right: The red arc is $\gamma$ with $c(\gamma) = 3$, so $\Delta \setminus \{ \gamma \}$ has 3 components and one of them is $C^1$. Orange arcs in $C^1$ converge to $\gamma = [P_1, P_3]^s$ in the Hausdorff metric. Also, purple arcs converge to $[P_1, P_2]^s$, and maroon arcs converge to $[P_2, P_3]^s$, in the Hausdorff metric.}
			\label{fig:lozi2}
		\end{figure}
		
		Recall that $W^s_X = R \cup \left(\bigcup_{i=0}^{\infty}L^{-i}([X, V^0])\right)$, where $R$ is the component of $W^s_X\setminus\{X\}$ that is disjoint with $\Delta$, and $W^s_X \cap \Delta$ is dense
		in $\Delta$. Therefore $\left( \bigcup_{i=0}^{\infty}L^{-i}([X, V^0]) \right) \cap \Delta$ is 
		dense in $\Delta$.
		
		(1) Let $\gamma \in \Gamma$ be such
		that 
		$\gamma \setminus \partial\Delta$ is connected. 
		Let $\eps > 0$ be smaller than the
		distance between $\partial \gamma$ and the
		three vertices of $\Delta$, $Z$,
		$Z^1$ and $Z^2$, and let $U_{\eps}(\gamma) \subset \Delta$
		denote the open $\eps$-neighborhood
		of $\gamma$. Since 
		$\gamma \subset W^s_X$, there is an
		$N \in \N$ such that 
		$\gamma \subset \partial^s T_n$ for
		every $n \ge N$. Let us consider the
		maximal open connected surface $s^n$
		of $\Delta \cap T_n$ whose boundary
		contains $\gamma$. Let us denote $\partial^s s^n := \partial s^n \cap W^s_X$. There are 
		$\gamma_1, \dots , \gamma_{m(n)} \in \Gamma$, such that $\partial^s s^n = \bigcup_{i=1}^{m(n)} \gamma_i$ and
		$\gamma$ is one of them. Also
		$L^{-i}(V^0) \nin \Delta$ for
		every $i \in \N_0$, and hence
		either 
		$\gamma \subset L^{-n}([X, V^0])$,
		or $\gamma \subset L^{-n}([V^0, A^0])$.
		
		Since $\gamma$ separates $\Delta$ into exactly
		two components, $W^s_X$ is dense in $\B$,
		$L$ is order reversing on $W^u_X$ and the
		sequence of points $(A^{-n})_{n \in \N}$
		alternates around $X$ (and converges to
		$X$), we have
		$s^{n+2j} \subset s^{n+2(j-1)}$ and 
		$s^{n+2j} \cap s^{n+2j-1} = \emptyset$ for
		every $j \in \N$. Hence, there exists an
		$N' > N$ such that for every $n \ge N'$ we
		have $\partial^s s^{n} = \gamma \cup \gamma^n$ for some 
		$\gamma^n \in \Gamma$. Moreover, there
		exists an $n' \ge N'$ such that $\gamma^{n'}, \gamma^{n'+1} \subset U_{\eps}(\gamma)$
		and $\gamma^{n'}$, $\gamma^{n'+1}$ belong
		to different components for $\gamma$.
		
		(2) Let $c(\gamma) = n > 2$, 
		$\gamma \cap \partial \Delta = 
		\{ P_1, \dots , P_n \}$ and
		$P_1 \prec \dots \prec P_n$. Let 
		$k \in \{ 1, \dots , n-2 \}$ and consider
		the open component $C^k$ for 
		$[P_k, P_{k+2}] \subset \gamma$ that
		contains the whole arc $[P_k, P_{k+2}]$ in
		its boundary (see Figure \ref{fig:lozi2}, right). Let $\eps > 0$ be smaller
		than the distance between 
		$\{ P_k, P_{k+1}, P_{k+2} \}$ and
		$ \{ Z, Z^1, Z^2 \}$, and let
		$U_{\eps}([P_k, P_{k+2}]) \subset \Delta$
		denote the open $\eps$-neighborhood of
		$[P_k, P_{k+2}]$. Let $U_{\eps}^k := U_{\eps}([P_k, P_{k+2}]) \cap C^k$. Since
		$[P_k, P_{k+2}] \subset \gamma$,
		analogously as in (1), there is an $N \in \N$
		such that $[P_k, P_{k+2}] \subset
		\partial^s T_N$ and for the maximal open
		connected surface $s^N$ of
		$\Delta \cap T_N$ whose boundary contains
		$[P_k, P_{k+2}]$ we have
		$s^N \subset C^k$. Again, analogously as in
		(1), $s^{N+2j} \subset s^{N+2(j-1)}$ for
		every $j \in \N$, and hence there exists 
		an $n' > N$ such that $\partial^s s^{n'} = [P_k, P_{k+2}] \cup \gamma^{n'}$
		for some $\gamma^{n'} \in \Gamma$ and
		$\gamma^{n'} \subset U_{\eps}^k$. 
		
		The proof for the case when $[P_1, P_2] \subset \gamma$ or $[P_{n-1}, P_n] \subset \gamma$ follows analogously.
	\end{proof}
	
	To summarize, the arcs $\gamma \in\Gamma$
	satisfy the following properties:
	\begin{enumerate}[(A)]
		\item The elements of $\Gamma$ are pairwise disjoint or coincide (by the definition of $\Gamma$).
		\item Every $\gamma \in \Gamma$ is the limit of arcs in $\Gamma$ and is accumulated on both sides (by Lemma \ref{lem:gamma}).
		\item For $\gamma \in \Gamma$, the connected components of $L^{-1}(\gamma) \cap \Delta$ are elements of $\Gamma$ (since $L^{-1}(W^u_X \cap \partial \Delta) \subset W^u_X \cap \partial \Delta$ and the set of homoclinic points is invariant).
	\end{enumerate} 
	
	Now recall that a {\it metric tree} (or a {\it dendrite}) is a compact, connected, and locally connected metric space $\T$ (containing at least	two distinct points) such that for all $x, y \in \T$, $x \ne y$, there exists a unique arc in $\T$ with endpoints $x$ and $y$ (see e.g.\ \cite{BM}). We denote this unique arc in $\T$ by $[x, y]$.
	
	For completeness we recall the construction of the real tree $\T$ here and prove local connectedness of $\T$ in Lemma \ref{t:sc} below. For the proof of other properties of $\T$ the reader is referred to \cite{CP}. One denotes by $\Sigma$ the collection of sequences $(s_n)_{n=1}^\infty$ of open connected surfaces $s_n$ in $\Delta$ bounded by a finite number of elements in $\Gamma$, such that
	$\Cl(s_{n+1})\subset s_n$ for each $n$. Set $(s_n)_{n=1}^{\infty}\leq (s'_n)_{n=1}^{\infty}$ if for any $k$ there is $m$ such that 
	$\Cl(s_m)\subset s'_k$. Now let $\Sigma_0$ be the collection of sequences that are minimal for the relation $\leq$. One defines $\T$ as the quotient of $\Sigma_0$ by the relation "$\equiv$" defined by 
	$$(s_n)_{n=1}^{\infty}\equiv(s'_n)_{n=1}^\infty\textrm{ if and only if }(s_n)_{n=1}^{\infty}\leq(s'_n)_{n=1}^\infty\textrm{  and }(s'_n)_{n=1}^{\infty}\leq (s_n)_{n=1}^\infty.$$ 
	For convenience we now define a set-valued function, which was not originally in \cite{CP}. With $2^\Delta$ standing for the hyperspace of compact subsets of $\Delta$ we define $\psi : \T \to 2^\Delta$ by 
	$$\psi([(s_n)_{n=1}^{\infty}])=\bigcap_{n=1}^\infty s_n,$$ 
	which assigns to each equivalence class $[(s_n)_{n=1}^{\infty}]$ in $\T$ a continuum $\bigcap_{n=1}^\infty s_n$, which is a geometric realization of that class. It is easy to see that $\psi$ is one-to-one, since if $(s'_n)_{n=1}^{\infty}\notin [(s_n)_{n=1}^{\infty}]$ then $\bigcap_{n=1}^\infty s_n\neq\bigcap_{n=1}^\infty s'_n$. Let $\A=\psi(\T)$. Note that $\Gamma\subset\A$, but $\A\setminus \Gamma\neq\emptyset$. Indeed, an arc $\gamma\in\Gamma$ is represented by sequences $(s_n)_{n=1}^{\infty}$ such that $\gamma \subset s_n$ for each $n$ and $\bigcap s_n = \gamma$. However, there exist sequences $(s_n)_{n=1}^{\infty}$ such that $\bigcap s_n\notin\Gamma$, since $\Gamma$ is countable, but $\T$ is not. We shall call the elements of $\psi^{-1}(\Gamma)$ the {\it principal points} of $\T$, and the elements of $\psi^{-1}(\A\setminus \Gamma)$ the {\it ideal points} of $\T$. 
	
	We define $\tilde{\pi} : \mathcal{A} \to \T$ as the inverse of $\psi$; i.e.\ $\tilde{\pi} := \psi^{-1}$.
	
	\begin{lem}\label{t:sc} 
		Let $F$ be a Lozi-like map that satisfies $(L3)$ and $(L4)$. There exists a metric tree $\T$ and a continuous map $f : \T \to \T$ such that $(\Delta, F|_\Delta)$ is semi-conjugate to $(\T,f)$.
	\end{lem}
	\begin{proof}
		Since $\Gamma$ satisfies (A), (B), (C), the proof that $\T$ is a metric tree is the same as in the proof of \cite[Theorem 3]{CP}, with the only exception that it was not shown that $\T$ is locally connected, so we prove it in what follows. 
		
		Recall that the (countable) base in 
		$\T$ is determined as follows. Let $s'$ be a an open surface in $\Delta$ bounded by finitely many elements of $\Gamma$. The surface $s'$ gives an open set $U_{s'} = \{ (s_n)_{n \in \mathbb{N}} : \Cl(s_{n+1})\subset s_n, \forall n, \textrm{ and } \exists N, \, \Cl(s_n) \subset s', n \geq N \}$. We claim that $U_{s'}$ is connected. To justify this it is enough to show that given surfaces $s,t$ bounded by finitely many elements from $\Gamma$ we have $U_s\cap U_t=\emptyset$ if and only if $s\cap t=\emptyset$. First suppose $U_s\cap U_t=\emptyset$ but $s\cap t\neq\emptyset$. Since $s\cap t$ is open, and $W^s_X$ is dense in $\Delta$, by definition of $\Gamma$ there exists a $\gamma\in\Gamma$ such that $\gamma\cap s\cap t\neq\emptyset$. Since the elements of $\Gamma$ are pairwise disjoint it follows that $\gamma$ cannot intersect the boundaries of $s$ and $t$ and so $\gamma\subset s\cap t$. Since $\gamma$ is accumulated on both sides, there exists a sequence $\left(\gamma_n\right)_{n\in\mathbb{N}}$ converging to $\gamma$. Clearly $\gamma_n\in U_s\cap U_t$ for sufficiently large $n$, leading to a contradiction. Now suppose $s \cap t = \emptyset$. If $U_s \cap U_t \neq \emptyset$, there exists an $N$ and a decreasing sequence of surfaces $s_n$ such that $\Cl(s_n) \subset s\cap t$ for all $n\geq N$.
		Therefore, for any $\gamma \in \Gamma$ contained in the boundary of one of them, say \ $s_{N+1}$, we have $\gamma \subset s \cap t$. This contradiction completes the proof that any basic open set $U_{s'}$ is connected.
		
		The map $F$ induces a continuous map $f : \T \to \T$ in a natural way. Since $\bigcup\Gamma$ is dense in $\Delta$ the map $f$ is uniquely determined by letting $f(\tilde{\pi}(\gamma)) = \tilde{\pi}(F(\gamma))$ for each $\gamma \in \Gamma$.			
	\end{proof}
	
	Given a continuous map $f:X\to X$ on a compact metric space $X$, let
	\begin{equation}
		\underleftarrow{\lim} (X,f)
		:=
		\{\big(x_0,x_1,\ldots \big) \in X^{\mathbb{N}_0} :
		x_i\in X, x_i=f(x_{i+1}), \text{ for any }i \in \N_0 \}.
	\end{equation}
	$\underleftarrow{\lim} (X,f)$ is called the \emph{inverse limit of $X$ with the bonding map $f$}, or the \emph{inverse limit of $f$} for short. $\underleftarrow{\lim} (X,f) $ is equipped with
	metric induced from the 
	\emph{product metric} in $X^{\mathbb{N}_0}$. 
	The inverse limit space $\underleftarrow{\lim} (X,f)$
	also comes with a homeomorphism that extends $f$, 
	called the \emph{natural extension} of $f$, or the 
	\emph{shift homeomorphism}
	$\sigma_{f}:\underleftarrow{\lim}(X, f)
	\to \underleftarrow{\lim}(X, f)$, 
	defined as follows. 
	For any $\underbar x:= \big(x_0,x_1,x_2,\ldots \big)\in \underleftarrow{\lim}(X,f)$,
	\begin{equation}
		\sigma_f(\underbar x):=\big(f(x_0),f(x_1),f(x_2),\ldots \big)= \big(f(x_0),x_0,x_1,\ldots \big).
	\end{equation}
	It is well known that the natural extension $\sigma_f$ of a map $f$ is the `smallest' invertible map semi-conjugate to $f$ in the sense that any other invertible map semi-conjugate to $f$ is also semi-conjugate to $\sigma_f$.
	
	\begin{proof}[Proof of Theorem \ref{thm:conjugacy}.]
		First note that $F|_{\Lambda}$ is conjugate to the shift $\sigma_{F}$ on $\invlim(\Delta, F)$, since $\invlim(\Delta, F)$ is homeomorphic to $\Lambda$: If $(x_0, x_1, x_2, \dots) \in \invlim(\Delta, F)$, then $x_i \in \Delta$ for every $i \in \N_0$, implying $F^{-i}(x_0) \in \Delta$ for every $i \in \N$, and this holds only if $x_0 \in \Lambda$, so $h : \Lambda \to \invlim(\Delta, F)$ defined by $h(x_0) = (x_0, x_1, x_2, \dots)$, where $x_i = F^{-i}(x_0)$, is a homeomorphism.
		
		Let $\pi : \Delta \to \T$ be the projection map semi-conjugating $F|_{\Delta}$ to $f$, 
		guaranteed by Theorem \ref{t:sc}, i.e., $\pi(x)=t$ if $x\in\gamma\in\mathcal{A}$ and $\tilde{\pi}(\gamma)=t$. In particular $\pi$ identifies each $\gamma \in \Gamma$ with 
		a point in $\T$, $\pi(\Gamma)$ is dense in $\T$, and $\bigcup \Gamma$ is dense in $\Delta$. Let 
		$\Pi : \invlim(\Delta, F) \to \invlim(\T, f)$ be given by 
		$\Pi(x_0, x_1, x_2, \ldots) = (\pi(x_0), \pi(x_1), \pi(x_2), \ldots)$. We shall show that $\Pi$ 
		is injective. First we argue that $\Pi$ is injective on $\invlim(\bigcup \Gamma, F|_{\bigcup \Gamma})$. 
		Let $\Pi(x_0, x_1, x_2, \ldots) = \Pi(y_0, y_1, y_2, \ldots)$. Then $\pi(x_i) = \pi(y_i)$ for every 
		$i \in \N$. Denote by $\gamma_i$ the element of $\Gamma$ which contains $x_i$ and $y_i$, 
		$x_i, y_i \in \gamma_i$ for every $i \in \N$. Suppose by contradiction that there exists $k \in \N$ such that $x_k \neq y_k$. Then  
		$x_i \neq y_i$ for every $i \in \N$, since $F$ is a bijection. Note that for every arc $I \subset W^s_X$ there exists a point $P \in I$ which is not
		contained in $\Lambda$ and hence there is $K \in \N$ such that $F^{-K}(P) \nin \Delta$.
		Therefore, for $i \in \N$ there exist a point $P_i \in [x_i, y_i] \subset \gamma_i \subset W^s_X$
		and $K_i \in \N$ such that $F^{-K_i}(P_i) \nin \Delta$, contradicting the assumption that
		$[x_{i+K}, y_{i+K}] \subset \gamma_{i+K} \subset \Delta$.
		
		Therefore we have shown that $\Pi \circ \sigma_f(x) = \sigma_{F} \circ \Pi(x)$ for any 
		$x \in \invlim(\bigcup \Gamma, L|_{\bigcup \Gamma})$. Now suppose that $\Pi(x_0, x_1, x_2, \ldots) = \Pi(y_0, y_1, y_2, \ldots)$, and so $\pi(x_i) = \pi(y_i)$ for every 
		$i \in \N$, but $\pi^{-1}(x_i)\notin\Gamma$ for some, and consequently all, $i \in \N$. Let $\alpha_i=\pi^{-1}(x_i)$. 
	
		\begin{claim*}\label{Ci}
			$\alpha_i$ does not contain any subcontinuum $C$ of $\Lambda$
		\end{claim*}
	
		\begin{proof}[Proof of claim.]
			There exists a sequence of surfaces $(s^i_n)_{n=1}^{\infty }\in \Sigma$ such that $\alpha_i = \bigcap_{n\in\mathbb{N}} \Cl(s^i_n)$. Consequently $\alpha_i$ is a continuum. Note that 
			\begin{center}
				$I \not\subset \alpha_i \textrm{ for any arc } I \subset W^{u}_X,$
			\end{center}
			since then there would exist a sequence of arcs (contained in boundaries of surfaces $s^i_n$) $(I_n)_{n=1}^{\infty} \subset W^s_X$ such that $I_n\underset{n\to\infty}{\longrightarrow} I$ in the Hausdorff metric $d_H$, which is impossible by Definitions \ref{df:sh} and \ref{df:ll} that guarantee existence of disjoint stable and unstable cones. Similarly, $\alpha_i$ does not contain any subcontinuum $C$ of $\Lambda$, since then there would exist arcs $(I_n)_{n=1}^{\infty} \subset W^s_X$ and arcs $(J_n)_{n=1}^{\infty} \subset W^u_X$ such that $d_H(I_n, J_n)\underset{n \to \infty}{\longrightarrow} 0$.
		\end{proof}
	
		Since $\Delta \setminus \Lambda$ is dense and open in $\Delta$, so is $(\Delta \setminus \Lambda) \cap \alpha_i$ in $\alpha_i$ for all $i$. Hence $(\Delta \setminus \Lambda) \cap \alpha_i$ separates $x_i$ from $y_i$, and so again $\alpha_K \not\subset \Delta$ for some $K \in \mathbb{N}$. This contradiction concludes the proof. 
	\end{proof}

	\subsection{Conjugacy on the H\'enon attractors}
	
	Now we turn our attention to the H\'enon maps.
	Crovisier and Pujals proved in \cite[Theorems 2 and 3]{CP} that for $a \in (1, 2)$ and $b \in (-1/4, 1/4) \setminus \{ 0 \}$, the H\'enon map $H_{a,b}$ is mildly dissipative on the disc $\d = \{ (x, y) : |x| < 1/2 + 1/a, |y| < 1/2 - a/4 \}$, and therefore there exists a semi-conjugacy $\pi : (\d, H_{a,b}) \to (\T', h)$ to a continuous map $h$ on a compact real tree $\T'$ which induces an injective map on the set of non-atomic ergodic measures $\mu$ of  $H_{a,b}$, and such that the entropies of $\mu$ and $\pi_*(\mu)$ are the same.
	
	By choosing in their construction a particular family of arcs, we shall construct a metric tree $\T'$ and a map $h:\T'\to\T'$, and prove that $H_{a,b}|_{\Lambda}$ is conjugate to the shift homeomorphism $\sigma_f$ on $\invlim(\T',h)$ for $(a, b) \in \mathcal{WY}$, where $\mathcal{WY}$ is the Wang-Young parameter set from \cite[Corollary 1.3]{WY} for the orientation-reversing H\'enon maps $H_{a,b}$ ($b>0$). We shall do that by defining the set $\Gamma'$ for the H\'enon maps $H_{a,b}$ in an analogous way as the set $\Gamma$ is defined for the Lozi-like maps, so with stronger assumptions than in \cite{CP}, and show that the arcs of $\Gamma'$ satisfy analogs of (A), (B) and (C) when $(a, b) \in \WY$. So our choice of $\Gamma'$ is tailored for this particular set of parameters $\mathcal{WY}$. 
	
	 Let $D$ be a (closed) disc as in \cite[Lemma 4.4, Figure 2]{BK}, or as in \cite[Proposition 4.1, Figure 5]{MV} (see Figure \ref{fig:D} below). $D$ is mapped into itself by $H_{a,b}$, contains a fixed point $P$ of $H_{a,b}$ and $\Lambda = \Cl W^u_P \subset D$ by \cite[Proposition 4.1]{MV}. 
	 
	 \begin{figure}[h]
	 	\hspace{3cm}
	 	\resizebox{0.6\textwidth}{!}{
	 		\begin{tikzpicture}
	 			
	 			\fill[green!5!white] (-6.65,3.43)--(-4.74,-3.72) -- (-6.65,3.43) .. controls (10.75,0.9) and (10.75,-0.7) .. (-4.74,-3.75) -- cycle;
	 			
	 			\tikzstyle{every node}=[draw, circle, fill=white, minimum size=2pt, inner sep=0pt]
	 			\tikzstyle{dot}=[circle, fill=white, minimum size=0pt, inner sep=0pt, outer sep=-1pt]
	 			\node (n1) at (5*4/3,0) {};
	 			\node (n2) at (-6.65,5*2/3)  {};
	 			\node (n3) at (-4.74,-3.72)  {};
	 			\node (n4) at (5*5/9,0)  {};
	 			\node (n5) at (-4.59,-1.76) {};
	 			\node (n6) at (-3.98,-2.1)  {};
	 			\node (n7) at (-1.61,2.08)  {};
	 			\node (n8) at (5*61/51,0)  {};
	 			\node (n9) at (5*65/75,0)  {};
	 			
	 			\node (n10) at (5*4/9,1.87)  {};
	 			
	 			\node (n21) at (0.04,2.32)  {};
	 			\node (n22) at (-0.05,1.06)  {};
	 			\node (n23) at (0.05,-0.74)  {};
	 			\node (n24) at (-0.05,-1.28)  {};
	 			\node (n25) at (0.05,-2.08)  {};
	 			\node (n26) at (0,-2.74)  {};
	 			
	 			\node (n27) at (0,1.87)  {};
	 			\node (n28) at (-0.01,1.46)  {};
	 			
	 			\draw (-6.65,5*2/3)--(-4.74,-3.72);
	 			
	 			\draw (3,1.7) .. controls (7.7,0.7) and  (7.7,-0.6).. (4,-1.7);	
	 			\draw (3,1.7) .. controls (-9,4.25) and (-9,3.7) .. (-1.5,1.5);
	 			\draw (0,-2.1) .. controls (-6.9,-4.1) and (-6.9,-4.6) .. (4,-1.7);
	 			\draw (2.8,1.2) .. controls (7.44,0) and (7.44,-0.1) .. (0,-2.1);
	 			\draw (2.8,1.2) .. controls (-3,2.7) and (-3,2.2) .. (2.3,0.8);
	 			\draw (2.3,0.8) .. controls (5.63,-0.2) and (5.63,0.1) .. (-5*19/24+0.5,-5*5/12);
	 			\draw (-1.5,1.5) .. controls (4.55,-0.4) and (4.55,0.3) .. (-5*5/6+0.5,-5*1/3+0.15);
	 			\draw (-5*5/6+0.5,-5*1/3+0.15) .. controls (-5*5/6-0.7,-5*1/3-0.08) and (-5*5/6-0.7,-5*1/3-0.18) .. (-5*5/6+0.2,-5*1/3-0.05);
	 			\draw (-5*19/24+0.2,-5*5/12+0.1) .. controls (-5*19/24-0.15,-5*5/12) and (-5*19/24-0.15,-5*5/12-0.1) .. (-5*19/24+0.5,-5*5/12);
	 			
	 			\node[dot, draw=none, label=above: $P$] at (2.25,2.5) {};
	 			\node[dot, draw=none, label=above: $l$] at (-6,0) {};
	 			\node[dot, draw=none, label=above: $W^u_P$] at (5,2) {};
	 			\node[dot, draw=none, label=above: $D$] at (-3,0.5) {};
	 			\node[dot, draw=none, label=above: $\partial^uD$] at (5,-1.3) {};
	 			
	 		\end{tikzpicture}
	 	}
	 	\caption{Disc $D$. The boundary of $D$ consists of an arc of $W^u_P$, denoted by $\partial^u D$, and a straight line segment tangencial to $W^u_P$, denoted by $l$, so $\partial D = \partial^u D \cup l$.}
	 	\label{fig:D}
	 \end{figure}
	
	For arbitrary $b$ (positive or negative), small enough, it is shown in \cite{MV} that $W^s_P$ is dense in $D$. Therefore, if we let 
	
	$$\Gamma' = \{ \gamma : \gamma \textrm{ is a connected component of } D \cap W^s_P \}.$$
	then the following conditions hold:
	
	(A') The elements of $\Gamma'$ are pairwise disjoint or coincide (by the definition of $\Gamma'$).
	
	(B') Every $\gamma \in \Gamma'$ is the limit of arcs in $\Gamma'$ and is accumulated on both sides.
	
	(C') For $\gamma \in \Gamma'$ the connected components of $H_{a,b}^{-1}(\gamma) \cap D$ are elements of $\Gamma'$.

	By using (A'), (B'), (C') and $D$ as an analog of $\Delta$, the proof of Theorem \ref{cor:conjugacy} is practically the same as the proof of Theorem \ref{thm:conjugacy}, subject only to obvious modification of terminology and notation, with the exception that in order to justify Claim in the proof of Theorem \ref{thm:conjugacy} we use Theorem 1.2 in \cite{WY},that guarantees uniform hyperbolicity for all orbits that stay $\epsilon$ away from the critical set, for any $\epsilon>0$, instead of Definitions \ref{df:sh} and \ref{df:ll} used for the Lozi-like maps.

	\section{Geometry of the trees $\mathbb{T}$}\label{geo}
	
	In this section we will prove Theorems \ref{thm:dense} and \ref{cor:dense}. Again, we first focus on the Lozi-like maps. 
	
	\subsection{Dense branching for trees arising from the Lozi-like attractors}
	
	Let $F$ be a Lozi-like map that satisfies $(L3)$ and $(L4)$.  We will use the following notation: If $J \subset \R^2$ is an arc with endpoints $P$ and $Q$, then $\Int J = J \setminus \{ P, Q \}$. For $\alpha \in \A$ we denote by $c(\alpha)$ the number of components of
	$\Delta \setminus \{ \alpha \}$. By definition of branch points the following lemma holds.
	\begin{lem}
		$\pi(\alpha)$ is a branch point if and only if $c(\alpha) > 2$.
	\end{lem}
	
	Below we shall inductively define stems, levels of stems and levels of branch points. Let $\A^0 := \{ \alpha \in \A : \alpha \textrm{ separates $Z$ and $Z^1$} \} \cup \{ Z, Z^1 \}$. We call $B^0 := \pi(\A^0) \subset \T$ a {\it stem} of $\T$, and define the {\it level of the stem} $B^0$ to be zero. Let $E = \Delta \cap \kappa$. Recall that $Y^0 := Z^{-1}$ is an endpoint of $E$ (see Figure \ref{fig:lozi2}) and that the set of all basic V-points (that is, V-points of level 1) is dense in $E \subset \kappa$ (see Remark \ref{rem:dense}). Recall also that the `triangle' with vertices $X$, $Z$, $W$ (see Figure \ref{fig:D}) is mapped by $F^{-1}$ onto the `triangle' $T_0$.
	
	Suppose that $\alpha \in \A^0$ is such that one of its points lies in $(X, Z)^u$. Then, there is a point of $\alpha$ that lies in $(Z, W)^u$, and a point, say $Q \in \alpha \cap F(\kappa)$. Also, $F^{-1}(\alpha) \subset T_0$, there are two points of $F^{-1}(\alpha)$ that lie in $(A^0, X)$, $F^{-1}(Q) \in E$ is a V-point, and so $F^{-1}(\alpha)$ separates $Y^0$ and $\partial^s (T_0 \cap \Delta)$, but $F^{-1}(\alpha)$ need not be in $\A^0$. Since the set of basic V-points is dense in $E$, there exists a unique element of $\A^0$, say $\alpha^0 \in \A^0$, such that $\alpha^0$ separates $Y^0$ and $\partial^s(T_0 \cap \Delta)$. Note that since $\alpha^0 \in \A^0$, $\alpha^0$ also separates two components of $\partial^s(T_0 \cap \Delta)$ and hence $\alpha^0$ is the maximal element of $\A$ that separates $Y^0$ and $\partial^s(T_0 \cap \Delta)$ in the sense that every other element of $\A$ that separates $Y^0$ and $\partial^s(T_0 \cap \Delta)$ lies in the component of $\Delta \setminus \alpha^0$ that contains $Y^0$. Let $b^0 := \pi(\alpha^0)$. Note that $b^0$ is a branch point. We define the level of the branch point $b^0$ to be zero.
	
	\begin{figure}[ht]
		\begin{center}
			{\centering
				\begin{tikzpicture}[scale=1.3]
					\tikzstyle{every node}=[draw, circle, fill=white, minimum size=1.5pt, inner sep=0pt]
					\tikzstyle{dot}=[circle, fill=white, minimum size=0pt, inner sep=0pt, outer sep=-1pt]
					\node (n1) at (2*4/3,0) {};
					\node (n2) at (-2*4/3,2*2/3)  {};
					\node (n3) at (-2*2/3,-2*2/3)  {};
					\node (n4) at (2*4/9,2*2/9) {};
					\node (n5) at (0,0.67)  {};
					\node (n10) at (0,-2*4/3)  {};
					\node (n11) at (0,-0.89)  {};
					\node (n12) at (-4/3,1)  {};
					
					\node (n21) at (2*4/3+6,0) {};
					\node (n22) at (-2*4/3+6,2*2/3)  {};
					\node (n23) at (-2*2/3+6,-2*2/3)  {};
					\node (n24) at (2*4/9+6,2*2/9) {};
					\node (n25) at (0+6,0.67)  {};
					\node (n30) at (0+6,-2*4/3)  {};
					\node (n31) at (0+6,-0.89)  {};
					\node (n32) at (-4/3+6,1)  {};
					
					\tikzstyle{every node}=[draw, circle, fill=white, minimum size=0.1pt, inner sep=0pt]
					\node (n6) at (-2.5,0) {};
					\node (n7) at (3,0)  {};
					\node (n8) at (0,2)  {};
					\node (n9) at (0,-3)  {};
					
					\node (n26) at (-2.5+6,0) {};
					\node (n27) at (3+6,0)  {};
					\node (n28) at (0+6,2)  {};
					\node (n29) at (0+6,-3)  {};	
					
					\foreach \from/\to in {n1/n2, n1/n3, n2/n3, n6/n7, n8/n9, n4/n10, n10/n12}
					\draw (\from) -- (\to);
					
					\foreach \from/\to in {n21/n22, n21/n23, n22/n23, n26/n27, n28/n29, n24/n30, n30/n32}
					\draw (\from) -- (\to);
					
					\node[dot, draw=none, label=above: $Z$] at (2*4/3,0) {};
					\node[dot, draw=none, label=above: $Y^0$] at (0.2,1.2) {};
					\node[dot, draw=none, label=below: $A^0$] at (-1.3,1) {};
					\node[dot, draw=none, label=above: $L_{a,b}(Z)$] at (-2.6,2.15) {};
					\node[dot, draw=none, label=above: $L_{a,b}^2(Z)$] at (-1.25,-1.1) {};
					\node[dot, draw=none, label=below: $X$] at (8/9,2*2/9) {};
					\node[dot, draw=none, label=above: $V^0$] at (0.2,-2.6) {};
					\node[dot, draw=none, label=above: $Y^0_2$] at (0.2,-0.82) {};
					
					\node[dot, draw=none, label=above: $Z$] at (2*4/3+6,0) {};
					\node[dot, draw=none, label=above: $Y^0$] at (0.2+6,1.2) {};
					\node[dot, draw=none, label=below: $A^0$] at (-1.3+6,1) {};
					\node[dot, draw=none, label=above: $L_{a,b}(Z)$] at (-2.6+6,2.15) {};
					\node[dot, draw=none, label=above: $L_{a,b}^2(Z)$] at (-1.25+6,-1.1) {};
					\node[dot, draw=none, label=below: $X$] at (8/9+6,2*2/9) {};
					\node[dot, draw=none, label=above: $V^0$] at (0.2+6,-2.6) {};
					\node[dot, draw=none, label=above: $Y^0_2$] at (0.2+6,-0.82) {};
					
					\draw[Orange,thick](-4/3,1)--(-0.58,-1.08);
					\draw[Orange,thick](2*4/9,2*2/9)--(0.56,-0.71);
					
					\draw[Orange,thick](-4/3+6,1)--(-0.58+6,-1.08);
					\draw[Orange,thick](2*4/9+6,2*2/9)--(0.56+6,-0.71);
					
					\draw[red,thick](-0.52,0.8)--(0,-0.86);
					\draw[red,thick](0.5,0.54)--(0,-0.86);
					
					\draw[red,thick](-0.52+6,0.8)--(5.9,-0.92);
					\draw[red,thick](-0.52+6,0.8)--(0+6,-0.84);
					\draw[red,thick](0.5+6,0.54)--(0+6,-0.84);
					
					\draw[OliveGreen,thick](0,0.67)--(0,-0.9);
					
					\draw[OliveGreen,thick](0+6,0.67)--(0+6,-0.9);
					
			\end{tikzpicture}}
			
		\end{center}
		\caption{The region $T_0 \cap \Delta$. $\partial^s (T_0 \cap \Delta)$ is in orange, $E$ is in green, $\alpha^0$ is in red. $\alpha^0$ may have various ``forms''. In the left picture, the V-point in $\alpha^0 \cap \partial \Delta$ lies in $[Z, Z^2]^u$, and in the right picture the V-point in $\alpha^0 \cap \partial \Delta$ lies in $[Z, Z^1]^u$.}
		\label{fig:region}
	\end{figure}
	
	Let $\A^1 := \{ \alpha \in \A : \alpha
	\textrm{ separates $Y^0$ and $\alpha^0$} \} \cup \{ Y^0, \alpha^0 \}$. We call 
	$B^1 := \pi(\A^1) \subset \T$ a {\it stem} of $\T$, and define the {\it level of the
		stem} $B^1$ to be one. Note that $b^0 = B^0 \cap B^1$. 
	
	Let us assume that we have already defined $\A^n$,
	all stems of level $n$, and all branch
	points of level $n-1$. Now we shall define
	$\A^{n+1}$, stems of level $n+1$ and branch
	points of level $n$.
	
	Recall that $T_n = F^{-n}(T_0)$
	is a polygon and that $T_n \cap \Delta$
	consists of finitely many components
	$s^n_i$, $i = 1, \dots , m_n$, that is
	$T_n \cap \Delta = \bigcup_{i=1}^{m_n} s^n_i$, $s^n_i \cap s^n_j = \emptyset$
	for $i \ne j$. In addition, $F^{-n}(E)$ is an arc
	and $F^{-n}(E) \cap \partial \Delta$
	consists of finitely many points. Let
	$F^{-n}(E) \cap \partial \Delta = \{ Y^n_1, \dots , Y^n_{k_n}\}$, 
	where $F^{-n}(Y^0) = Y^n_1 \prec Y^n_2 \prec Y^n_3 \prec \cdots \prec Y^n_{k_n}$,
	and $E^n_j = [Y^n_{2j-1}, Y^n_{2j}] \subset \Delta$ is such that $E^n_j \cap \partial \Delta = \partial E^n_j$ for every 
	$j = 1, \dots , k_n/2$ (i.e.\ $\Int E^n_j \cap \partial \Delta = \emptyset$). Note that 
	$k_n/2 \ge m_n$, since it may happen that some $s^n_i$ contains more
	than one component $E^n_j$ of $F^{-n}(E) \cap \Delta$. 
	Let us fix $s^n_j$ for some $j \in \{ 1, \dots , m_n \}$. Recall that the stable boundary of $s^n_j$ is $\partial^s s^n_j = \partial s^n_j \cap W^s_X$ and the unstable boundary of $s^n_j$ is $\partial^u s^n_j = \partial s^n_j \cap \Delta$. There are finitely many $\gamma_1, \dots , \gamma_l \in \Gamma$ such that $\partial^s s^n_j = \bigcup_{i= 1}^l \gamma_i$. The unstable boundary $\partial^u s^n_j$ has the same number of components $\delta_1, \dots , \delta_l$, $\delta_i \subset \partial \Delta$ and $\partial^u s^n_j =  \bigcup_{i= 1}^l \delta_i$. 
	
	In order to define stems of level $n+1$ we have two cases to consider. First, let us suppose that there is a unique odd $p$, $1 \le p < k_n$, such that $Y^n_p, Y^n_{p+1} \in \partial s^n_j$. Since the set of V-points of level $n+1$ is dense in $E^n_{(p+1)/2} = [Y^n_p, Y^n_{p+1}]$, there is a unique element of $\bigcup_{q=0}^n \A^q$, say $\alpha^n_p \in$ $\bigcup_{q=0}^n \A^q$, such that $\alpha^n_p$ separates $Y^n_p$ and $\partial^s s^n_j$, and that is maximal with respect to this property, i.e., in the sense that every other element of $\A$ that separates $Y^n_p$ and $\partial^s s^n_j$ lies in the component of $\Delta \setminus \alpha^n_p$ that contains $Y^n_p$. 
	
	\begin{figure}[ht]
		\begin{center}
			{\centering
				\begin{tikzpicture}[scale=2.2]
					\tikzstyle{every node}=[draw, circle, fill=white, minimum size=1.5pt, inner sep=0pt]
					\tikzstyle{dot}=[circle, fill=white, minimum size=0pt, inner sep=0pt, outer sep=-1pt]
					\node (n1) at (2*4/3,0) {};
					\node (n2) at (-2*4/3,2*2/3)  {};
					\node (n3) at (-2*2/3,-2*2/3)  {};
					\node (n4) at (2*4/9,2*2/9) {};
					\node (n5) at (-0.36,0.75)  {};
					\node (n10) at (0,-2*4/3)  {};
					\node (n11) at (0.3,-0.79)  {};
					\node (n12) at (-4/3,1)  {};
					
					\tikzstyle{every node}=[draw, circle, fill=white, minimum size=0.1pt, inner sep=0pt]
					\node (n6) at (-3,0) {};
					\node (n7) at (3,0)  {};
					\node (n8) at (0,2)  {};
					\node (n9) at (0,-3)  {};
					
					\foreach \from/\to in {n1/n2, n1/n3, n2/n3, n6/n7, n8/n9, n4/n10, n10/n12}
					\draw (\from) -- (\to);
					
					\node[dot, draw=none, label=above: $Z$] at (2*4/3,0) {};
					\node[dot, draw=none, label=above: $Y^n_p$] at (-0.3,1.05) {};
					\node[dot, draw=none, label=below: $A^0$] at (-1.3,1) {};
					\node[dot, draw=none, label=above: $L_{a,b}(Z)$] at (-2.6,1.8) {};
					\node[dot, draw=none, label=above: $L_{a,b}^2(Z)$] at (-1.25,-1.2) {};
					\node[dot, draw=none, label=below: $X$] at (8/9,2*2/9) {};
					\node[dot, draw=none, label=above: $V^0$] at (0.2,-2.6) {};
					\node[dot, draw=none, label=above: $Y^n_{p+1}$] at (0.3,-0.8) {};
					
					\draw[Orange,thick](-0.15,0.71)--(0,-0.4);
					\draw[Orange,thick](0.41,0.57)--(0,-0.4);
					\draw[Orange,thick](0.57,0.53)--(0.37,-0.77);
					
					\draw[red,thick](-0.25,0.73)--(0,-0.6);
					\draw[red,thick](0.5,0.54)--(0,-0.6);
					\draw[red,thick](0.5,0.54)--(0.3,-0.78);
					
					\draw[red,thick](-0.5,0.79)--(0,-0.88);
					\draw[red,thick](0.4,0.2)--(0,-0.88);
					\draw[red,thick](0.4,0.2)--(0.3,-0.78);
					
					\draw[Orange,thick](-0.6,0.82)--(-0.1,-0.92);
					\draw[Orange,thick](0.33,-0.1)--(0.1,-0.85);
					\draw[Orange,thick](0.33,-0.1)--(0.25,-0.8);	
					
					\draw[OliveGreen,thick](-0.36,0.75)--(0,-0.75);
					\draw[OliveGreen,thick](0.44,0.34)--(0,-0.75);
					\draw[OliveGreen,thick](0.44,0.34)--(0.3,-0.78);
					
			\end{tikzpicture}}
			
		\end{center}
		\caption{The region $s^n_j$ such that $\partial^s s^n_j$ has 4 components (not in scale). $\partial^s s^n_j$ is in orange, $E^n_{(p+1)/2}$ is in green, $\alpha^n_p$ is in red. $\alpha^n_p$ may have various ``forms''. In this figure $\alpha^n_p \cap \partial \Delta$ contains 3 V-points. Some other possibilities are shown in the next figure.}
		\label{fig:region1}
	\end{figure}
	
	\begin{figure}[ht]
		\begin{center}{\centering
				\begin{tikzpicture}[scale=3.5]
					
					\tikzstyle{dot}=[circle, fill=white, minimum size=0pt, inner sep=0pt, outer sep=-1pt]
					
					\node (n10) at (2*4/9-2,2*2/9) {};
					\node (n11) at (0.56-2,-0.7)  {};
					\node (n12) at (-4/3-2,1)  {};
					\node (n13) at (-0.57-2,-1.1)  {};
					
					\foreach \from/\to in {n10/n12, n11/n13}
					\draw (\from) -- (\to);
					
					\node (n4) at (2*4/9,2*2/9) {};
					\node (n11) at (0.56,-0.7)  {};
					\node (n12) at (-4/3,1)  {};
					\node (n13) at (-0.57,-1.1)  {};
					
					\foreach \from/\to in {n4/n12, n11/n13}
					\draw (\from) -- (\to);	
					
					\node[dot, draw=none, label=above: $Y^n_p$] at (-0.3,1) {};
					\node[dot, draw=none, label=above: $Y^n_{p+1}$] at (0.3,-0.88) {};
					\node[dot, draw=none, label=above: $Y^n_p$] at (-0.3-2,1) {};
					\node[dot, draw=none, label=above: $Y^n_{p+1}$] at (0.3-2,-0.88) {};
					
					\tikzstyle{every node}=[draw, circle, fill=white, minimum size=1.5pt, inner sep=0pt]
					\tikzstyle{dot}=[circle, fill=white, minimum size=0pt, inner sep=0pt, outer sep=-1pt]		
					
					\node (n1) at (-0.3,0.74)  {};
					\node (n2) at (0.32,-0.78)  {};	
					\node (n3) at (-0.3-2,0.74)  {};
					\node (n4) at (0.32-2,-0.78)  {};	
					
					\draw[orange,thick](-0.15-2,0.7)--(0-2,-0.4);
					\draw[orange,thick](0.45-2,0.55)--(0-2,-0.4);
					\draw[orange,thick](0.57-2,0.53)--(0.37-2,-0.77);
					
					\draw[red,thick](-0.25-2,0.73)--(0-2,-0.59);
					\draw[red,thick](0.5-2,0.54)--(0-2,-0.59);
					\draw[red,thick](0.5-2,0.54)--(0.41-2,0.05);
					
					\draw[magenta,thick](0.36-2,0.05)--(0.05-2,-0.88);
					\draw[magenta,thick](0.54-2,0.53)--(0.32-2,-0.78);
					\draw[magenta,thick](0.36-2,0.05)--(0.32-2,-0.78);
					
					\draw[red,thick](-0.36-2,0.75)--(0-2,-0.89);
					\draw[red,thick](0.42-2,0.29)--(0-2,-0.89);
					\draw[red,thick](0.42-2,0.29)--(0.41-2,0.05);
					
					\draw[Orange,thick](-0.45-2,0.78)--(-0.07-2,-0.92);	
					\draw[orange,thick](0.33-2,-0.1)--(0.1-2,-0.86);
					\draw[orange,thick](0.33-2,-0.1)--(0.25-2,-0.81);
					
					\draw[OliveGreen,thick](-0.3-2,0.74)--(0-2,-0.74);
					\draw[OliveGreen,thick](0.45-2,0.4)--(0-2,-0.74);
					\draw[OliveGreen,thick](0.45-2,0.4)--(0.32-2,-0.78);

					\draw[orange,thick](-0.15,0.71)--(0,-0.38);
					\draw[orange,thick](0.42,0.56)--(0,-0.38);
					\draw[orange,thick](0.65,0.5)--(0.43,-0.74);
					
					\draw[red,thick](-0.25,0.73)--(0,-0.5);
					\draw[red,thick](0.5,0.54)--(0,-0.5);
					\draw[red,thick](0.5,0.54)--(0.41,0.05);
					
					\draw[magenta,thick](-0.5,0.79)--(0,-0.89);
					\draw[magenta,thick](0.37,0.1)--(0,-0.89);
					\draw[magenta,thick](0.55,0.53)--(0.395,-0.1);
					\draw[magenta,thick](0.37,0.1)--(0.395,-0.1);
					
					\draw[Mulberry,thick](0.33,-0.1)--(0.05,-0.88);
					\draw[Mulberry,thick](0.6,0.52)--(0.32,-0.78);
					\draw[Mulberry,thick](0.33,-0.1)--(0.32,-0.78);
					
					\draw[red,thick](-0.37,0.76)--(0,-0.7);
					\draw[red,thick](0.42,0.29)--(0,-0.7);
					\draw[red,thick](0.42,0.29)--(0.41,0.05);
					
					\draw[orange,thick](-0.58,0.81)--(-0.07,-0.93);
					\draw[orange,thick](0.29,-0.3)--(0.12,-0.86);
					\draw[orange,thick](0.29,-0.3)--(0.23,-0.82);	
					
					\draw[OliveGreen,thick](-0.3,0.74)--(0,-0.6);
					\draw[OliveGreen,thick](0.45,0.4)--(0,-0.6);
					\draw[OliveGreen,thick](0.45,0.4)--(0.32,-0.78);
					
			\end{tikzpicture}}
			
		\end{center}
		\caption{The region $s^n_j$ as in the previous figure (not in scale). $\partial^s s^n_j$ is in orange, $E^n_{(p+1)/2}$ is in green, $\alpha^n_p$ is in red.  In the left picture $\alpha^n_p \cap \partial \Delta$ contains 2 V-points, and in the right picture $\alpha^n_p \cap \partial \Delta$ contains 1 V-point.}
		\label{fig:region2}
	\end{figure}
	
	\begin{figure}[ht]
		\begin{center}{\centering
				\begin{tikzpicture}[scale=3.5]
					
					\tikzstyle{dot}=[circle, fill=white, minimum size=0pt, inner sep=0pt, outer sep=-1pt]
					
					\node (n10) at (2*4/9,2*2/9) {};
					\node (n11) at (0.56,-0.7)  {};
					\node (n12) at (-4/3,1)  {};
					\node (n13) at (-0.57,-1.1)  {};
					
					\foreach \from/\to in {n10/n12, n11/n13}
					\draw (\from) -- (\to);	
					
					\node[dot, draw=none, label=above: $Y^n_p$] at (-0.45,1) {};
					\node[dot, draw=none, label=above: $Y^n_{p+1}$] at (-0.05,-0.95) {};
					\node[dot, draw=none, label=above: $Y^n_{p+2}$] at (0.2,-0.88) {};
					\node[dot, draw=none, label=above: $Y^n_{p+3}$] at (0.44,0.78) {};
					\node[dot, draw=none, label=above: $Y^n_{p+4}$] at (0.7,0.72) {};
					\node[dot, draw=none, label=above: $Y^n_{p+5}$] at (0.45,-0.78) {};
					
					\tikzstyle{every node}=[draw, circle, fill=white, minimum size=1.5pt, inner sep=0pt]
					\tikzstyle{dot}=[circle, fill=white, minimum size=0pt, inner sep=0pt, outer sep=-1pt]		
					
					\node (n1) at (-0.45,0.78)  {};
					\node (n2) at (-0.02,-0.91)  {};	
					\node (n3) at (0.03,-0.89)  {};
					\node (n4) at (0.4,0.57)  {};	
					\node (n5) at (0.56,0.53)  {};
					\node (n6) at (0.34,-0.78)  {};	
					
					\draw[orange,thick](-0.15,0.71)--(0,-0.38);
					\draw[orange,thick](0.36,0.58)--(0,-0.38);
					\draw[orange,thick](0.65,0.5)--(0.43,-0.74);
					
					\draw[Magenta,thick](-0.25,0.73)--(0,-0.7);
					\draw[Magenta,thick](0.4,0.56)--(0,-0.7);
					\draw[Magenta,thick](0.4,0.56)--(0.09,-0.87);
					
					\draw[Mulberry,thick](0.45,0.4)--(0.14,-0.85);
					\draw[Mulberry,thick](0.62,0.51)--(0.34,-0.78);
					\draw[Mulberry,thick](0.45,0.4)--(0.34,-0.78);
					
					\draw[red,thick](-0.37,0.76)--(-0.02,-0.9);
					\draw[red,thick](-0.5,0.79)--(-0.02,-0.9);
					
					\draw[orange,thick](-0.58,0.81)--(-0.07,-0.93);
					\draw[orange,thick](0.34,-0.2)--(0.25,-0.81);
					\draw[orange,thick](0.34,-0.2)--(0.2,-0.83);	
					
					\draw[OliveGreen,thick](-0.45,0.78)--(-0.02,-0.9);
					\draw[OliveGreen,thick](0.4,0.56)--(0.03,-0.89);
					\draw[OliveGreen,thick](0.56,0.52)--(0.34,-0.78);
					
			\end{tikzpicture}}
			
		\end{center}
		\caption{The region $s^n_j$ (not in scale). $\partial^s s^n_j$ is in orange, $E^n_{(p+1)/2}$, $E^n_{(p+3)/2}$ and $E^n_{(p+5)/2}$ are in green, $\alpha^n_p$ is in red, $\alpha^n_{p+2}$ is in magenta and $\alpha^n_{p+4}$ is in purple.}
		\label{fig:region3}
	\end{figure}
	Let $\A^{n+1}_p := \{ \alpha \in \A : \alpha \textrm{ separates $Y^n_p$ and $\alpha^n_p$} \} \cup \{ Y^n_p, \alpha^n_p \}$. We call $B^{n+1}_p := \pi(\A^{n+1}_p) \subset \T$ a {\it stem} of $\T$ of {\it level} $n+1$. Let $b^n_p := \pi(\alpha^n_p)$, $b^n_p$ is a branch point and we define its {\it level} to be $n$.
	
	Second, let us suppose that $F^{-n}(E) \cap \partial s^n_j = \{ Y^n_p, \dots , Y^n_{p+r} \}$ and $r > 1$. Note that $(r+1)/2 \le l$. Since the set of V-points of level $n+1$ is dense in each $E^n_{(p+1+2i)/2}$, $i = 0, \dots , (r-1)/2$, there are elements of  $\bigcup_{q=0}^n \A^q$, say $\alpha^n_p, \alpha^n_{p+2}, \dots \alpha^n_{p+r-1} \in$  $\bigcup_{q=0}^n \A^q$, such that for every $k$, $0 \le k \le (r-1)/2$, $\bigcup_{i=0}^{k} \alpha^n_{p+2i}$ separates $Y^n_{p+2k}$ and $\partial^s s^n_j$, and for every $i$, $0 \le i \le (r-1)/2$, $\alpha^n_{p+2i}$ is maximal with respect to this property (see Figure \ref{fig:region3}).
	
	Obviously, $\pi(\alpha^n_{p+2i})$ is a branch point for every $i$. If additionally for some $i$, $\alpha^n_{p+2i}$ separates $Y^n_{p+2i}$ and $\partial^s s^n_j$, then
	$\pi(\alpha^n_{p+2i})$ is a branch point of {\it level} $n$. If $\alpha^n_{p+2i}$ does not separate $Y^n_{p+2i}$ and $\partial^s s^n_j$, then the branch point $\pi(\alpha^n_{p+2i})$ has level $m$ for some $m < n$. Note that $\alpha^n_p$ separates $Y^n_p$ and $\partial^s s^n_j$, and hence $\pi(\alpha^n_p)$ is a branch point of level $n$.
	
	For every $i \in \{  1, \dots , (r-2)/2 \}$ such that $\pi(\alpha^n_{p+2i})$ is a branch point of level $n$ let $\A^{n+1}_{p+2i} := \{ \alpha \in \A : \alpha \textrm{ separates $Y^n_{p+2i}$ and $\alpha^n_{p+2i}$} \} \cup \{ Y^n_{p+2i}, \alpha^n_{p+2i} \}$. We call $B^{n+1}_{p+2i} := \pi(\A^n_{p+2i}) \subset \T$ a {\it stem} of $\T$ of {\it level} $n+1$. 
	
	Note that, by our construction, between any two branch points of level $n \ge 1$ there is at least one branch point of some smaller level. Namely, the stable boundary of every $s^n_j \subset T_n$ contains at least one component which belongs to $\partial^s T_{n-1}$.
	
	Finally, let $\A^{n+1} = \bigcup_i \A^{n+1}_i$, where $i \in \{  1, \dots , (r-2)/2 \}$ such that $\pi(\alpha^n_{p+2i})$ is a branch point of level $n$.
	
	Above we have proved the following lemma. 
	\begin{lem}\label{lem:stem}
		For every component $s^n$ of $T_n \cap \Delta$ there exists 
		$\alpha^n \in \A$ such that $\alpha^n\subset s^n$ and 
		$\pi(\alpha^n)$ is a branch point of level $n$ of $\T$.
	\end{lem}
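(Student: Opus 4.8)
The plan is to extract the lemma from the inductive construction on the preceding pages, making its logical skeleton explicit. Fix $n \in \N_0$ and a component $s^n = s^n_j$ of $T_n \cap \Delta$. The first step is the \emph{crossing fact}: at least one component $E^n_m = [Y^n_p, Y^n_{p+1}]$ of $L^{-n}(E) \cap \Delta$ has both endpoints in $\partial s^n$ and its interior inside $s^n$. For $n = 0$ this is visible directly, since $E = \Delta \cap y$-axis crosses the region $T_0 \cap \Delta$ from a piece of $\partial^s(T_0 \cap \Delta)$ to $\partial^u(T_0 \cap \Delta)$ (Figure \ref{fig:region}). For the inductive step I would use the observation already recorded above that $\partial^s s^n$ contains at least one component lying in $\partial^s T_{n-1}$; this places $s^n$ inside some component $s^{n-1}$ of $T_{n-1} \cap \Delta$, which by the inductive hypothesis is crossed by a segment $E^{n-1}_{m'} \subset L^{-(n-1)}(E)$, and pulling that segment back by $L$ and intersecting with $s^n$ yields the desired $E^n_m$. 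The point throughout is that the $y$-axis segment $E$, and hence each of its backward images, is positioned to enter each such sub-region through its unstable boundary and run across to its stable boundary, just as in the earlier analysis of $\partial^s T_n$ near $X$. This crossing fact is where I expect the real work to lie: one must argue that \emph{every} component $s^n$ carries such a segment, which does not follow from the count $k_n/2 \ge m_n$ alone.

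Given $E^n_m = [Y^n_p, Y^n_{p+1}]$ with endpoints on $\partial s^n$, I would then invoke Remark \ref{rem:dense}: $E^n_m$ lies on the polygonal line $L^{-n}([M^{-1}, N])$, so the V-points of level $n+1$ are dense in $E^n_m$. Sliding such a V-point $V$ from $Y^n_p$ toward the facing part of $\partial^s s^n$, the arc of $W^s_X$ turning at $V$ sweeps across $s^n$; by a continuity argument there is a \emph{first} such arc, call it $\alpha^n_p$, whose interior still meets $\partial^u s^n$ while $\alpha^n_p$ already separates $Y^n_p$ from $\partial^s s^n$ inside $\Delta$. This is exactly the element of $\bigcup_{q=0}^{n} \A^q \subseteq \A$ isolated in the construction (Case 1 of the step for $s^n$; the multi-segment situation of Figure \ref{fig:region3} is handled identically for the component segment adjacent to $Y^n_p$). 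Since $\alpha^n_p$ separates $Y^n_p \in \partial s^n$ from $\partial^s s^n$ within $\Delta$, it lies in $s^n$; set $\alpha^n := \alpha^n_p$.

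Finally I would conclude. Because $\Int \alpha^n \cap \partial \Delta \supseteq \Int \alpha^n \cap \partial^u s^n \ne \emptyset$, the characterization of branch points recorded above ($\pi(\alpha)$ is a branch point if and only if $\Int \alpha \cap \partial \Delta \ne \emptyset$, i.e.\ $c(\alpha) > 2$) shows that $\pi(\alpha^n)$ is a branch point; and its level is exactly $n$, since it is the branch point $b^n_p = \pi(\alpha^n_p)$ produced at the $n$-th stage and separating $Y^n_p$ from $\partial^s s^n$, which is the defining property assigning level $n$. Thus $\alpha^n \in \A$, $\alpha^n \subset s^n$, and $\pi(\alpha^n)$ is a branch point of level $n$, as required. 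The one subtlety to double-check is that in Case 2, when $s^n$ meets $L^{-n}(E)$ in several segments, the arc attached to the endmost point $Y^n_p$ is the one that genuinely separates $Y^n_p$ from $\partial^s s^n$ and hence has level $n$; this is guaranteed because $\partial^s s^n$ contains a component of $\partial^s T_{n-1}$ lying ``behind'' $Y^n_p$.
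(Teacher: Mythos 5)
Your overall skeleton does match the paper's: Lemma \ref{lem:stem} is proved there by the inductive construction that precedes it (the segments $E^n_j$ of $L^{-n}(E)\cap\Delta$, the density of V-points of level $n+1$ from Remark \ref{rem:dense}, the arc $\alpha^n_p$ separating $Y^n_p$ from $\partial^s s^n_j$ with $\Int\alpha^n_p\cap\partial^u s^n_j\neq\emptyset$, the branch-point criterion $c(\alpha)>2$, and the level bookkeeping), and you reproduce all of this faithfully, including the observation that the relevant arc lies in $s^n$ because it cannot cross the stable boundary arcs or $\partial\Delta$. You are also right that the ``crossing fact'' (every component $s^n_j$ of $T_n\cap\Delta$ contains a full crossing segment $E^n_m$ with $\partial E^n_m\subset\partial\Delta$) is the geometric input on which everything rests; the paper essentially asserts it (it is what underlies the inequality $k_n/2\ge m_n$ and the case analysis) rather than proving it.

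The genuine gap is in your proposed proof of that crossing fact. The inductive step rests on the claim that, because $\partial^s s^n$ contains a component lying in $\partial^s T_{n-1}$, the component $s^n$ sits \emph{inside} some component $s^{n-1}$ of $T_{n-1}\cap\Delta$. That implication is false in general: sharing a stable boundary arc with $\partial^s T_{n-1}$ only says $s^n$ is adjacent to that arc, not on which side of it $T_{n-1}$ lies, and the regions $T_{n-1}$ and $T_n$ are \emph{not} nested. Indeed, the proof of Lemma \ref{lem:gamma} uses exactly the opposite phenomenon: the points $A^{-n}$ alternate around $X$ along $W^u_X$, so near $X$ the regions $T_{n-1}$ and $T_n$ lie on opposite sides of $W^s_X$ (whence $s^{n}\cap s^{n-1}=\emptyset$ for the components adjacent to a fixed $\gamma$); in particular the component of $T_n\cap\Delta$ abutting $[X,W]$ has a boundary component in $\partial^s T_{n-1}$ but is not contained in $T_{n-1}$. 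The containment that does hold goes the other way: $L(s^n)$ is connected and contained in $T_{n-1}\cap\Delta$, hence in a unique component $s^{n-1}$. But with that corrected relation your pullback step also fails to close: $L(s^n)$ may be a proper subset of $s^{n-1}$ missing the segment $E^{n-1}_{m'}$ supplied by the inductive hypothesis, so $L^{-1}(E^{n-1}_{m'})\cap s^n$ can be empty, and nothing forces a component of $L^{-n}(E)\cap\Delta$ to land in $s^n$ this way. So the crossing fact needs a different argument --- for instance one showing that $L^{-n}(E)$ (which is the homeomorphic image under $L^{-n}$ of the arc $E$ joining $\partial^u T_0$ to $\partial\Delta$ inside $T_0\cap\Delta$) enters each component $s^n_j$ transversally through its unstable boundary $\partial^u s^n_j\subset\partial\Delta$ and then must run across $s^n_j$ until it exits through $\partial\Delta$ again; as written, your induction does not establish this.
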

	
	
	\begin{proofof} \emph{of Theorem \ref{thm:dense}.}
		Let $\gamma \in \Gamma$. Let us suppose that
		$\pi(\gamma)$ is not a branch point of $\T$.
		Let $\eps > 0$ and let $U_{\eps}$ be the
		open $\eps$-neighborhood of $\gamma$ in
		$\Delta$. By Lemma \ref{lem:gamma}, there
		exists $n \in \N$ and a component $s^n$ of 
		$T_n \cap \Delta$ such that $\gamma$ is a
		component of $\partial^s s^n$ and 
		$s^n \subset U_{\eps}$. By Lemma \ref{lem:stem} there exists an
		$\alpha^n \in \A$ such that $\alpha^n\subset s^n$ and 
		$\pi(\alpha^n)$ is a branch point of $\T$.
		Since $\Gamma$ is a dense subset of $\A$, the
		set of branched points is dense in $\T$.
	\end{proofof}
	
	\subsection{Dense branching for trees arising from the H\'enon attractors}	

	The analogues of points of $W^u_X \cap E$ of the Lozi-like maps are critical points of the H\'enon maps. The set of critical points $\mathcal{C}$ is introduced and studied in detail in \cite{WY}. It is shown in \cite{BS2} that $\mathcal{C}$ lies on an arc $\K$ (constructed in \cite{BS2}), and for the H\'enon maps the arc $\K$ plays the role of the arc $E = \Delta \cap \kappa$. Note that all points of $H_{a,b}^{-n}(\K) \cap \partial^u D$, $n \in \N$, are preimages of some critical points and these points are the analogues of points $Y^n_q$.
	
	Let $\Omega$ be a region defined similarly as in \cite[Proposition 4.2., Figure 6]{MV}, we indicate that region in Figure \ref{fig:omega} below, where it is bounded by the red and blue arcs. For us $\Omega$ is a (closed) disc (not open as in \cite{MV}) such that $\partial \Omega \subset W^u_P \cup W^s_P$. Also, if we denote by  $\gamma_P \in \Gamma'$ the arc of $W^s_P$ that has $P$ as a boundary point, and if $W'$ denotes the other boundary point of $\gamma_P$, that is $\gamma_P = [X, W']^s$, then $\partial^u \Omega := [P, H_{a,b}^{-1}(W')]^u$ and $\partial^s \Omega := [P, H_{a,b}^{-1}(W')]^s$, each of them has only one component (not two as in \cite{MV}). Therefore $\Omega$ is not contained in $D$ as in \cite{MV}, but conclusion of \cite[Proposition 4.2]{MV} holds.
	
		\begin{figure}[h]
		
		\begin{adjustbox}{center}
			\resizebox{0.6\textwidth}{!}{
				
				\begin{tikzpicture}
					
					\tikzstyle{every node}=[draw, circle, fill=white, minimum size=2pt, inner sep=0pt]
					\tikzstyle{dot}=[circle, fill=white, minimum size=0pt, inner sep=0pt, outer sep=-1pt]
					
					\node (n1) at (5*4/3,0) {};
					\node (n2) at (-6.65,5*2/3)  {};
					\node (n3) at (-4.74,-3.72)  {};
					\node (n4) at (5*5/9,0)  {};
					\node (n5) at (-4.59,-1.76) {};
					\node (n6) at (-3.98,-2.1)  {};
					\node (n7) at (-4.61,2.84)  {};
					\node (n8) at (5*61/51,0)  {};
					\node (n9) at (5*65/75,0)  {};
					
					\node (n10) at (5*4/9,1.87)  {};
					
					\node (n21) at (0.04,2.32)  {};
					\node (n22) at (-0.05,1.06)  {};
					\node (n23) at (0.05,-0.74)  {};
					\node (n24) at (-0.05,-1.28)  {};
					\node (n25) at (0.05,-2.08)  {};
					\node (n26) at (0,-2.74)  {};
					
					\node (n27) at (0,1.89)  {};
					\node (n28) at (-0.01,1.41)  {};
					
					\node (n11) at (-2.06,2.74)  {};
					\node (n12) at (1.4,-2.39)  {};
					
					\draw (-6.65,5*2/3)--(-4.74,-3.72);
					
					\draw[TealBlue,thick](-0.05,1.06)--(0.05,-0.74);
					\draw[TealBlue,thick](0.05,-0.74)--(-0.05,-1.25);
					\draw[TealBlue,thick](-0.05,-1.25)--(0.05,-2.08);
					\draw[TealBlue,thick](n25)--(n26);
					\draw[TealBlue,thick](n27)--(n28);
					\draw[TealBlue,thick](n21)--(n27);
					\draw[TealBlue,thick](n28)--(n22);
					
					\draw (3,1.7) .. controls (7.7,0.7) and  (7.7,-0.6).. (4,-1.7);	
					\draw (3,1.7) .. controls (-9,4.25) and (-9,3.7) .. (-1.5,1.5);
					\draw (0,-2.1) .. controls (-6.9,-4.1) and (-6.9,-4.6) .. (4,-1.7);
					\draw (2.8,1.2) .. controls (7.44,0) and (7.44,-0.1) .. (0,-2.1);
					\draw (2.8,1.2) .. controls (-7,3.7) and (-7,3.2) .. (2.3,0.8);
					\draw (2.3,0.8) .. controls (5.63,-0.2) and (5.63,0.1) .. (-5*19/24+0.5,-5*5/12);
					\draw (-1.5,1.5) .. controls (4.55,-0.4) and (4.55,0.3) .. (-5*5/6+0.5,-5*1/3+0.15);
					\draw (-5*5/6+0.5,-5*1/3+0.15) .. controls (-5*5/6-0.7,-5*1/3-0.08) and (-5*5/6-0.7,-5*1/3-0.18) .. (-5*5/6+0.2,-5*1/3-0.05);
					\draw (-5*19/24+0.2,-5*5/12+0.1) .. controls (-5*19/24-0.15,-5*5/12) and (-5*19/24-0.15,-5*5/12-0.1) .. (-5*19/24+0.5,-5*5/12);
					
					\draw[red,thick] (5*4/9,1.87) .. controls (0.7,-7.7) and  (-0.7,-7.6) .. (-5*4/9,3.87);	
					
					\draw[blue,thick] (5*4/9,1.87) -- (-2.06,2.74);
					
					\node[dot, draw=none, label=above: $P$] at (2.25,2.5) {};
					\node[dot, draw=none, label=above: $W^u_P$] at (5,2) {};
					\node[dot, draw=none, label=above: $\Omega$] at (0.1,-3.6) {};
					\node[dot, draw=none, label=above: $\K$] at (0.25,0.5) {};
					\node[dot, draw=none, label=above: $\gamma_P$] at (1.65,0.5) {};
					\node[dot, draw=none, label=above: $W^s_P$] at (-2.7,4) {};
					\node[dot, draw=none, label=above: $H^{-1}_{a,b}(W')$] at (-1.1,4)  {};
					\node[dot, draw=none, label=above: $W'$] at (1.55,-2.39)  {};
					
				\end{tikzpicture}
			}
		\end{adjustbox}
		\vspace{-2cm}
		\caption{The region $\Omega$ with $\partial^u \Omega$ in blue and $\partial^s \Omega$ in red. The arc $\K$ is teal.}
		\label{fig:omega}
	\end{figure}
	
	\begin{proof}[Proof of Theorem \ref{cor:dense}.]  By using  (A'), (B') and (C'), $D$ as an analog of $\Delta$, $\Omega$ as an analog of $T_0$, and the critical points and their preimages as the analog of points $Y^n_q$, the proof of Theorem \ref{cor:dense} is practically the same as the proofs of Lemma \ref{lem:stem} and Theorem \ref{thm:dense}, subject only to obvious modification of terminology and notation.
	\end{proof}

		\subsection{Densely branching trees are optimal}
		
In this subsection we show that our results are optimal by proving Theorem \ref{thm:LO}. 

\begin{proof}[Proof of Theorem \ref{thm:LO}]
	Let $F$ be a Lozi-like map that satisfies $(L3)$ and $(L4)$. Note that $Per(F)$, the set of periodic points of $F$, is dense in $\Lambda_F$ (due to density of homoclinic intersections). Let us suppose, by contradiction, that there exists a tree $\T$ whose set of branch points is not dense, and $f : \T \to \T$ such that $(\invlim (\T, f), \hat f)$ is conjugate to $(\Lambda_F, F)$. Let also $\pi:\Lambda_F\to\T$ be the semi-conjugacy for $f$. Then $\pi(Per(F))\subset Per(f)$ and $\pi(Per(F))$ is dense in $\T$. Note that given any $\alpha\in\A$ the image $\pi(\alpha)$ must be a point in $\T$. Indeed, if $x,y\in\alpha$ were such that $\pi(x)\neq \pi(y)$ then $\pi(\alpha)$ is a nondegenerate subcontinuum of $\T$, and hence it contains an open set. Therefore there exist points $z,z'\in\alpha$ and $t_o,t_o'\in \pi(Per(F))$ such that $\pi(z)=t_o$ and  $\pi(z')=t_o'$ are distinct periodic points of $f$. Let $\epsilon=\min\{d(f^n(t_o),f^n(t_o')):n\in\mathbb{N}\}$ and $\delta>0$ be chosen from continuity of $\pi$ for $\epsilon/10$. Since $\diam (F^n(\alpha))\to_{n\to\infty}0$ it follows that there exists an $n$ such that $d(F^n(z),F^n(z'))<\delta$. Consequently $d(\pi(F^n(z)),\pi(F^n(z'))=d(f^n(t_o),f^n(t_o'))<\epsilon/10$; a contradiction. 
	
	By the above, and since $\T$ contains an arc with finitely many branch points, there exist a point $t \in \T$ and an infinite collection $A = \{ \alpha_i, i \in \N \} \subset \A$, $\alpha_i \ne \alpha_j$ for every $i \ne j$, such that $\pi(\alpha_i) = t$ for every $i \in \N$.
	
	Let $j, j' \in \N$, $j \ne j'$, $Q \in \alpha_{j}$, $Q' \in \alpha_{j'}$, and let $\bar q, \bar q' \in \invlim (\T, f)$ be such that $\Pi (Q) = \bar q$ and $\Pi (Q') = \bar q'$. Since $\pi(Q) = \pi(Q') = t$, it follows that $q_0 = q'_0 = t$ and consequently $f^i(q_0) = f^i(q'_0) = f^i(t)$ for every $i \in \N$. Therefore, for every open cover $\U$ of $\invlim (\T, f)$, there exists a $k \in \N$ such that for every $i \ge k$ we have $\hat f^i(\bar q), \hat f^i(\bar q') \in U$ for some $U \in \U$. Since $\Pi$ is a conjugacy, it follows that $d(Q^i, Q'^i) \to 0$ as $i \to \infty$, that is $Q' \in W^s_{Q}$, and consequently $\alpha _i \subset W^s_{Q}$ for every $i \in \N$. We have the following three cases. 
	
	{\bf Case 1.} First, let us suppose that $W^s_Q = W^s_X$ and let $\alpha_X \in \A$ be such that $X \in \alpha_X$. Recall that $\pi(\alpha_X) = x$ and $x$ is the only fixed point of $f$. Since for any $\alpha \in \A$ there are finitely many $\beta_i \in \A$ such that for every $i$, $F(\beta_i) \subset \alpha$, there exist a sequence $(\alpha_{n_i})_i$, $\alpha_{n_i} \in A$, and a strictly increasing sequence $(r_i)_i$, $r_i \in \N$, such that $F^{r_i}(\alpha_{n_i}) \subset \alpha_X$ and $F^{r_i-1}(\alpha_{n_i}) \cap \alpha_X = \emptyset$. In fact, if $J = \alpha_X \, \cap(\mathbb{R}\times(-\infty,0])$ then $J$ is a closed arc and  $F^{r_i}(\alpha_{n_i}) \subset J$, for every $i \in \N$. Since for every $i > 1$, $r_i > r_1$, $\pi(\alpha_{n_i}) = t$ and $f^{r_1}(t) = x$, for any point $P \in F^{r_i}(\alpha_{n_i}) \subset J$ and $\Pi(P) = \bar p = (p_0, p_1, \dots , p_{r_i}, \dots )$ we have $p_{r_i} = t$, $p_{r_i - r_1} = f^{r_1}(t) = x$ and hence $p_j = x$ for every $j = 0, \dots , r_i - r_1$. Recall that $r_i \to \infty$ when $i \to \infty$, and $J$ is a closed arc, so there exists a point $P' \in J$ such that $\Pi(P') = (p'_0, p'_1, \dots , p'_j, \dots )$ and $p'_j = x$ for every $j \in \N$. This is in contradiction with the fact that $\Pi$ is a conjugacy and $\Pi(X) = (x, x, \dots , x, \dots )$. Therefore, $W^s_Q \ne W^s_X$.
	
	{\bf Case 2.} In the same way, using $F^n$ instead of $F$, we can prove that $W^s_Q \ne W^s_P$ for every periodic point $P$ of period $n$, and for every $n$. Therefore, $W^s_Q \ne W^s_{Q^{-j}} \ne W^s_{Q^{-k}}$ for every $j, k \in \N$, $j \ne k$.
	
	{\bf Case 3.} Now let us suppose that there does not exist a periodic point $R$, such that $Q\in W^s_R$. Let us fix $t_1 \in f^{-1}(t)$. Then for every $\alpha_i \in A$ there exists $\alpha_i^1 \in  \A$, $\alpha_i^1 \subset W^s_{Q^{-1}}$, such that $F(\alpha_i^1) \subset \alpha_i$ and $\pi(\alpha_i^1) = t_1$. Now we fix $t_2 \in f^{-1}(t_1) \subset f^{-2}(t)$, and again for every $\alpha_i^1$ there exists $\alpha_i^2 \in  \A$, $\alpha_i^2 \subset W^s_{Q^{-2}}$, such that $F(\alpha_i^2) \subset \alpha_i^1$ and $\pi(\alpha_i^2) = t_2$. Note that $F^2(\alpha_i^2) \subset \alpha_i$. We proceed inductively and fix a sequence $(t_m)_m$ such that for every $m \in \N$, $t_m \in f^{-1}(t_{m-1}) \subset f^{-m}(t)$. Now for every $i \in \N$ there exists a sequence $(\alpha_i^m)_m$ such that for every $m \in \N$ the following holds: $\alpha_i^m \in \A$, $\alpha_i^m \subset W^s_{Q^{-m}}$,  $F(\alpha_i^m) \subset \alpha_i^{m-1}$ and $\pi(\alpha_i^m) = t_m$. Note that $F^m(\alpha_i^m) \subset \alpha_i$ for every $m \in \N$. Also, $\alpha_i \cap \alpha_j = \emptyset$ for $i \ne j$ implies that $\alpha_i^m \cap \alpha_j^m = \emptyset$ for $i \ne j$ and every $m \in \N$. Since every $\alpha \in \A$ is compact, there exists $P_i \in \bigcap_{m \in \N} F^m(\alpha_i^m) \subset \alpha_i$ for every $i \in \N$. Note that $P_i \in \Lambda_F$ for every $i \in \N$. Moreover, $P_i \ne P_j$ for $i \ne j$ and $\Pi(P_i) = (t, t_1, t_2, \dots , t_m, \dots )=\Pi(P_j)$ for every $i,j \in \N$, contradicting the fact that $\Pi$ is a conjugacy. Therefore, there exists no tree $\T$ whose set of branch points is not dense and $f : \T \to \T$ such that $(\invlim (\T, f), \hat f)$ is conjugate to $(\Lambda_F, F)$.
\end{proof}
The proof also works verbatim for the H\'enon-like maps.

	\subsection{Remarks}	
	
	From everything above, a few very interesting questions arise. Recall, a branch point $b \in \T$ has \emph{order} $n$ (where $n \ge 3$) if $\T \setminus \{ b \}$ consists of $n$ components.
	
	Are there parameters $(a,b) \in \M$ such that all branch point of the metric tree $\T$ for the Lozi map $L_{a,b}$ have order three? If yes, is there a metric $d$ on $\T$ (related to the dynamics of $L_{a,b}$) such that $(\T, d)$ is the \emph{continuum self-similar tree} (see \cite{BM} and \cite{BT}).
	
	Similarly, are there parameters $(a,b) \in \M$ such that the metric tree $\T$ for the Lozi map $L_{a,b}$ has at least one branch point of order greater than three? In this case, is there an upper bound on the order of branch points?
	
	\begin{figure}[ht]
		\centering		\includegraphics[width=15cm,height=15cm]{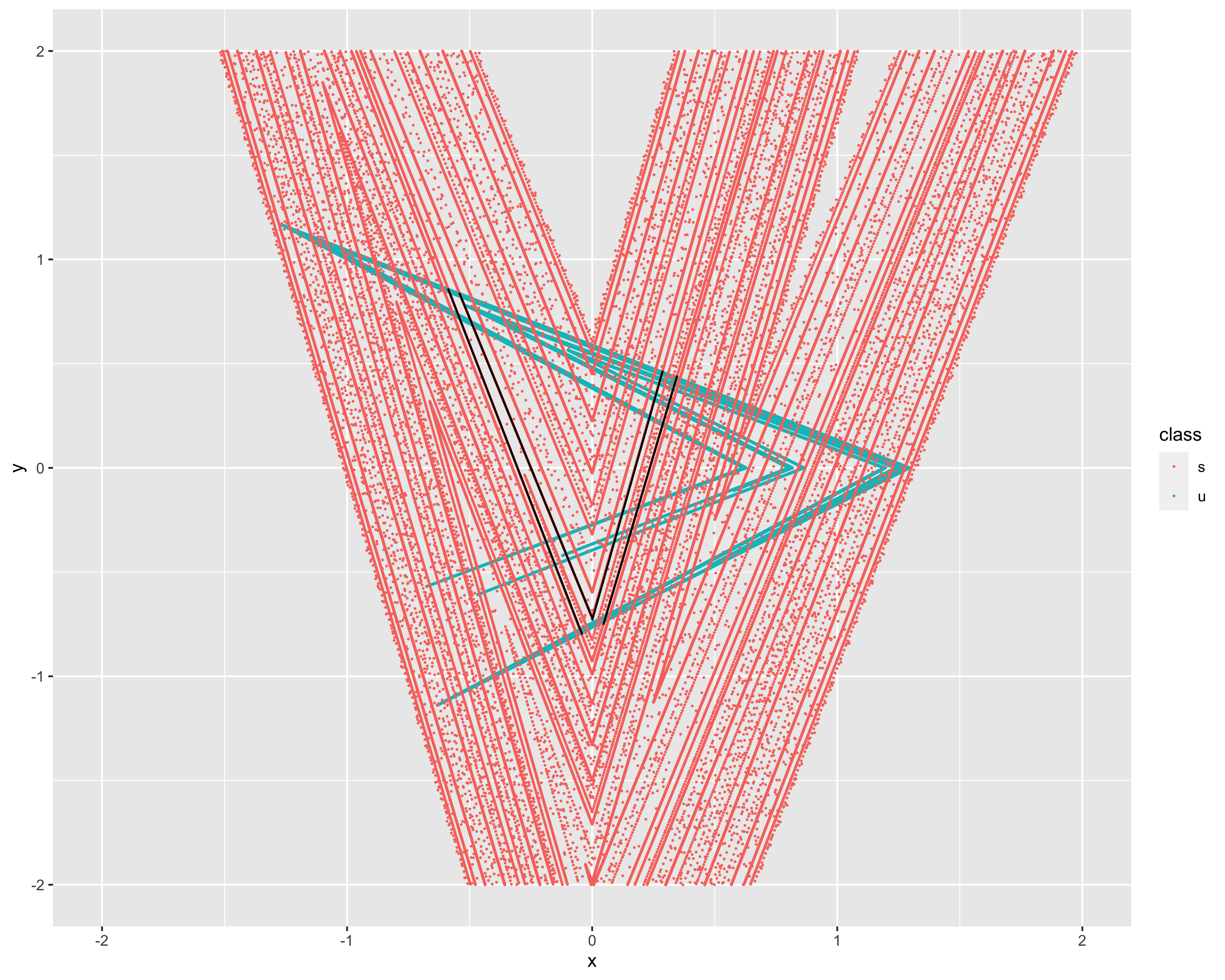}
		\caption{}
		\label{fig:NL}
	\end{figure}	
	Figure \ref{fig:NL} shows a computer image of (parts of)
	the stable (in red) and the unstable (in blue) manifolds of the fixed point $X$ of the Lozi map $L_{a,b}$ for $a = 1.75$ and $b = 0.45$. The stable boundary of a region $s$ is given in black. It seems that if there exists $\alpha \in \A$ such that $\pi(\alpha)$ is a branch point of order four, then that $\alpha$ would belong to a region such as $s$.
	
	The same questions can be asked for the H\'enon maps with parameters in the set $\mathcal{WY}$.

	\section{More on topology of Lozi and H\'enon strange attractors}\label{top}
	
	In this section we show one additional feature of strange attractors, which is topological indecomposability. Recall that a continuum $\mathcal{K}$ is indecomposable if given two distinct continua $\mathcal{K}',\mathcal{K}''\subset \mathcal{K}$ such that $\mathcal{K}'\cup\mathcal{K}''=\mathcal{K}$ we must have $\mathcal{K}'=\mathcal{K}$ or $\mathcal{K}''=\mathcal{K}$. There are several known examples of topologically indecomposable attractors in the literature, such as Smale's attracting solenoids, Plykin attractors, and DA-attractors. Any indecomposable continuum $\mathcal{K}$ admits the following decomposition. For any $x\in \mathcal{K}$, the \textit{composant} of $x$, denoted  $C(x)$, is the union of all proper subcontinua in $\mathcal{K}$ which contain $x$. We have
	\begin{itemize}
		\item if $x,y$ are points in $\mathcal{K}$, then either  $C(x)=C(y)$ or  $C(x)\cap C(y) = \emptyset$,
		\item there exists uncountably many composants,
		\item each composant is a dense first-category connected set in $\mathcal{K}$.
	\end{itemize}
	We start with the following lemma.
	\begin{lem}
		Suppose $H:\mathbb{R}^2\to\mathbb{R}^2$	is a homeomorphism, with a fixed point $p$. If for the unstable set $W^u_p$ of $p$ we have:
		\begin{enumerate}
			\item $W^u_p$ is a bounded subset of $\mathbb{R}^2$ which is a continuous one-to-one image of $\mathbb{R}$, 
			\item 
			$\Lambda=\Cl(W^u_p)$ does not separate the plane, and
			\item there exists a point $c\in W^u_p$, such that $\{H^n(c)\}_{n=0}^{\infty}$ is dense in $W^u_p$,
		\end{enumerate}
		then $\Lambda$ is indecomposable. 
	\end{lem}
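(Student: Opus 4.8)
The plan is to show that $\Lambda = \Cl(W^u_p)$ cannot be written as the union of two proper subcontinua by exploiting the classical characterization: a continuum $\mathcal{K}$ is indecomposable if and only if it has a composant that is not all of $\mathcal{K}$ and, more usefully here, if and only if there is a point whose composant is a proper dense subset. Concretely, I would prove that $W^u_p$ itself is a dense proper subset of $\Lambda$ contained in a single composant, and then argue that no proper subcontinuum of $\Lambda$ can contain a ``large'' piece of $W^u_p$ --- in particular that the composant of $c$ equals the whole of $\Lambda$ if $\Lambda$ were decomposable, forcing a contradiction with density of the orbit.

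First I would set up the topology: by hypothesis (1), $W^u_p$ is the one-to-one continuous image of $\mathbb{R}$, so it is an arc-like ray in both directions from $p$, and is dense in $\Lambda$ by definition of closure; by (2), $\Lambda$ is a non-separating plane continuum, hence by Moore's theorem it is a continuum with the property that every subcontinuum is also non-separating (this rules out circles as subcontinua and gives us a tree-like / arc-connectivity handle). The key structural step is: \emph{every proper subcontinuum of $\Lambda$ has empty interior in $\Lambda$}. Suppose not; let $\mathcal{K}' \subsetneq \Lambda$ be a proper subcontinuum with nonempty interior $U$. Since $W^u_p$ is dense, $U$ meets $W^u_p$, so $U$ contains a nondegenerate subarc $J$ of $W^u_p$. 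Now use (3): the forward orbit $\{H^n(c)\}$ is dense in $W^u_p$, hence dense in $\Lambda$. Pick $n_0$ with $H^{n_0}(c) \in J \subset \mathcal{K}'$. Then for every $m \ge 0$, applying $H^m$ to a subcontinuum and using that $H$ is a homeomorphism, $H^m(\mathcal{K}')$ is a subcontinuum of $\Lambda$ containing $H^{n_0+m}(c)$; but I want the \emph{reverse} inclusion direction, so instead I would observe that the arc of $W^u_p$ from $p$ out to $H^{n_0}(c)$ is a subcontinuum $A_{n_0}$ of $\Lambda$, and that $W^u_p = \bigcup_{n} A_n$ where $A_n$ is the initial segment up to $H^n(c)$ --- but this only covers the orbit, not all of $W^u_p$. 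So the cleaner route is: $\mathcal{K}' \cup A$ is a subcontinuum for any subarc $A$ of $W^u_p$ meeting $\mathcal{K}'$, and by taking ever-longer initial arcs of $W^u_p$ (which all meet $\mathcal{K}'$ once $J \subset \mathcal{K}'$, since $W^u_p$ is connected and $J$ is a piece of it), the union $\mathcal{K}' \cup W^u_p$ is a subcontinuum; taking closures, $\mathcal{K}' \cup \Lambda = \Lambda$, giving no contradiction yet --- so I must instead show $\mathcal{K}' \supseteq W^u_p$. For this: $\mathcal{K}' \cap W^u_p$ is a relatively open (contains $J$'s interior, and by invariance-under-long-arcs it is also closed in $W^u_p$ because $W^u_p$ is an injective image of $\R$ and $\mathcal{K}'$ is closed, so $\mathcal{K}' \cap W^u_p$ is a closed connected subset of the line containing an interval) --- the point is that if a closed subcontinuum contains a nondegenerate arc of the line $W^u_p$, it need not contain all of it, so I actually need the orbit density: since $\{H^n(c)\}$ is dense in $W^u_p$ and $\Lambda$ is the closure, and $\mathcal{K}'$ has interior hence contains some $H^{n_0}(c)$, I apply $H^{-n_0}$ to conclude $c \in H^{-n_0}(\mathcal{K}')$, which is a subcontinuum with interior; replacing $\mathcal{K}'$ by this, WLOG $c \in \text{int}\,\mathcal{K}'$. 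Then $\mathcal{K}' \cup \bigcup_{n\ge 0} (\text{arc of } W^u_p \text{ from } p \text{ to } H^n(c))$ --- each arc meets $\mathcal{K}'$ at $H^n(c)$ once $H^n(c)$ lands in $\mathcal{K}'$, but it may not. The honest fix: use that $\mathcal{K}'$ open $\ni c$ implies $\mathcal{K}'$ contains a whole subarc around $c$, whose forward images under $H$ are subarcs of $W^u_p$ around $H^n(c)$; these forward images need not be in $\mathcal{K}'$. I would therefore instead invoke the standard lemma (Kuratowski): if a continuum $\Lambda$ is irreducible between no two points is the wrong frame --- rather, the correct classical fact is that \textbf{a continuum which is the closure of a dense one-to-one continuous image of $[0,\infty)$ (a ``ray'') whose only limit set is all of $\Lambda$ is indecomposable}, essentially because $\Lambda$ then has the property that it is irreducible and every subcontinuum with interior must be all of $\Lambda$ --- this last implication follows because the ray, being dense and ``winding everywhere'', cannot have an initial segment with interior.

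Thus the real core, and \textbf{the main obstacle}, is proving: \emph{no proper subcontinuum of $\Lambda$ contains an open subset of $\Lambda$}. I expect the author handles this exactly via the orbit-density hypothesis (3): if $\mathcal{K}' \subsetneq \Lambda$ had interior, then $\text{int}\,\mathcal{K}'$ meets $W^u_p$ in an open arc $J$; since $\{H^n(c)\}_{n\ge 0}$ is dense in $\Lambda$, infinitely many $H^n(c)$ lie in $J$, say $H^{n_1}(c), H^{n_2}(c), \dots$ with $n_k \to \infty$; now here is the trick --- the arc of $W^u_p$ between $H^{n_k}(c)$ and $H^{n_{k+1}}(c)$ is the $H$-image (iterate $H^{n_k}$) of the arc between $c$ and $H^{n_{k+1}-n_k}(c)$, and since consecutive returns make this arc sweep out more and more of $W^u_p$, one shows $\mathcal{K}' \supseteq \bigcup_k [\,\text{arc from } H^{n_k}(c) \text{ to } H^{n_{k+1}}(c)\,]$, provided consecutive such arcs overlap (they do, sharing endpoint $H^{n_{k+1}}(c) \in J \subset \mathcal{K}'$), so $\mathcal{K}'$ contains a connected union of arcs of $W^u_p$; if this union is a tail of $W^u_p$ (which it is, being $\{H^n(c): n \ge n_1\}$'s ``arc-hull'' and this is dense in $W^u_p$ since the orbit is), then $\mathcal{K}' \supseteq \Cl(W^u_p) = \Lambda$, contradiction. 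Once \emph{no proper subcontinuum has interior} is established, decomposability $\Lambda = \mathcal{K}' \cup \mathcal{K}''$ with both proper is impossible, since by the Baire category theorem one of $\mathcal{K}', \mathcal{K}''$ must have nonempty interior in $\Lambda$; this is the clean finish. I would also remark that hypothesis (2), non-separation of the plane, is what guarantees we are genuinely in the setting of tree-like (arc-like) continua where these arguments about ``arcs of $W^u_p$ being subcontinua'' behave correctly and no circle subcontinua sneak in.
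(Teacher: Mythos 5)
Your overall strategy is the same as the paper's: reduce indecomposability to the statement that every proper subcontinuum of $\Lambda$ has empty interior, then use orbit points lying deep inside a putative subcontinuum $\mathcal{K}'$ with interior to force a dense piece of $W^u_p$ into $\mathcal{K}'$. But at the decisive step there is a genuine gap: you never prove that the arc of $W^u_p$ joining two orbit points of $\mathcal{K}'$ is itself contained in $\mathcal{K}'$. The parenthetical you offer (consecutive arcs overlap because they share an endpoint in $J\subset\mathcal{K}'$) only shows that the union of those arcs is connected; it does not show the arcs lie in $\mathcal{K}'$, and for a general plane continuum this claim is false (in a disk, a subcontinuum containing two points of a dense injective ray need not contain the ray-arc between them). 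This is exactly where the paper uses hypothesis (2): $\Lambda$ is a one-dimensional non-separating plane continuum, hence tree-like and hereditarily unicoherent, so $\mathcal{K}'\cap[\text{arc}]$ is a subcontinuum of the arc containing both endpoints and therefore equals the arc. You allude to a ``tree-like / arc-connectivity handle'' early on but never deploy it where it is needed, and the ``classical fact'' you fall back on --- that the closure of a dense ray whose limit set is everything must be indecomposable --- is false without such a unicoherence hypothesis: an injective ray can be dense in the square $[0,1]^2$, which is a decomposable, non-separating plane continuum.

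A second, smaller omission: you need the orbit points chosen in $J$ to run off to infinity in the parametrization of $W^u_p$, so that the arcs between them sweep out a tail of the ray whose closure is all of $\Lambda$. The paper secures this by passing to $H^2$ on one branch of $W^u_p\setminus\{p\}$: there $H^2$ is an orientation-preserving homeomorphism, so the parameters of the orbit points are monotone; hence each initial arc $\varphi([0,n])$ contains only finitely many orbit points and is nowhere dense in $\Lambda$, and a subcontinuum with interior must contain orbit points beyond every $\varphi([0,n])$. Your version (``consecutive returns make this arc sweep out more and more of $W^u_p$'', the arc-hull is ``dense in $W^u_p$ since the orbit is'') asserts rather than proves this, and the density of the full orbit does not by itself make the arc-hull of the sub-orbit hitting $J$ dense. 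Your finishing move (a proper subcontinuum in a decomposition has nonempty interior, via Baire or directly) is fine and matches the equivalence the paper cites from Hocking--Young.
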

	\begin{proof}
		Let $W^{u+}_p$ be the component of $W^{u}_p\setminus\{p\}$ that contains $\{H^{2n}(c)\}_{n=0}^{\infty}$. Note that $\Lambda$ is not an arc, since then $H|_{\Lambda}$ would not have been transitive. Let $\varphi:\mathbb{R}\to W^u_p$ be a parametrization of $W^u_p$ with $\varphi(0)=p$. We shall show that each proper subcontinuum of $\Lambda$ is nowhere dense in $\Lambda$, which is equivalent to indecomposability of $\Lambda$ (see \cite{HY}). Recall that any 1-dimensional nonseparating plane continuum is tree-like, and therefore hereditarily unicoherent; i.e.\ for any two subcontinua $A$ and $B$ we have that $A\cap B$ is a continuum (see e.g. \cite{FM}). 
		
		First observe that for every $n\in\mathbb{N}$ the arc $A_n=\varphi([0,n])$ contains only finitely many points from the orbit $\{H^{2n}(c)\}_{n=0}^{\infty}$. This is because $|\varphi^{-1}(H^{2i}(c))|<|\varphi^{-1}(H^{2j}(c))|$ for each $i<j$, since $H^2|_{W^{u+}_p}$ is an orientation-preserving homeomorphism. Consequently, each $A_n$ is nowhere dense in $\Lambda$; i.e.\ for every $x\in A_n$, and each open set $U_x$ around $x$, the neighborhood $U_x$ contains infinitely many points from  $\{H^n(c)\}_{n=0}^{\infty}$, so $U_x$ cannot be contained in $A_n$. 
		
		Suppose that $K$ is a subcontinuum of $\Lambda$ and that there exists an open set $U\subset \mathbb{R}^2$ such that $U\cap\Lambda\subset K$. Since for every $n\in\mathbb{N}$ the arc $A_n=\varphi([0,n])$ is a nowhere dense closed subset of $\mathbb{R}^2$, we must have that $U\cap (W^{u+}_p\setminus A_n)\neq\emptyset$. Consequently for every $n\in\mathbb{N}$ there exists an $H^{k_n}(c)\in K\cap (W^{u+}_p\setminus A_n)$. But since $\Lambda$ is unicoherent, for any $i,j\in\mathbb{N}$ the subarc in $W^{u+}_p$ with endpoints $H^{k_i}(c)$ and $H^{k_j}(c)$ is contained in $K$. It follows that $W^{u+}_p\subset K$ and so $\Lambda=\Cl(W^{u+}_p)\subset K$. This shows that the only subcontinuum of $\Lambda$ with nonempty interior is $\Lambda$ itself and completes the proof.
	\end{proof}
	In \cite{B}	Barge proved indecomposability of attractors of some $C^1$ diffeomorphisms under different assumptions and using different approach. 
	
	In \cite{M} Misiurewicz showed that $L_{a,b}|_{\Lambda}$ is mixing, for $(a,b) \in \M$, and this automatically implies transitivity. Below we extract from \cite{M} the location of some of the points with dense orbits.
	\begin{lem}
		For any segment $I\subset W^u_X$ there exists a point $c\in I$ such that $\{L^n(c):n\in\mathbb{N}\}$ is dense in $\Lambda$.
	\end{lem}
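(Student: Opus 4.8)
The plan is to run a Baire category argument inside the arc $I$, fed by the ``spreading of unstable segments'' that is the engine of Misiurewicz's proof that $L|_\Lambda$ is mixing \cite{M}. We may assume $I$ is nondegenerate. Fix a countable base $\{U_k\}_{k\in\N}$ for the topology of $\Lambda$ and set
$$G_k=\{c\in I: L^n(c)\in U_k\ \text{for some}\ n\in\N\}=I\cap\bigcup_{n\ge 0}L^{-n}(U_k).$$
Since $L$ is a homeomorphism of the plane preserving $\Lambda$, each $L^{-n}(U_k)$ is open in $\Lambda$, so each $G_k$ is relatively open in $I$; and every $c\in\bigcap_k G_k$ has forward orbit meeting each basic open set, hence dense in $\Lambda$. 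As $I$ is a nondegenerate arc, and so a Baire space, it suffices to prove that each $G_k$ is dense in $I$; equivalently, that for every nondegenerate subarc $J\subseteq I$ and every nonempty open $U\subseteq\Lambda$ there is an $n$ with $L^n(J)\cap U\ne\emptyset$.

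The remaining point is purely geometric and I would deduce it from \cite{M}. Two facts are needed. First, along $W^u_X$ the map $L$ expands arc length by a uniform factor $\lambda>1$: the tangent directions of $W^u_X$ lie in the invariant unstable cone field of \cite{M}, on which $DL$ is uniformly expanding, so $\length(L^n(J))\to\infty$. Second --- and this is the substance of Misiurewicz's mixing argument --- once an unstable arc is long enough it contains a ``full'' unstable segment crossing $\Delta$ (here one uses $W^u_X\subset\Cl W^u_X=\Lambda\subset\Delta$ and the geometry of the trapping triangle), and the forward iterates of such a full segment cover $\Delta$ in a Markov-like fashion and become $\eps$-dense in $\Lambda$ for every $\eps>0$. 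Combining the two, for each $\eps>0$ there is $N=N(J,\eps)$ such that $L^n(J)$ is $\eps$-dense in $\Lambda$ for all $n\ge N$. Given $U$, choose $x\in\Lambda$ and $\eps>0$ with the relative ball $B(x,\eps)\cap\Lambda\subseteq U$; then for $n\ge N(J,\eps)$ the arc $L^n(J)\subseteq\Lambda$ contains a point at distance $<\eps$ from $x$, hence a point of $U$. This gives density of each $G_k$, and by Baire $\bigcap_k G_k$ is a dense $G_\delta$ in $I$ --- in particular uncountable and nonempty, so ``most'' points $c\in I$ have dense $L$-orbit in $\Lambda$.

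The hard part is the second geometric fact: that the forward iterates of $J$ genuinely fill $\Lambda$ up to any prescribed scale, and do not merely accumulate on a proper subcontinuum or crowd onto ``neighbouring'' unstable leaves. Abstract topological mixing of $L|_\Lambda$ does not by itself deliver this, since $J$ is nowhere dense in $\Lambda$; what is required is the constructive content of \cite{M}, where unstable segments --- not open sets --- are the objects being tracked and their images are controlled combinatorially. Isolating a clean statement to that effect from \cite{M} is where the real work lies; once it is in hand, the Baire argument above finishes the proof.
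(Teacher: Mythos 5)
Your argument is essentially the paper's: both reduce the lemma to the single dynamical fact that for every segment $J\subset W^u_X$ and every nonempty open $U\subset\Lambda$ some iterate $L^n(J)$ meets $U$ (the paper extracts exactly this ``clean statement'' you ask for from the proof of Theorem 5 in \cite{M}, even with $L^n(J)\cap U\neq\emptyset$ for all large $n$), and both then finish with a Baire category argument along the arc --- the paper phrases it as a contradiction adapting \cite[Proposition 1.1]{Silverman}, you phrase it directly via the dense open sets $G_k$, which is the same argument. So your proposal is correct and takes essentially the same route.
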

	\begin{proof}
		Let $V$ be an open subset of $\Lambda$ and fix a segment $I\subset W^u_X$. By \cite[proof of Theorem 5]{M} 
		\begin{equation}\label{eqn:N}
			\textrm{there exists an } N \in \N \textrm{ such that } L^n(I)\cap V\neq\emptyset \textrm{ for all } n \geq N. 
		\end{equation}
		Indeed, by \cite[proof of Theorem 5]{M} there exists an open set $V_1\subset V$ and integers $k_1,n_1$ such that $L^{n}(I)\cap V_1\neq\emptyset$ for all $n\geq n_1+k_1$, so it is enough to set $N=n_1+k_1$. 
		
		Now we adapt proof of \cite[Proposition 1.1]{Silverman}. Let $\mathcal{V}=\{V_n\}_{n=1}^\infty$ be a countable base of open sets for $\Lambda$. By contradiction suppose that a segment $J\subset W^u_X$ contains no point whose forward orbit is dense in $\Lambda$. Given $x\in J$ there exists a $V_{n_x}\in\mathcal{V}$ such that $L^k(x)\notin V_{n_x}$ for all $k\geq 0$. The set $\bigcup_{k=0}^\infty L^{-k}(V_{n_x})$ is open, and by (\ref{eqn:N}) for every segment $I\subset W^u_X$ there exists a positive integer $k$ such that $I\cap L^{-k}(V_{n_x})\neq\emptyset$, so $J\cap \left(\bigcup_{k=0}^\infty L^{-k}(V_{n_x})\right)$ is also dense in $J$. Consequently the set $A_{n_x}=J\setminus \left(\bigcup_{k=0}^\infty L^{-k}(V_{n_x})\right)$ is closed and nowhere dense in $J$, and contains $x$. Therefore $J=\bigcup_{x\in I}A_{n_x}$ and so $J$ is a countable union of nowhere dense subsets of $J$, a contradiction. 
	\end{proof}
	Since it is also well known that there is a dense orbit in H\'enon strange attractors contained in the unstable manifold of the fixed point, whose closure is the attractor we get the following (cf. Theorem in \cite{B}).
	\begin{theorem}
		For $(a,b)\in\mathcal{M}$ the strange attractor of the the Lozi map $L_{a,b}$ is indecomposable. The same holds for H\'enon map $H_{a,b}$ if $(a, b) \in \WY$.
	\end{theorem}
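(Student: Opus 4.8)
The plan is to verify the three hypotheses of the criterion lemma just above, first for the homeomorphism $L_{a,b}$ with fixed point $X$ (taking $\Lambda$ for $\Cl(W^u_X)$), and then, with only notational changes, for $H_{a,b}$ with fixed point $P$ (taking $\Lambda'$ for $\Cl(W^u_P)$).

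For the Lozi case, hypothesis (1) splits into two standard observations: $W^u_X$ is the continuous one-to-one image of $\mathbb{R}$ under the broken-line parametrization $\varphi$ of the unstable manifold constructed in \cite{M}, and it is bounded because $W^u_X \subseteq \Cl(W^u_X) = \Lambda \subseteq \Delta$ with $\Delta$ a bounded triangle. Hypothesis (3) is immediate from the lemma just proved: choosing any segment $I \subseteq W^u_X$ (for instance $I = [X,Z]$) we obtain a point $c \in I \subseteq W^u_X$ whose forward orbit $\{L_{a,b}^n(c)\}_{n\ge 0}$ is dense in $\Lambda$, and since $W^u_X \subseteq \Lambda$ this orbit is in particular dense in $W^u_X$. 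The only hypothesis needing a short separate argument is (2), that $\Lambda$ does not separate the plane. Here I would argue directly from $\Lambda = \bigcap_{n=0}^\infty L_{a,b}^n(\Delta)$: since $L_{a,b}$ is a homeomorphism of $\mathbb{R}^2$, each $\mathbb{R}^2 \setminus L_{a,b}^n(\Delta) = L_{a,b}^n(\mathbb{R}^2 \setminus \Delta)$ is connected (a homeomorphic image of the connected set $\mathbb{R}^2 \setminus \Delta$), and as $L_{a,b}(\Delta) \subseteq \Delta$ these open sets increase with $n$; hence their union $\mathbb{R}^2 \setminus \Lambda$ is connected, i.e.\ $\Lambda$ does not separate the plane. (Alternatively, by Theorem \ref{thm:conjugacy} the continuum $\Lambda$ is homeomorphic to $\invlim(\T, f)$ with $\T$ a dendrite, hence is tree-like, and a planar tree-like continuum is non-separating --- the easy direction of the fact quoted from \cite{FM} in the criterion lemma.) With (1)--(3) verified, the criterion lemma gives that $\Lambda$ is indecomposable.

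For the H\'enon case the argument is identical after replacing $X$ by $P$, $L_{a,b}$ by $H_{a,b}$, $\Delta$ by $D$ and $\Lambda$ by $\Lambda'$. Hypothesis (1) holds because $W^u_P$ is an injectively immersed copy of $\mathbb{R}$ by invariant manifold theory, and is bounded since $W^u_P \subseteq \Lambda' \subseteq D$; hypothesis (2) holds because $\Lambda'$ is the maximal invariant set $\bigcap_{n\ge 0} H_{a,b}^n(D)$, a nested intersection of homeomorphic images of the closed disk $D$ with connected complement (equivalently, $\Lambda' \cong \invlim(\T', h)$ with $\T'$ a dendrite by Theorem \ref{cor:conjugacy}), so exactly as above it does not separate the plane; and hypothesis (3) holds because it is well known that the Benedicks--Carleson attractor carries a dense orbit lying in $W^u_P$ (see \cite{B}; alternatively one runs the Baire-category argument of the preceding lemma, using topological transitivity of $H_{a,b}|_{\Lambda'}$ from \cite{BK} together with $\Lambda' = \Cl W^u_P$), and such an orbit is a fortiori dense in $W^u_P$.

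I do not expect a genuine obstacle here: the statement is essentially a bookkeeping corollary of the two lemmas preceding it. The only point deserving care is hypothesis (2) of the criterion --- converting the description of $\Lambda$ (resp.\ $\Lambda'$) as a decreasing intersection of topological disks, or as an inverse limit of a dendrite, into non-separation of the actual planar embedding; this is elementary but should be spelled out rather than merely asserted.
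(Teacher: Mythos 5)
Your proposal is correct and follows essentially the same route as the paper: the theorem is obtained by feeding the dense-orbit lemma (for Lozi, and the analogous well-known fact for the Benedicks--Carleson H\'enon attractors) into the indecomposability criterion lemma. The only difference is that you spell out hypotheses (1) and (2) of the criterion --- in particular non-separation of the plane via the nested intersection of the trapping region (or tree-likeness of the inverse limit of a dendrite) --- which the paper leaves implicit as known facts; this is a harmless and correct elaboration, not a different argument.
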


	\section{Acknowledgments}
	We are grateful to Sylvain Crovisier for his suggestion to study \cite{CP} in the context of Williams' result, encouragement and comments that helped to improve this paper. We are also indebted to Phil Boyland and Jernej \v Cin\v c for their helpful feedback on the first draft of this paper, as well as an anonymous referee for suggesting to highlight Lozi-like maps in the statements of results and proofs.

	\medskip
	\noindent
	J. P. Boro\'nski\\	
	Department of Differential Equations\\
	Faculty of Mathematics and Computer Science\\
	Jagiellonian University in Kraków\\
	ul. Łojasiewicza 6, 30-348 Kraków, Poland\\
	-- and --\\
	National Supercomputing Centre IT4Innovations\\ 
	IRAFM, University of Ostrava\\
	30. dubna 22, 70103 Ostrava, Czech Republic\\
	\texttt{jan.boronski@uj.edu.pl}\\
	
	\medskip
	\noindent
	Sonja \v Stimac\\
	Department of Mathematics\\
	Faculty of Science, University of Zagreb\\
	Bijeni\v cka 30, 10\,000 Zagreb, Croatia\\
	\texttt{sonja@math.hr}\\
	\texttt{http://www.math.hr/}$\sim$\texttt{sonja}
	

\begin{thebibliography}{99}
		
		\bibitem{BaV} D.\ Baptista, S.\ Vinagre, \emph{The basin of attraction of Lozi mappings}, 
		International Journal of Bifurcation and Chaos {\bf 19} (2009), 1043-1049.
		
		\bibitem{B} M.\ Barge, \emph{Homoclinic intersections and indecomposability}, Proc.\ Amer.\ Math.\ Soc.\  
		{\bf 101} (1987), 541–544.
		
		
		\bibitem{B2} M.\ Barge, S. Holte, \emph{Nearly one-dimensional H\'enon attractors and inverse limits}, 
		Nonlinearity {\bf 8} (1995), 29-42.
		
		
		
		\bibitem{BK} M.\ Benedicks, L.A.E.\ Carleson,  
		\emph{The dynamics of the H\'enon map}, Annals of Mathematics {\bf 133} (1991), 73--169.
		
		\bibitem{BV} M.\ Benedicks, M.\ Viana, \emph{Solution of the basin problem for Hénon-like attractors}, Inventiones Mathematicae, {\bf 143} (2001), 375-434.
		
		\bibitem{Berger} P. Berger, {\it Properties of the maximal entropy measure and geometry of H\'enon attractors.} J. Eur. Math. Soc. {\bf 21} (2019) 2233--2299.
		
		
		
		
		\bibitem{BiTy} J. Bishop, J.T. Tyson, {\it Conformal dimension of the antenna set.} Proc. Amer. Math.Soc. {\bf 129} (2001), 3631--3636
		
		\bibitem{BM} M.\ Bonk, D.\ Meyer, \emph{Uniformly branching trees}, preprint 2020, arXiv:2004.07912v1
		
		\bibitem{BT} M.\ Bonk, H.\ Tran, \emph{The continuum self-similar tree}, preprint 2020,	arXiv:1803.09694.
		
		\bibitem{BS2} J.\ Boro\'nski, S.\ \v Stimac, \emph{The pruning front conjecture, and a classification of Hénon maps in the presence of strange attractors}, preprint 2023,	arXiv:2302.12568.
		
		\bibitem{BCH}  P. Boyland, A. de Carvalho, T. Hall {\em Inverse limits as attractors in parameterized families} Bull. Lond. Math. Soc. {\bf 45} (2013), 1075--1085.
		
		\bibitem{BCH2} P. Boyland, A. de Carvalho, T. Hall {\em New Rotation Sets in a Family of Torus Homeomorphisms} Inv. Math. {\bf 204} (2016), 895--937.
		
		\bibitem{BCH3} P. Boyland, A. de Carvalho, T. Hall {\em Natural extensions of unimodal maps: virtual sphere homeomorphisms and prime ends of basin boundaries.} Geom. Topol. {\bf 25} (2021), 111--228.
		
		\bibitem{BCH4}  P. Boyland, A. de Carvalho, T. Hall {\em Statistical stability for Barge-Martin attractors derived from tent maps.} Discrete Contin. Dyn. Syst. {\bf 40} (2020), no. 5, 2903--2915.
		
		\bibitem{BCH5}  P. Boyland, A. de Carvalho, T. Hall {\em Typical path components in tent map inverse limits.} Fund. Math. {\bf 250} (2020), no. 3, 301--318.
		
		\bibitem{BS}
		M.\ Brin, G.\ Stuck, \emph{Introduction to Dynamical Systems}, Cambridge University Press, Cambridge, 2002.
		
		\bibitem{C} Y.\ Cao, \emph{The transversal homoclinic points are dense in the codimension-1 Henon-like strange
			attractors}, Proc.\ Amer.\ Math.\ Soc.\   {\bf 127} (1999),  1877-1883. 
		
		\bibitem{CP} S.\ Crovisier, E.\ Pujals, \emph{Strongly dissipative surface diffeomorphisms},
		Commentarii Mathematici Helvetici {\bf 93} (2018), 377–400. 
		
		\bibitem{CPT} S.\ Crovisier, E.\ Pujals, C.\ Tresser, \emph{Mildly dissipative diffeomorphisms of the disk with zero entropy}, preprint 2020, arXiv:2005.14278.
		
		\bibitem{FM} J.B.\ Fugate, L.\ Mohler, \emph{The fixed point property for tree-like continua with finitely many arc components}, Pacific J.\ Math.\  {\bf 57} (1975), 393–402.
		
		\bibitem{H} M.\ H\'enon, \emph{A two-dimensional mapping with a strange attractor},
		Communications in Mathematical Physics, {\bf 50}, no.\ 1 (1976), 69-77.
		
		\bibitem{HY} J.G.\ Hocking, G.S.\ Young, \emph{Topology}, Addison-Wesley Publishing Co., Inc., Reading, Mass.-London 1961 ix+374 pp.
		
		
		
		\bibitem{I} Y.\ Ishii, \emph{Towards a kneading theory for Lozi mappings. I. A solution of the pruning front conjecture and the first tangency problem}, Nonlinearity {\bf 10} (1997), no. 3, 731-747. 
		
		\bibitem{I2} Y.\ Ishii, \emph{Towards a kneading theory for Lozi mappings. II. Monotonicity of the topological entropy and Hausdorff dimension of attractors}, Comm.\ Math.\ Phys.\ {\bf 190} (1997), no.\ 2, 375-394.
		
		\bibitem{IS} Y.\ Ishii, D.\ Sands, \emph{Monotonicity of the Lozi family near the tent-maps}, Comm.\ Math.\ Phys.\ {\bf 198} (1998), no.\ 2, 397-406.
		
		\bibitem{Kig} J. Kigami {\em Analysis on Fractals.} Vol. 143 of Cambridge Tracts in Mathematics, Cambridge University Press, Cambridge, 2001
		
		\bibitem{L}
		R.\ Lozi, \emph{Un attracteur etrange(?) du type attracteur de H\'enon}, J.\ Physique (Paris) {\bf 39} (Coll. C5) (1978), 9--10.
		
		
		\bibitem{M}
		M.\ Misiurewicz, \emph{Strange attractor for the Lozi mappings}, Ann.\ New York Acad.\ Sci.\ {\bf 357} (1980) (Nonlinear Dynamics), 
		348--358.
		
		\bibitem{MS}
		M.\ Misiurewicz, S.\ \v Stimac, \emph{Symbolic dynamics for Lozi maps}, Nonlinearity {\bf 29} (2016), 3031--3046.
		
		\bibitem{MS2}
		M.\ Misiurewicz, S.\ \v Stimac, \emph{Lozi-like maps}, Discrete and Continuous Dynamical Systems - Series A {\bf 38} (2018), 2965-2985.  
		
		\bibitem{MV}
		L.\ Mora, M.\ Viana, \emph{Abundance of strange attractors}, Acta Math.\ {\bf 171} (1993), 
		1--71.  
		
		
		\bibitem{O} D.-S.\ Ou, \emph{Nonexistence of wandering domains for strongly dissipative infinitely renormalizable Hénon maps at the boundary of chaos}, Invent.\ Math.\ {\bf 219} (2020), no.\ 1, 219-280.
		
		\bibitem{Rohlin} V.  A.  Rohlin.  {\it Lectures  on  the  entropy  theory  of  transformations  with  invariant measure.} Uspehi Mat. Nauk, 22 (1967), 3--56.
		
		\bibitem{Silverman} S.\ Silverman, \emph{On maps with dense orbits and the definition of chaos}, Rocky Mountain J.\ Math.\ {\bf  22} (1992), 353-375.
		
		\bibitem{S}  S.\ van Strien, \emph{One-dimensional dynamics in the new millennium}, Discrete and Continuous Dynamical Systems - Series A {\bf 27} (2010), 557-588.
		
		\bibitem{WY} Q.\ Wang, L.-S.\ Young, \emph{Strange attractors with one direction of instability}, Comm.\ Math.\ Phys.\ {\bf 218} (2001), no.\ 1, 1-97.
		
		\bibitem{Wazewski} T. Wazewski, {\em Sur les courbes de Jordan ne renfermant aucune courbe simple ferme\'e de Jordan.} Ann. Soc. Polonaise Math. {\bf 2} (1923), 49--170.
		
		\bibitem{W}
		R.F.\ Williams, \emph{One-dimensional non-wandering sets}, Topology {\bf 6} (1967), 473-487.  
		
		\bibitem{Y} I.\ B.\ Yildiz, \emph{Monotonicity of the Lozi family and the zero entropy locus}, Nonlinearity {\bf 24} (2011), 1613-1628.
	\end{thebibliography}
\end{document}